\newtheorem{theorem}{Theorem}
\newtheorem*{thm}{Theorem}
\newtheorem{lem}[theorem]{Lemma}
\newtheorem{corollary}{Corollary}
\newtheorem{prop}[theorem]{Proposition}
\newtheorem*{observation*}{Observation}
\theoremstyle{remark}
\theoremstyle{definition}
\newtheorem*{conjecture}{Conjecture}
\newcommand{\F}{{\cal F}}
\numberwithin{equation}{section}
\newcommand{\anc}{\boxplus}
\newcommand{\sq}{~\square~}
\begin{document}

 \title{Long path and cycle decompositions of even hypercubes}
\author{
 Maria Axenovich\thanks{Karlsruhe Institute of Technology, Karlsruhe, Germany, \texttt{maria.aksenovich@kit.edu}.} 
 \and David Offner\thanks{Carnegie Mellon University, Pittsburgh, PA, USA, \texttt{doffner@andrew.cmu.edu}.}
 \and Casey Tompkins\thanks{Discrete Mathematics Group, Institute for Basic Science (IBS), Daejeon, Republic of Korea, \texttt{ctompkins496@gmail.com}.}
 }
\date{\today}

\maketitle

\begin{abstract}
We consider edge decompositions of the $n$-dimensional hypercube $Q_n$ into isomorphic copies of a given graph $H$.  While a number of results are known about decomposing $Q_n$ into graphs from various classes, the simplest cases of paths and cycles of a given length are far from being understood.  A conjecture of Erde asserts that if $n$ is even, $\ell < 2^n$ and $\ell$ divides the number of edges of $Q_n$,  then the path of length $\ell$ decomposes $Q_n$.
Tapadia et al.\ proved that any path of length $2^mn$,  where $2^m<n$, satisfying these conditions decomposes $Q_n$.
Here, we make progress toward resolving Erde's conjecture by showing that cycles of certain lengths up to $2^{n+1}/n$ decompose $Q_n$.  As a consequence, we show that $Q_n$ can be decomposed into copies of any path of length at most $2^{n}/n$  dividing  the number of edges of $Q_n$, thereby settling Erde's conjecture up to a linear factor.  
\end{abstract}

\section{Introduction}
For any graph $G$, we denote its vertex set by $V(G)$ and its edge set by $E(G)$.
The \textit{$n$-dimensional hypercube} $Q_n$ is the graph with $V(Q_n) = \{0,1\}^n$ and edges between pairs of vertices that differ in exactly one coordinate.
Given graphs $G$ and $H$, we say that $H$ \textit{decomposes} $G$ if $G$ is a pairwise edge-disjoint union of isomorphic copies of $H$. For any  fixed graph $H$ which is a subgraph of some hypercube, Offner \cite{offner1} showed that $H$ almost decomposes any $Q_n$ for sufficiently large $n$. More precisely, a subgraph of $Q_n$ with all but at most $o(|E(Q_n)|)$ edges of $Q_n$ 
is a pairwise edge-disjoint union of isomorphic copies of $H$.  It was shown implicitly by Aubert and Schneider~\cite{aubert}  that when $n$ is even $Q_n$ has a decomposition into Hamiltonian cycles, see also Aspach et al. \cite{alspach} for an explicit statement. While some other research on graph decompositions allows paths and cycles of different lengths, for example \cite{GKO16}, we focus on decompositions of hypercubes into cycles and paths of given length.  

If $n$ is odd then each vertex of $Q_n$ has odd degree and hence must be an endpoint of some path in a path decomposition. This implies that there are at least $2^{n-1}$ paths in such a decomposition and the length of each such path is at most $|E(Q_n)|/2^{n-1} = n2^{n-1}/2^{n-1} = n$.  In fact, Anick and Ramras~\cite{AR} as well as Erde \cite{erde} proved that for odd $n$, $Q_n$ can be decomposed into copies of any path of length at most $n$ and dividing the number of edges in $Q_n$.  While for odd $n$, we can only hope for path decompositions into short paths, and the problem has been fully resolved, when $n$ is even, Erde   formulated the following strong conjecture that implies that there are path decompositions of hypercubes into long paths.

\begin{conjecture}[Erde \cite{erde}] %\label{Erde}
 If $n$ is even, $\ell < 2^n$, and   $\ell$ divides the number of edges of $Q_n$,  then the path of length $\ell$ decomposes $Q_n$. 
\end{conjecture}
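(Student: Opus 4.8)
The plan is to reduce the path-decomposition problem to a cycle-decomposition problem and then to build the required cycle decompositions from Hamiltonian decompositions of subcubes via the Cartesian product structure. Write $\ell = 2^a m$ with $m$ odd. Because $n$ is even, every vertex of $Q_n$ has even degree $n$, so $Q_n$ is Eulerian; note, however, that chopping an Euler circuit into blocks of $\ell$ consecutive edges only yields closed \emph{trails}, not necessarily simple paths, so this naive idea fails. Instead I would aim to decompose $Q_n$ into \emph{simple} cycles each of length a multiple of $\ell$: since every sub-arc of a simple cycle is itself a simple path, a decomposition into simple cycles of length $2\ell$ cuts into two paths of length $\ell$ each, and more generally cycles of length $k\ell$ with $k \ge 2$ cut into $k$ paths of length $\ell$. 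This reduces the conjecture, at least for $\ell \le 2^{n-1}$, to producing a decomposition of $Q_n$ into simple cycles whose lengths are multiples of $\ell$ and at least $2\ell$.

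First I would dispose of the easy factor. If $m=1$, i.e.\ $\ell = 2^a$ with $a \le n-1$, I invoke the Aubert--Schneider/Alspach Hamiltonian decomposition: $Q_n$ splits into $n/2$ Hamiltonian cycles of length $2^n$, and since $\ell \mid 2^n$ each such cycle cuts evenly into $2^{n-a}$ simple paths of length $\ell$. Thus all power-of-two lengths are immediate (and the result of Tapadia et al.\ already covers the related lengths $2^m n$), so the entire difficulty lies in realizing the odd part $m>1$.

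To handle odd $m$ I would exploit the product decomposition $Q_n = Q_s \square Q_{n-s}$. The intended building block is that a simple cycle living in one factor, combined with a controlled traversal of the complementary directions, produces a simple cycle in $Q_n$ of prescribed length; translating such a cycle by the coordinate flips of $\mathbb{Z}_2^n$ then tiles $Q_n$ by pairwise edge-disjoint congruent copies. Concretely, one wants a short cycle carrying the odd factor $m$ inside a subcube $Q_s$ with $2^s$ just above $\ell$, and then to inflate it across the remaining $n-s$ directions so that the Hamiltonian strands are cut consistently. Verifying simplicity and congruence of all pieces and matching the divisibility $\ell \mid n2^{n-1}$ against the number and length of the cycles is the bulk of the delicate bookkeeping; this is precisely the regime the cycle constructions reach, yielding cycles up to length $2^{n+1}/n$ and hence paths up to length $2^n/n$.

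The main obstacle is pushing $\ell$ from the $2^n/n$ range up to the full $2^n$ demanded by the conjecture, and here two genuinely different difficulties arise. For $2^{n-1} < \ell < 2^n$ the cycle reduction collapses entirely: a simple cycle of length $2\ell > 2^n$ cannot exist in $Q_n$, and no larger multiple $k\ell$ fits either, so paths of these lengths must be constructed directly as near-spanning simple paths of exactly the prescribed length and then tiled, with the no-repeated-vertex constraint being the crux. For $\ell \le 2^{n-1}$ but $\ell$ well above $2^n/n$, the difficulty is instead that decomposing $Q_n$ into simple cycles of a single large even length grows increasingly rigid as the cycles lengthen and their number $n2^{n-1}/(2\ell)$ shrinks, so the translation-and-tiling argument loses the slack it relies on. I expect that closing this linear gap requires a genuinely new idea---most plausibly a flexible scheme for merging several Hamiltonian cycles and re-cutting them into equal long pieces---rather than a refinement of the product method, which is why the conjecture remains open.
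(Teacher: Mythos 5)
You were asked to prove a statement that is in fact an open conjecture: the paper itself does not prove it, and says so explicitly---its main results settle Erde's conjecture only ``up to a linear factor.'' Accordingly, your proposal is not, and candidly does not claim to be, a proof: your final paragraph concedes that the range $2^n/n < \ell < 2^n$ is out of reach and that the conjecture remains open. That is the only honest verdict available here, so there is no complete argument to check against the paper, for the paper has none either.

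What you do sketch is essentially the paper's own route to its partial results. Your reduction (cut each simple cycle of length $k\ell$, $k \ge 2$, into $k$ paths of length $\ell$) is exactly how Theorem~\ref{paths} is deduced from Corollary~\ref{upshot}; your treatment of $\ell = 2^a$ via the Aubert--Schneider Hamiltonian decomposition is the paper's opening observation that $Q_n$ decomposes into paths of length $2^i$ for $0 \le i \le n-1$; and your plan to carry the odd factor of $\ell$ through the product structure $Q_n = Q_s \sq Q_{n-s}$ is the skeleton of the proofs of Lemma~\ref{m1m2GENERAL} and Theorem~\ref{main}. The step you defer as ``delicate bookkeeping''---inflating a cycle from one factor across the complementary directions so that all resulting pieces are pairwise edge-disjoint simple cycles of a single length---is precisely where the paper's genuinely new machinery lives: anchored products and subdivided tori, the $k$-wiggle decomposition and its length-equalizing condition (Proposition~\ref{2even}), and the splittable/DR-splittable representing sets that make the induction through Lemma~\ref{p7gen} work. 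Without a substitute for that machinery, your middle paragraph does not yet yield even the $\ell \le 2^n/n$ range: naive translation of one cycle by coordinate flips of $\mathbb{Z}_2^n$ does not by itself produce edge-disjoint copies, which is exactly why the paper needs representing sets to control how cycles coming from the two factors interact inside each subdivided torus. Your closing diagnosis of the two obstructions is sound and matches the paper's own discussion: for $\ell > 2^{n-1}$ no cycle of length $2\ell$ fits in $Q_n$ (compare the paper's remark that $Q_6$ has no cycle long enough to be cut into paths of length $48$), and for $\ell$ between $2^n/n$ and $2^{n-1}$ the construction loses the slack it relies on, which is precisely the gap the paper leaves open.
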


For even $n$, we know that $Q_n$ is decomposable into Hamiltonian cycles, so by dividing each cycle into paths of equal length, we see that $Q_n$ is decomposable into paths of length $2^{i}$ for $0 \le i \le n-1$. Tapadia et al.\ \cite{TWB} proved that for even $n$ and $m$ such that $2^m<n$, $Q_n$ is decomposable into paths of length at most $2^mn$. Erde noticed that if $n$ is even and $y$ is an odd divisor of $n$, then $Q_n$ is decomposable into paths of length $y2^{{n/y} -1}$.

Here, we prove that there are  cycle decompositions of hypercubes of even dimension into long cycles, from which it follows that there are decompositions of such hypercubes into long paths. The best known result on cycle decompositions is by Tapadia et al.\ \cite{TWB}  (see also  Horak et al. \cite{HSW}) which gives cycle decompositions of $Q_n$ into cycles of length at most $n^2$. 

\begin{thm}[Tapadia et al.\ \cite{TWB}]
Let $n$ and $m$ be integers where  $n$ is positive and even and $m$ is nonnegative,  such that 
 $ 2^{m}\leq n$. Then a cycle of length $2^{m}n$ decomposes $Q_n$.
\end{thm}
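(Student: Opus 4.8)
The plan is to exhibit, for each admissible pair $(n,m)$, a single cycle $C_0$ of length $2^m n$ in $Q_n$ together with a group $U$ of automorphisms of $Q_n$ such that the images of $C_0$ under $U$ partition $E(Q_n)$. Before constructing $C_0$ I would record two guiding facts. First, since $|E(Q_n)| = n2^{n-1}$, any decomposition into cycles of length $2^m n$ must consist of exactly $2^{n-1-m}$ cycles, which is an integer because $2^m \le n \le 2^{n-1}$. Second, for $m \ge 1$ the length $2^m n$ is evenly distributed among the $n$ edge-directions: a \emph{balanced} cycle using each direction exactly $2^m$ times has precisely this length. These observations suggest a highly symmetric construction rather than cutting a Hamiltonian decomposition. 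Indeed, cutting the $n/2$ Hamiltonian cycles available for even $n$ into equal arcs works immediately when $n$ is a power of two, since then $2^m n$ divides $2^n$, and I would dispatch that case first as a warm-up.

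For the main case $m \ge 1$, I would take $U$ to be a subgroup of the translation group $\mathbb{Z}_2^n$ of $Q_n$ of order $2^{n-1-m}$ and seek a balanced cycle $C_0$ whose $U$-translates tile the edge set. The arithmetic is encouraging: in each of the $n$ directions $C_0$ uses $2^m$ edges, and the translates of those edges must cover all $2^{n-1}$ edges of that direction, which is exactly consistent since $2^m \cdot |U| = 2^m \cdot 2^{n-1-m} = 2^{n-1}$. Concretely, I would single out two coordinates, build $C_0$ as a Gray-code ``snake'' that sweeps through $2^m$-sized blocks in these two coordinates while stepping once through each remaining direction, and choose $U$ to be the coordinate subspace spanned by (most of) the remaining basis vectors, so that in each direction the edges of $C_0$ form a transversal of the $U$-cosets. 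Verifying that the translates are simultaneously edge-disjoint and covering then reduces to checking, direction by direction, that $C_0$ meets each $U$-coset of direction-$i$ edges exactly once.

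The case $m = 0$, namely decomposing $Q_n$ into $n$-cycles, must be treated separately, since a cycle of length $n$ cannot use every direction an even number of times and so cannot be balanced. Here I would enlarge the acting group to include a cyclic permutation $\sigma$ of the coordinates: choose $C_0$ to be an $n$-cycle supported on a proper subset of the directions, and let the group generated by $\sigma$ together with an appropriate translation subgroup carry $C_0$ onto a partition of $E(Q_n)$, using the rotation $\sigma$ to cover the directions missed by any single translate.

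The main obstacle is the explicit construction of the tiling cycle $C_0$ when $n$ is not a power of two, where no Hamiltonian cycle of $Q_n$ has length divisible by $2^m n$ and the clean ``cut a Hamiltonian cycle'' strategy fails outright. Producing a single cycle that meets each $U$-coset in each direction exactly once, equivalently a cycle realizing a perfect difference-tiling pattern for $U$, is the delicate combinatorial core; once such a $C_0$ is in hand, edge-disjointness and covering follow from the coset bookkeeping above. I expect the construction to be recursive in $n$, with the base cases ($n$ small, or $m$ close to $\log_2 n$) built by hand, and with the $m=0$ rotation argument requiring a separate consistency check to ensure that $\sigma$ and the translations generate a group acting freely on the copies of $C_0$.
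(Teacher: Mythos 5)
First, a point of reference: the paper you are working against does not prove this statement at all --- it is quoted as background from Tapadia, Waphare and Borse, and their proof (like the new machinery in the present paper, namely wiggle decompositions of tori and subdivided tori) works by writing $Q_n$ for even $n$ as a Cartesian product of $n/2$ four-cycles and decomposing products of cycles inductively. Your plan --- tile $E(Q_n)$ by the translates of one ``balanced'' cycle $C_0$ under a subgroup $U \le \mathbb{Z}_2^n$ --- is a genuinely different route in spirit, but as written it is not a proof, and two of its concrete steps fail.

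The clearest failure is the choice of $U$ as a coordinate subspace. For a translation $u$ and a direction-$i$ edge $\epsilon = \{x, x+e_i\}$ one has $\epsilon + u = \epsilon$ exactly when $u \in \{0, e_i\}$. Hence if $e_i \in U$, the translate $C_0 + e_i$ contains every direction-$i$ edge of $C_0$; so either $C_0 + e_i = C_0$, in which case the family $\{C_0+u : u \in U\}$ has fewer than $|U|$ distinct members and cannot cover all $n2^{n-1}$ edges by counting, or $C_0 + e_i \neq C_0$ and two members of the family share an edge. Equivalently, when $e_i \in U$ the direction-$i$ edges fall into $2^{m+1}$ orbits of size $|U|/2$, while a balanced $C_0$ has only $2^m$ edges in that direction, so the ``transversal of $U$-cosets'' condition you impose is unsatisfiable. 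Since a balanced $C_0$ must use every direction, $U$ must contain no weight-one vector --- precisely what your choice of ``the coordinate subspace spanned by (most of) the remaining basis vectors'' violates. Second, your warm-up for $n$ a power of two is wrong: cutting the Hamiltonian cycles of a Hamiltonian decomposition into equal arcs yields a decomposition into \emph{paths} of length $2^m n$, not cycles, so that case is not dispatched. Finally, even after repairing $U$ (e.g., taking a subgroup of the even-weight code with minimum weight at least $2$), the entire content of the theorem --- exhibiting one balanced cycle of length $2^m n$ meeting each $U$-orbit of each direction class exactly once, plus the separate rotation-based construction for $m=0$ --- is exactly what you defer to ``base cases built by hand'' and an unspecified recursion. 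That transversal cycle is the hard combinatorial core; without it, what you have is a consistency check on the arithmetic (which is indeed consistent), not a proof.
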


Note that the number of edges in $Q_n$ is $n2^{n-1}$. So  for even $n$, if there is a cycle decomposition of $Q_n$ into cycles of length $\ell$, then $\ell$ must divide $n2^{n-1}$, i.e. $\ell = y 2^{m}$, where $y$ is an odd divisor of $n$. We will show that for any odd divisor $y$ of $n$, there is a cycle decomposition of $Q_n$ into cycles of length $y 2^{m}$, where $m$ can take a range of values.

\begin{theorem}\label{main}
Let $n =xy2^\alpha$,  where $x, y \ge 1$ are odd, and $\alpha \ge 1$. 
Suppose $y$ has binary representation $y = 2^{i_1} + 2^{i_2} + \cdots + 2^{i_j}$, where $i_1 > i_2 > \cdots > i_j=0$. Then for any $q$,  $0 \le q \le 
n - i_1 - 2xj$, $Q_n$ has an edge decomposition into $x2^{i_1 + \alpha +j-2+q}$ cycles of length $y 2^{n-i_1-j-q+1}$.
\end{theorem}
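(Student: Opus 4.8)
The plan is to realise $Q_n$ as an iterated Cartesian product and to build the required cycles as ``diagonal'' cycles inside tori obtained from Hamiltonian decompositions of even subcubes. First I would fix a partition of the $n$ coordinates into blocks and write $Q_n = Q_{2d_1}\,\square\,Q_{2d_2}\,\square\cdots\square\,Q_{2d_r}$. Each $Q_{2d_i}$ has even dimension, so by the Aubert--Schneider result (see also Alspach et al.~\cite{alspach}) it decomposes into $d_i$ Hamiltonian cycles, i.e.\ into cycles of length $2^{2d_i}$. When the blocks all have the same dimension $2d$, selecting one Hamiltonian cycle ``diagonally'' from each factor partitions $E(Q_n)$ into $d$ edge-disjoint tori, each isomorphic to $(C_{2^{2d}})^{\square r}$ with $r=n/(2d)$. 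This reduces the theorem to the problem of decomposing a product of (equal) cycles into cycles of the prescribed length, and the unequal-block version gives the flexibility needed for general lengths.

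The second ingredient is a staircase decomposition of a product of cycles. In $(C_m)^{\square r}$ the walk that advances one step in coordinate $1$, then coordinate $2$, and so on cyclically, closes after $rm$ edges into a simple cycle, and its translates partition the $r\,m^{r}$ edges into $m^{r-1}$ cycles of length $rm$; allowing other slopes and unequal side lengths yields cycles whose length is the number of directions times an appropriate least common multiple of the sides. The odd factor of the length is thus controlled by this effective ``number of directions,'' and this is where the binary expansion $y = 2^{i_1}+\cdots+2^{i_j}$ enters: I would build a cycle whose length has odd part exactly $y$ by stitching together $j$ diagonal sub-cycles, one for each binary digit $2^{i_s}$, into a single closed walk so that the $j$ contributions add up to $y$ times a power of two. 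The parameter $q$ is then swept by a splitting/doubling step: because the staircase cycles carry a product structure, a cycle of even length $2L$ in such a torus can be rerouted into two cycles of length $L$, doubling the count and halving the length, which is exactly one increment of $q$.

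The main obstacle I expect is this stitching step: realising the \emph{exact} odd factor $y$ from its binary digits while keeping the whole family a genuine edge decomposition. Producing a single \emph{simple} cycle of length $y\,2^{(\cdot)}$ forces one to glue the $j$ pieces dictated by the bits of $y$ so that the result is connected, visits no vertex twice, and — crucially — so that the totality of such cycles covers every edge of $Q_n$ exactly once. Controlling the interaction of these $j$ gluings across the $x$ parallel strands forced by the odd factor $x$ is what consumes coordinates: each (strand, bit) pair requires a bounded block of dimensions to host its gluing gadget, with an extra alignment of the top bit $2^{i_1}$, and this is precisely the origin of the constraint $q \le n - i_1 - 2xj$ — the doubling may be iterated only as far as the dimensions not already spent on the $2xj$-sized stitching gadgets and the $i_1$ alignment.

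Finally I would close with the bookkeeping. One checks that the construction yields $x2^{\,i_1+\alpha+j-2+q}$ cycles of length $y2^{\,n-i_1-j-q+1}$ and that their total length telescopes to $xy\,2^{\,n+\alpha-1} = (n/2^{\alpha})\,2^{\,n+\alpha-1} = n2^{\,n-1} = |E(Q_n)|$, a routine arithmetic identity. The two invariants to maintain throughout are the simplicity (no repeated vertex) of each stitched staircase cycle and the exact tiling of $E(Q_n)$; once the gluing gadget is verified to preserve both under the diagonal and splitting operations, everything else reduces to elementary counting.
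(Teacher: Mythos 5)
Your first two ingredients are fine: $Q_n$ does decompose into edge-disjoint products of cycles via Aubert--Schneider plus the diagonal product of Hamiltonian decompositions (this is essentially Proposition~\ref{spanning} of the paper), and the unit-slope staircase does decompose $(C_m)^{\square r}$ into $m^{r-1}$ cycles of length $rm$. The gap is that the ``stitching gadget'' carrying the binary digits of $y$ --- which you yourself flag as the main obstacle --- is the entire content of the theorem, and your framework has a structural obstruction that no bounded gadget can overcome. In a product of cycles whose lengths are powers of two, consider a family of translates of a sloped diagonal that takes $a_s$ consecutive steps in direction $s$ per period: if $T$ is the order of the period-advance vector $(a_1,\ldots,a_j)$, there are $\bigl(\prod_i m_i\bigr)/T$ translates, each using $Ta_s$ direction-$s$ edges, while only $\prod_i m_i$ direction-$s$ edges exist; so the family covers each direction-$s$ edge $a_s$ times on average, and it is an edge decomposition only when every $a_s=1$. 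With unit slopes the cycle length is $r\cdot\mathrm{lcm}(m_i)$, a power of two times $r$, so obtaining odd part $y$ forces $r\ge y$ blocks, each of even dimension at least $2$, hence $\mathrm{lcm}(m_i)\le 2^{\,n-2(y-1)}$ and cycle length at most $y2^{\,n-2y+2}$. For $Q_{30}$ with $y=15$ this caps your cycles at length $60$, whereas the theorem at $q=0$ requires length $15\cdot 2^{24}$; the shortfall is exponential in $y$, so it cannot be repaired by gadgets occupying $O(xj)$ coordinates. Your doubling step is also not a legal move on its own: the edge set of a single cycle of length $2L$ is never the disjoint union of two cycles of length $L$, so halving lengths requires jointly re-decomposing many cycles, i.e., yet another missing construction.

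The paper evades this obstruction by never producing the odd factor $y$ geometrically. It instead pins down the \emph{number} of cycles to be $x2^{i_1+\alpha+j-2+q}$ and ensures all cycles have equal length; since $|E(Q_n)|=xy2^{\,n+\alpha-1}$, arithmetic alone then forces the common length to be $y2^{\,n-i_1-j-q+1}$. The real work goes into the equal-length requirement: distance-regular representing sets, anchored products (subdivided tori), and $2$-wiggle decompositions are combined in Lemma~\ref{p7gen}, and Theorem~\ref{main} follows by induction on $j$, peeling off the top digit via $Q_n = Q_{x2^{i_1+\alpha}}\,\square\,Q_{x(2^{i_2}+\cdots+2^{i_j})2^\alpha}$, with Lemma~\ref{m1m2GENERAL} supplying the decompositions of the factor whose dimension is $x$ times a power of two. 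If you want to salvage your approach, the fix is exactly this inversion: fix the count rather than the length, and replace translate families by wiggle-type decompositions, which are designed precisely to tile all rows and columns of a (subdivided) torus with cycles of equal length.
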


As an example, consider $Q_{30}$, where $\alpha = 1$. Letting $x=3$ and $y=5 = 2^2 + 2^0$ gives $i_1 = 2$ and $j = 2$, so $n- i_1 - 2xj = 16$.  Thus we get decompositions of $Q_{30}$ into $x2^{i_1 + \alpha + j - 2 +q} = 3 \cdot 2^{3+q}$
cycles, for $0 \le q \le 16$.  Since $Q_{30}$ has $30\cdot 2^{29}$ edges, the cycle lengths of these decompositions are 
$\{30\cdot 2^{29}/3\cdot 2^n ~ : ~ 3 \le n \le 19\} = \{5\cdot 2^{m} ~ : ~ 11 \le m \le 27\}$. In particular, when $q=0$, we obtain a decomposition of $Q_{30}$ into only  $24$ cycles of length $5\cdot 2^{27}$, whereas the smallest possible number of cycles in a decomposition  of $Q_{30}$ is $15$. See Table~\ref{tablemain} in the appendix for further numerical examples.

 By taking $q=0$  in Theorem~\ref{main}, we prove the following Corollary in Section~\ref{importantproofs}.

\begin{corollary}\label{upshot}
Let $n$ be even and $y$ be an odd divisor of $n$.  Then there is a decomposition of $Q_n$ into cycles of length $\ell$, where $\ell \geq 2^{n+1}/n$ and $\ell=y2^m$ for some $m$. 
\end{corollary}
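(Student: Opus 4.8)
The plan is to obtain Corollary~\ref{upshot} as the special case $q = 0$ of Theorem~\ref{main}. First I would fix the required factorization of $n$. Writing $n = 2^{\alpha} n'$ with $n'$ odd and $\alpha \ge 1$ (possible since $n$ is even), and noting that the odd divisor $y$ is coprime to $2^{\alpha}$ and hence divides $n'$, I set $x = n'/y$, which is odd. This gives $n = x y 2^{\alpha}$ with $x, y \ge 1$ odd and $\alpha \ge 1$, exactly the hypothesis of Theorem~\ref{main}. Fixing the binary expansion $y = 2^{i_1} + \cdots + 2^{i_j}$ with $i_1 > \cdots > i_j = 0$ (the last exponent is $0$ because $y$ is odd), Theorem~\ref{main} at $q = 0$ then produces a decomposition of $Q_n$ into cycles of length $\ell = y\, 2^{\,n - i_1 - j + 1}$, which is of the required form $\ell = y 2^m$.

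Before invoking the theorem I must check that $q = 0$ lies in the admissible range, i.e.\ that $0 \le n - i_1 - 2xj$; this is the one point that genuinely requires an argument, and I expect it to be the main (though modest) obstacle. Since $\alpha \ge 1$ we have $n \ge 2xy$, so it suffices to show $i_1 + 2xj \le 2xy$, equivalently $i_1 \le 2x(y - j)$. Using $x \ge 1$, the bound $j \le i_1 + 1$, and the fact that oddness of $y$ forces $y \ge 2^{i_1} + 1$ when $i_1 \ge 1$ (with $y = 1$, $i_1 = 0$ in the remaining case), a short calculation bounds $2x(y - j)$ below by $i_1$; the only cases needing individual inspection are the very small values of $i_1$, after which monotonicity of $2^{i_1+1}$ against $3i_1$ takes over. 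This confirms the hypotheses of Theorem~\ref{main} are met.

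It then remains to verify the lower bound $\ell \ge 2^{n+1}/n$. Substituting $n = xy2^{\alpha}$ and comparing the two quantities directly,
\[
\frac{\ell}{2^{n+1}/n} = \frac{y\,2^{\,n - i_1 - j + 1}\cdot xy2^{\alpha}}{2^{n+1}} = x\,y^{2}\,2^{\,\alpha - i_1 - j}.
\]
Thus the claim reduces to the clean inequality $x\, y^{2}\, 2^{\alpha} \ge 2^{\,i_1 + j}$. This follows immediately from three elementary facts: $x \ge 1$, $2^{\alpha} \ge 2$, and $y \ge 2^{i_1}$ (so $y^{2} \ge 2^{2 i_1}$), which together give $x y^{2} 2^{\alpha} \ge 2^{\,2 i_1 + 1}$; combined with $j \le i_1 + 1$ (the number of ones in the binary expansion of $y$ is at most $i_1 + 1$), we get $i_1 + j \le 2 i_1 + 1$ and hence $2^{\,i_1 + j} \le 2^{\,2 i_1 + 1} \le x y^{2} 2^{\alpha}$. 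This yields $\ell \ge 2^{n+1}/n$ and completes the proof.
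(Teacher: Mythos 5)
Your proof is correct and takes essentially the same route as the paper: both set $q=0$ in Theorem~\ref{main} and then bound the resulting cycle length $\ell = y2^{n-i_1-j+1}$ from below via an elementary inequality built on $y \ge 2^{i_1}$ and $j \le i_1+1$. Your write-up is in fact slightly more careful on two minor points: you verify that $q=0$ lies in the admissible range $0 \le q \le n-i_1-2xj$, which the paper's proof leaves implicit, and your chain $2^{i_1+j} \le 2^{2i_1+1} \le xy^2 2^{\alpha}$ also covers the case $y=1$, where the paper's intermediate claim $2^{i_1+j} \le y^2$ (justified by ``$i_1$ and $j$ are each at most $\log_2 y$'') breaks down, even though its conclusion still holds there since $\ell = 2^n \ge 2^{n+1}/n$.
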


Note that dividing each cycle in a cycle  decomposition of $Q_n$ into paths of equal length creates a path decomposition of $Q_n$, so in Section~\ref{importantproofs} we prove the following theorem for path decompositions.

\begin{theorem}\label{paths}
Let $n$ be even and  $\ell$ be a positive integer such that  $\ell\leq 2^n/n$ and $\ell$ divides the number of edges in $Q_n$. Then there is a decomposition of $Q_n$ into paths of length $\ell$.
\end{theorem}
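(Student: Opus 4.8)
The plan is to derive Theorem~\ref{paths} as a corollary of the cycle decomposition guaranteed by Theorem~\ref{main}, using the elementary observation already noted in the excerpt: if $Q_n$ decomposes into cycles each of length $L$, and $\ell$ divides $L$, then cutting each cycle into $L/\ell$ consecutive paths of length $\ell$ yields a path decomposition of $Q_n$. So the whole task reduces to showing that, given any admissible path length $\ell$ (one that divides $|E(Q_n)| = n2^{n-1}$ and satisfies $\ell \le 2^n/n$), there is \emph{some} cycle decomposition produced by Theorem~\ref{main} whose common cycle length $L$ is a multiple of $\ell$.

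First I would record the arithmetic constraints on $\ell$. Since $\ell \mid n2^{n-1}$ and $n = xy2^\alpha$ with $x,y$ odd, I can write $\ell = d\cdot 2^s$ where $d$ is odd and $d \mid xy$, and $s \le n-1+\alpha$ (the full $2$-adic valuation of $n2^{n-1}$). The cycle lengths available from Theorem~\ref{main} all have the form $y2^m$ for the \emph{fixed} odd divisor $y$ of $n$ but with $m$ ranging over an interval determined by $q$. To make $\ell \mid L$ I need $d \mid y$ and the power of two in $\ell$ to be at most that in $L$. Here I would exploit the freedom to choose which odd divisor of $n$ to call ``$y$'': given $\ell = d\cdot 2^s$, choose $y$ in Theorem~\ref{main} to be a multiple of $d$ (e.g.\ take $y$ to be the largest odd divisor of $n$, or at least one divisible by $d$, absorbing the remaining odd factor into $x$), so that $d \mid y \mid L/2^m$ is automatic. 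That reduces the divisibility $\ell \mid L$ to the single two-power inequality $s \le v_2(L) = n-i_1-j-q+1$.

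Next I would show that the interval of cycle lengths in Theorem~\ref{main} reaches low enough to cover every admissible $\ell$. By Corollary~\ref{upshot}, taking $q=0$ already gives a decomposition into cycles of length $L \ge 2^{n+1}/n$. Since the hypothesis of Theorem~\ref{paths} is exactly $\ell \le 2^n/n$, we have $L/\ell \ge (2^{n+1}/n)/(2^n/n) = 2$, an integer $\ge 2$ once the divisibility is in place; thus $\ell$ fits at least twice into the long cycle. The key point is that $\ell \le 2^n/n < L$ guarantees $\ell$ is strictly shorter than the cycles, so the cut into paths of length $\ell$ is nontrivial and well-defined. I would verify that with $y$ chosen as above the value $v_2(L) = n-i_1-j+1$ (at $q=0$) is at least the two-power $s$ appearing in $\ell$: because $\ell \le 2^n/n$ forces $s$ to be comfortably below $n$, while $i_1+j-1$ is only $O(\log n)$, so $v_2(L)\ge s$ holds with room to spare. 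Combining $d\mid y$ and $s\le v_2(L)$ gives $\ell \mid L$, completing the reduction.

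The main obstacle I anticipate is purely bookkeeping rather than conceptual: matching the odd part and the two-adic part of an arbitrary admissible $\ell$ simultaneously against the rigid form $y2^m$ of the available cycle lengths, while the odd divisor $y$ is itself a parameter I must fix \emph{before} applying Theorem~\ref{main}. I would handle this by splitting $\ell = d\cdot 2^s$ with $d$ odd, choosing the decomposition $n = x y 2^\alpha$ so that $d \mid y$, and then checking the single inequality $s \le n - i_1 - j + 1$ using $\ell \le 2^n/n$. A secondary subtlety is confirming that $L/\ell$ is a genuine positive integer and that the resulting paths are edge-disjoint and collectively cover $E(Q_n)$ — but this is immediate once $\ell \mid L$, since the cycles partition the edges and each cycle splits cleanly into $L/\ell$ edge-disjoint paths of length $\ell$.
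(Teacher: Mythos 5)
Your overall route is the same as the paper's: apply Theorem~\ref{main} through Corollary~\ref{upshot} (the case $q=0$) to obtain a decomposition into long cycles of common length $\ell'$, then cut each cycle into $\ell'/\ell$ paths of length $\ell$. The gap is in how you match the odd part of $\ell$ to the parameter $y$. You allow $y$ to be \emph{any} odd divisor of $n$ that is a multiple of $d$ (the odd part of $\ell$), with the largest odd divisor of $n$ as your default choice, and you then claim the required two-power inequality $s \le n - i_1 - j + 1$ holds ``with room to spare'' because $i_1+j$ is $O(\log n)$. That verification is unsound: from $\ell = d2^s \le 2^n/n$ you get only $s \le n - \log_2(nd)$, so what you actually need is $i_1 + j - 1 \le \log_2 n + \log_2 d$, and here $i_1,j$ are determined by $y$, not by $d$; when $d$ is much smaller than $y$ this fails. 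Concretely, take $n=30$ and $\ell = 2^{25}$, which is admissible ($2^{25} \le 2^{30}/30$ and $2^{25} \mid 30\cdot 2^{29} = 15\cdot 2^{30}$), so $d=1$ and $s=25$. Choosing $y=15$ (the largest odd divisor) gives $i_1=3$, $j=4$, so at $q=0$ the cycle length is $15\cdot 2^{24}$, and $2^{25} \nmid 15\cdot 2^{24}$; in fact no cycle length $15\cdot 2^{m}$ with $m \le 24$ produced by Theorem~\ref{main} with this $y$ is divisible by $2^{25}$. The only factorization $n = xy2^{\alpha}$ that works for this $\ell$ is $y=1$, $x=15$, which yields cycles of length $2^{30}$.

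The repair is exactly the paper's proof, and it eliminates all estimates: take $y$ to be \emph{precisely} the odd part of $\ell$, i.e., write $\ell = y2^m$ with $y$ odd. Since $\ell \mid n2^{n-1}$, this $y$ is automatically an odd divisor of $n$, so Corollary~\ref{upshot} applies with this $y$ and produces cycles of length $\ell' = y2^{m'} \ge 2^{n+1}/n$. Now $\ell$ and $\ell'$ have the \emph{same} odd part, so the size comparison $\ell \le 2^n/n < 2^{n+1}/n \le \ell'$ forces $2^m < 2^{m'}$, hence $m < m'$ and $\ell \mid \ell'$ immediately --- no logarithmic bookkeeping is needed, and, as the example above shows, no such bookkeeping could rescue the more liberal choices of $y$ that you propose.
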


The rough idea of the proof of Theorem~\ref{main} is as follows.  We represent  $Q_n$  as a Cartesian product of smaller hypercubes. By induction, using the decomposition of the hypercube into Hamiltonian cycles as a base case, we decompose each of the smaller hypercubes into cycles.  We consider the products of these cycles from different copies of the smaller hypercubes. The Cartesian product of two cycles forms a toroidal grid (which we refer to simply as a torus), and in Section~\ref{sectionkwig}  we show how to decompose a torus into several cycles of the same length using what we call a ``wiggle'' decomposition.   Most hypercubes cannot be decomposed into tori, but using the notion of ``representing sets'' of the vertices in a cycle, we show how to decompose the hypercube into graphs which are tori that are subdivided in a nicely structured way.  We then show that our wiggle decomposition can also decompose these subdivided tori into cycles of all the same length.  While previous researchers have considered decompositions of tori into cycles, the use of the subdivided tori and representing sets in this paper are the key to decomposing the hypercube into long cycles.

Theorems~\ref{main} and \ref{paths} show that the hypercube can be decomposed into long cycles and paths, respectively, but our understanding is still incomplete for the longest cycles and paths.  It is still open which cycles of lengths between $2^{n+1}/n$ and $2^n$ can decompose $Q_n$.  For example, by our methods we cannot construct a decomposition of $Q_{14}$ into cycles of length $7 \cdot 2^{11}$.  Even if we could decompose the hypercube into all possible cycles, this would not completely resolve Erde's conjecture, as paths in $Q_n$ of length greater than $2^{n-1}$ cannot be obtained by evenly dividing cycles in a cycle decomposition.  For example, Erde conjectures that $Q_6$ should have a decomposition into paths of length 48, but since $Q_6$ has only 64 vertices, there are no cycles long enough to be divided into paths of length 48.

The rest of the paper is structured as follows. We give more background and historical information on decompositions of hypercubes in Section \ref{background}. In Section~\ref{splittable}, we introduce the wiggle decomposition for decomposing tori and subdivided tori into cycles. We also introduce stronger notions of splittable and DR-splittable decompositions, and show how to produce these types of cycle decompositions of tori and subdivided tori.
In Section \ref{combine} we state several general decomposition results on Cartesian products, and show how given cycle decompositions of graphs $G$ and $G'$ we can produce a cycle decomposition of their Cartesian product with all cycles the same length. We prove Theorem~\ref{main} and Theorem~\ref{paths}, as well as Corollary \ref{upshot}  in Section~\ref{importantproofs}.
We conclude with a refinement of the main result in Section \ref{refinement}.

%%%%%%%%%%%%%%%%%%%%%%%%%%%%%%%%%%%%%%%%%%%%%%
%%%%%%%%%%%%%%%%%%%%%%%%%%%%%%%%%%%%%%%%%%%%%%
%%%%%%%%%%%%%%%%%%%%%%%%%%%%%%%%%%%%%%%%%%%%%%

\section{Background}\label{background}

%%%%%%%%%%%%%%%%%%%%%%%%%%%%%%%%%%%%%%%%%%%%%%
%%%%%%%%%%%%%%%%%%%%%%%%%%%%%%%%%%%%%%%%%%%%%%
%%%%%%%%%%%%%%%%%%%%%%%%%%%%%%%%%%%%%%%%%%%%%%

%For a graph $G=(V, E)$, we denote the order $|V|$ of the graph by $|G|$, and we denote the size $|E|$ of the graph by $||G||$. W
For a graph $G=(V, E)$, we say that a graph $H$ {\it divides} the graph $G$ if the greatest common divisor of the degrees of $H$ divides the greatest common divisor of the degrees of $G$ and $|E(H)|$ divides $|E(G)|$.
%$||H||$ divides $||G||$. 
We call a subgraph of $G$ isomorphic to $H$ a {\it copy } of $H$ in $G$. 
We use $K_n$ to denote a complete graph on $n$ vertices.  A classical theorem of Wilson~\cite{wilson} states that for any graph $H$, if $n$ is sufficiently large and $H$ divides $K_n$ then $H$ decomposes $K_n$.  This result was generalized for subgraphs $G$  of $K_n$ with sufficiently large minimum degree and graphs $H$ dividing $G$, see Keevash \cite{k}, Glock et al.\ \cite{g}, and Kim et al.\ \cite{KKOT19}.  Given Wilson's result on $K_n$, it is natural to consider the analogous problem with other ground graphs, for example a hypercube. 

A graph $H$ is called \textit{cubical} if it is a subgraph of $Q_n$ for some $n$.  
It is clear that only graphs which are  cubical and divide $Q_n$  can decompose $Q_n$.
However, unlike the above results for dense subgraphs of $K_n$, these properties are not sufficient for decomposing $Q_n$, as shown by a counterexample of Bonamy et al.\ \cite{BMS}.

The initial results involving packings and decompositions of the hypercube are due to Stout~\cite{stout} and were motivated by  processor allocation problems.  He introduced both the notion of a vertex packing and an edge packing of the hypercube and proved an asymptotically optimal result for vertex packing.  He showed that for any cubical graph $H$, there are pairwise vertex disjoint copies of $H$ in $Q_n$ containing all but $o(|V(Q_n)|)$ vertices of $Q_n$. 
%  $\frac{\vpack(H,n)\cdot \abs{V(H)}}{2^n}\to 1$ as $n \to \infty$.
Answering a question of Offner, Gruslys~\cite{gruslys} strengthened Stout's result on vertex packing by proving that if the order of $H$  is a power of $2$, then for sufficiently large $n$,  there are pairwise vertex-disjoint copies of $H$ containing all vertices of $Q_n$. In fact, Gruslys's result holds even for the stronger notion of isometric embeddings.

%Let $\epack(H,n)$ denote the maximum number of pairwise edge disjoint copies of $H$ in $Q_n$. 
 Stout~\cite{stout} proved a number of results about edge packing of graphs in $Q_n$. For example, he showed that if $T$ is a tree with $n$ edges, then $T$ decomposes $Q_n$, a result independently proved by Fink~\cite{fink}.  Stout conjectured that for any cubical graph $H$ there are pairwise edge-disjoint copies of $H$ in $Q_n$ containing all but $o(|E(Q_n)|)$ edges of $Q_n$. This conjecture was later proved by Offner~\cite{offner1}.  A fan  with a root vertex  $v$ is a graph which is a union of  cycles of the same length that pairwise  share only $v$. A double-fan  is the graph obtained by joining the root vertices of two vertex disjoint fans by an edge.  In~\cite{roy}, Roy and Kureethara proved several results about decomposing $Q_n$ into fans and double-fans.
Horak et al.\   \cite{HSW} showed that if  $H$ is a cubical graph of size $n$, each block of which is either a cycle or an edge, then  $H$ decomposes $Q_n$. 

A major direction in the decomposition literature concerns Hamiltonian decompositions,  that is decompositions  into Hamiltonian cycles or Hamiltonian cycles and a perfect matching, see for example a survey of Alspach et al.\ \cite{alspach}.  Investigations of Hamiltonian decompositions of $K_n$ were carried out as early as the 1800's by Walecki in~\cite{lucas}.   His constructions showed that $K_n$ has a Hamiltonian decomposition for all $n$ and a decomposition into Hamiltonian paths for even $n$. This result was extended by Auerbach and Laskar~\cite{auerbach}, who showed that complete multipartite graphs with parts of equal size have Hamiltonian decompositions.   
%In 1955 
Ringel \cite{ringel} proved that $Q_n$ has a Hamiltonian decomposition for all integers $n$ which are powers of $2$ and asked whether $Q_n$ has a Hamiltonian decomposition for all even $n$.

Closely relevant to cycle decompositions of $Q_n$ are  Hamiltonian cycle decompositions of the product of cycles. Kotzig \cite{kotzig} proved that the Cartesian product of any two cycles is decomposable into Hamiltonian cycles. This result was extended to products of three cycles by Foregger \cite{foregger}, who in the process gave an alternative proof of Kotzig's result.  Finally,  Aubert and Schneider~\cite{aubert} extended Foregger's result by proving a general theorem which implies that a product of arbitrarily many cycles has a Hamiltonian decomposition.  One consequence of their results is a solution to Ringel's problem of showing that $Q_n$ has a Hamiltonian decomposition when $n$ is even, since $Q_n$ is the Cartesian product of $n/2$ cycles of length 4.
  
An important  open problem for Hamiltonian decompositions is a conjecture of Bermond~\cite{bermond} asserting that the Cartesian product of two graphs, each having a  Hamiltonian decomposition, has a Hamiltonian decomposition. This conjecture has been settled under fairly general conditions by Stong~\cite{strong} but remains open in general. Motivated by problems in parallel computing, Bass and Sudborough \cite{bass} considered decompositions of $Q_n$ into $k$-regular spanning subgraphs.

%%%%%%%%%%%%%%%%%%%%%%%%%%%%%%%%%%%%%%%%%%%%%%
%%%%%%%%%%%%%%%%%%%%%%%%%%%%%%%%%%%%%%%%%%%%%%
%%%%%%%%%%%%%%%%%%%%%%%%%%%%%%%%%%%%%%%%%%%%%%

\section{Definitions and notation}
%%%%%%%%%%%%%%%%%%%%%%%%%%%%%%%%%%%%%%%%%%%%%%
%%%%%%%%%%%%%%%%%%%%%%%%%%%%%%%%%%%%%%%%%%%%%%
%%%%%%%%%%%%%%%%%%%%%%%%%%%%%%%%%%%%%%%%%%%%%%
\subsection{Basic definitions}

For graphs $G$ and $H$, denote by $G \cup H$ the graph with $V(G \cup H) = V(G) \cup V(H)$ and $E(G \cup H) = E(G) \cup E(H)$.
We denote by $G\sq H$ the Cartesian product of $G$ and $H$, i.e., a graph with vertex set $\{(u,v): u\in V(G), v\in V(H)\}$
and edge set $\{(u,v)(u',v'):  u=u', vv'\in E(H) \text{ or } v=v', uu'\in E(G)\}$. We use the notation $(e, v)$ and $(u, e')$ for 
an edge $(u,v)(u',v)$, $e=uu'$ and for an edge $(u,v)(u,v')$, $e'=vv'$, respectively.
In our drawings of $G \sq H$, we represent $V(G\sq H)= V(G) \times V(H)$ as a rectangular grid with copies of $V(G)$ forming columns or subsets of vertical lines and copies of $V(H)$ forming rows or subsets of horizontal lines. 
Then the edges of $G$ are represented vertically, and $H$ horizontally, so we call an edge of the form $(e,v)$ a ``vertical" edge and one of the form $(u, e')$ a ``horizontal" edge.
For a fixed $e\in E(G)$, we call the set of edges $\{(e, v): v\in V(H)\}$ an {\it edge row} or just a {\it row}. 
Similarly, for a fixed $e'\in E(H)$, we call the edges $\{(u, e'): u\in V(G)\}$ an {\it edge column} or just a {\it column}. Note that in our convention the edges in a row are oriented vertically, and those in a column are oriented horizontally. If $G_1, \ldots ,G_k$ are subgraphs of $G$, we say the set of graphs $\{G_1, \ldots ,G_k\}$ forms a \textit{decomposition} of $G$ if $G=G_1 \cup \cdots \cup G_k$ and the subgraphs are pairwise edge-disjoint. We say the decomposition is a \textit{cycle decomposition} if $G_1, \ldots ,G_k$ are all cycles.  In this paper we are interested in cycle decompositions where all of the cycles have the same length.

\subsection{Anchored products of graphs and subdivided tori}\label{anchored}
Given graphs $G$ and $G'$ and $S \subseteq V(G)$, $S' \subseteq V(G')$,  we define the \textit{anchored product} $(G,S) \anc (G',S')$ of the pairs $(G,S)$ and $(G',S')$ to be the graph with the vertex set 
$$\{(u,v) : u \in V(G), v \in V(G'), \text{ and } u \in S \text{ or } v \in S'\}$$
and edge set
$$  \{(u,v)(u',v') : uu' \in E(G), ~v=v' \in S'\} \cup  \{(u,v)(u',v') : u=u' \in S, ~vv' \in E(G')\},$$
\noindent
see Figure~\ref{anchorpic}. Note that if $S=V(G)$ and $S' = V(G')$, the anchored product $(G,S) \anc (G',S')$ is the same graph as the Cartesian product $G \sq G'$.
Alternatively, we see that 
$$(G,S) \anc (G',S') = G\sq G'[ (S\times V(G')) \cup (V(G) \times S)],$$
where for a graph $F$ and a vertex subset $X \subseteq V(F)$, the notation $F[X]$ stands for the subgraph of $F$ induced by $X$.

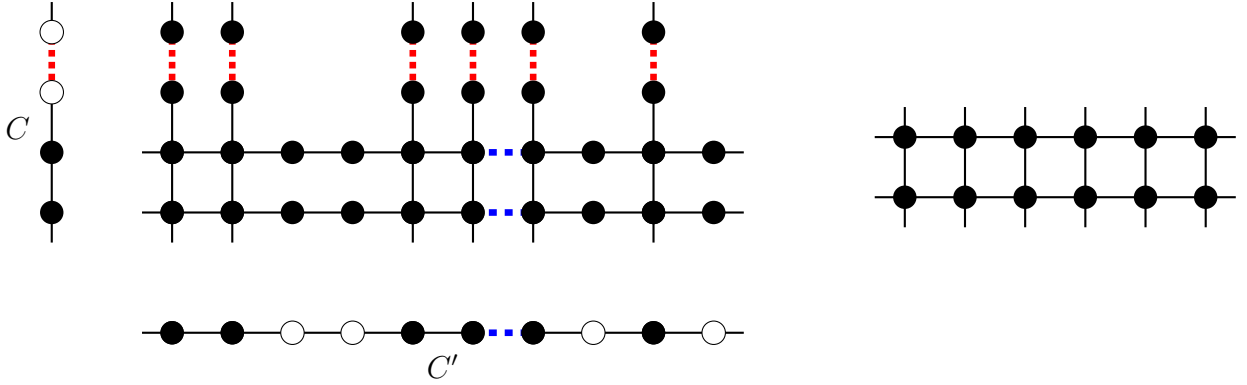
\begin{figure}
\begin{center}

\begin{tikzpicture}[scale=.8]  %anchored product

    \foreach \x in {-2, 0, 1, 4, 5, 6, 8}{
    \draw[thick] (\x,-.5) -- (\x,2);
    \draw[thick] (\x,3) -- (\x,3.5);
}
%    \foreach \y in {-2}{
%    \draw[thick] (-.5,\y) -- (9.5,\y);
%}

    \foreach \y in {-2, 0, 1}{
    \draw[thick] (-.5,\y) -- (5,\y);
        \draw[thick] (6,\y) -- (9.5,\y);
}

    \foreach \x in {-2, 0, 1, 4, 5, 6, 8}{
    \draw[red, dotted, line width=2.5] (\x,2) -- (\x,3);
}

    \foreach \y in {-2, 0, 1}{
    \draw[blue, dashed, line width=2.5] (5,\y) -- (6,\y);
}

    \foreach \x in {0,1, 4, 5, 6, 8}{
      \node[style={circle,fill=black},scale=.8] at (\x,-2){};% [label=left:$01$]{};
}

    \foreach \x in {0,1, 4, 5, 6, 8}{
    \foreach \y in {0, ..., 3}{
      \node[style={circle,fill=black},scale=.8] at (\x,\y){};% [label=left:$01$]{};
}
}

    \foreach \x in {0, ..., 9}{
    \foreach \y in {0,1}{
      \node[style={circle,fill=black},scale=.8] at (\x,\y){};% [label=left:$01$]{};
}
}

    \foreach \x in {0,1, 4, 5, 6, 8}{
      \node[style={circle,fill=black},scale=.8] at (\x,-2){};% [label=left:$01$]{};
}

    \foreach \x in {2, 3, 7, 9}{
      \node[style={circle, draw, fill=white},scale=.8] at (\x,-2){};% [label=left:$01$]{};
}

    \foreach \y in {0,1}{
      \node[style={circle,fill=black},scale=.8] at (-2,\y){};% [label=left:$01$]{};
}

    \foreach \y in {2,3}{
      \node[style={circle, draw, fill=white},scale=.8] at (-2,\y){};% [label=left:$01$]{};
}

      \node (0) at (-2,1.4) [label=left:$C$]{};
      \node (1) at (4.5,-2) [label=below:$C'$]{};

\end{tikzpicture}
\hfill
\begin{tikzpicture}[scale=.8]  %underlying torus

    \foreach \x in {0,...,5}{
    \draw[thick] (\x,-.5) -- (\x,1.5);
}
    \foreach \y in {0,1}{
    \draw[thick] (-.5,\y) -- (5.5,\y);
}

    \foreach \x in {0, ..., 5}{
    \foreach \y in {0,1}{
      \node[style={circle,fill=black},scale=.8] at (\x,\y){};% [label=left:$01$]{};
}
}

      \node (0) at (0,-3) {};

\end{tikzpicture}
\end{center}
\caption{Left: Two cycles $C=(0, 1,2,3,0)$ and $C'= (0,1,2,3, 4, 5, 6, 7, 8, 9, 0)$, and the anchored product  $(C, S)\anc (C', S')$, where  $S=\{0,1\}$ and $S'= \{0, 1, 4, 5,6, 8\}$. A given row and column of the anchored product are highlighted in dotted red and dashed blue, respectively, along with the corresponding edge from the original cycle. Right: The underlying torus of $(C, S)\anc (C', S')$.} \label{anchorpic}
\end{figure}

We call the Cartesian product of two cycles $C \sq C'$ a \textit{torus}. Given $v \in V(C)$, we call the cycle induced in $C \sq C'$ by $\{v\} \times V(C')$ a \textit{horizontal cycle}, and given  $v' \in V(C')$, we call the cycle induced in $C \sq C'$ by $V(C) \times \{v'\}$ a \textit{vertical cycle}. A \textit{subdivided torus} is a graph obtained from a torus by subdividing edges so that all edges in each row are subdivided by the same number of vertices and all edges in each column are subdivided by the same number of vertices. The number of subdivisions may be different in different rows or columns.  More formally, a graph $F$ is a subdivided torus if for some cycles 
$C$ and $C'$ and vertex sets $S \subseteq V(C)$ and $S' \subseteq V(C')$, $F=(C,S) \anc (C',S')$.  Note that a vertex has degree four in a subdivided torus if and only if it is in $S \times S'$, and otherwise it has degree two.
We also see that a subdivided torus is a subgraph of a larger torus $C\sq C'$ and a subdivision of a smaller torus obtained by contracting all degree two vertices. We refer to this smaller torus as the
{\it underlying torus} of the subdivided torus. Note that the underlying torus of $F$ is a Cartesian product of two cycles with lengths $|S|$ and $|S'|$, respectively. The set of edges of a row of $C\sq C'$ that are in $F$ is called a {\it row of a subdivided torus}. The columns are defined similarly.  Figures~\ref{anchorpic} and \ref{2wiggle} show examples of subdivided tori along with their underlying tori. Note that, as in Figure~\ref{anchorpic}, the underlying torus may be a product of a cycle of length 2 with another cycle.

%%%%%%%%%%%%%%%%%%%%%%%%%%%%%%%%%%%%%%%%%%%%%%%%%%55
%%%%%%%%%%%%%%%%%%%%%%%%%%%%%%%%%%%%%%%%%%%%%%%%%%

%%%%%%%%%%%%%%%%%%%%%%%%%%%%%%%%%%%%%%%%%%%%%%
%%%%%%%%%%%%%%%%%%%%%%%%%%%%%%%%%%%%%%%%%%%%%%
%%%%%%%%%%%%%%%%%%%%%%%%%%%%%%%%%%%%%%%%%%%%%%

\section{Cycle decompositions of tori and subdivided tori}\label{splittable}

%%%%%%%%%%%%%%%%%%%%%%%%%%%%%%%%%%%%%%%%%%%%%%
%%%%%%%%%%%%%%%%%%%%%%%%%%%%%%%%%%%%%%%%%%%%%%
%%%%%%%%%%%%%%%%%%%%%%%%%%%%%%%%%%%%%%%%%%%%%%

\subsection{The $k$-wiggle decomposition on tori and subdivided tori}\label{sectionkwig}
Let $k \ge 2$ be an integer. We define a method for decomposing a torus that is product of a cycle $C$ of length divisible by $k$ and a cycle $C'$ of length at least $k$ and  congruent to $k \pmod 2$ into $k$ cycles of equal length called the \textit{$k$-wiggle decomposition}.   Let $C = (0,1, \ldots,  n-1, 0)$ be a cycle of length $n$ and $C' = (0,1, \ldots, m-1,0)$ a cycle of length $m$, where $k$ is a divisor of $n$ and $m=2s + k$ for some integer $s \ge 0$.  We say that a torus $T$  {\it allows the $k$-wiggle decomposition} if it meets these conditions.
% i.e. $T$ is a Cartesian product of two cycles $C \sq C'$ where $C$ has length divisible by $k$ and $C'$ has length at least $k$ and congruent to $k$ modulo $2$. 
In the important case $k=2$,  the condition for allowing the $k$-wiggle decomposition is equivalent to $n$ and $m$ being even. A decomposition of  the torus $C \sq C'$ into $k$ cycles $C_1, \ldots, C_{k}$, is called the $k$-\textit{wiggle decomposition}, if for $\ell =1, \ldots, k$,

\begin{align*}
E(C_\ell) = &\{ (i,j)({i+1},j) : ~ 0 \le j \le m-k-1,~ i \equiv \ell \pmod{k} \}\\
& \cup \{ ({i},j)({i+1},j) : ~0 \le p \le k-1,  ~i \equiv \ell + p \pmod{k}, ~j=m-k +p \} \\ %m-k+1 \le j \le m-1,
&\cup \{ (i,j)(i,{j+1}) : ~i \equiv \ell \pmod{k} , ~0 \le j \le m-k-1, ~j \text{ odd} \} \\
&\cup \{ (i,j)(i,{j+1}) :~ i \equiv \ell +1 \pmod{k} , ~0 \le j \le m-k-1, ~j \text{ even} \} \\
&\cup \{ (i,j)(i,{j+1}) : ~0 \le p \le k-1, j=m-k +p, ~i \equiv \ell + p +1 \pmod{k} \}. \\
\end{align*}
See Figure~\ref{2and4wiggle16} for examples of the $k$-wiggle decomposition on Cartesian products of cycles for various $k$. The term wiggle comes from the fact that, when drawn, each of the $k$ cycles wiggle across the torus, before rising $k$ levels in a staircase pattern to repeat the wiggle $k$ levels above. Note that all cycles in a $k$-wiggle decomposition on a torus have the same length, and further, the cycles are all vertical translations of each other, i.e. the vertex $(i,j) \in V(C_1)$ if and only if the vertex $(i+\ell-1, j) \in V(C_\ell)$ and the edge $(i,j)(i',j') \in E(C_1)$ if and only if the edge $(i +\ell-1, j)(i'+\ell-1, j')  \in E(C_\ell)$. Finally, note that every vertex in the torus is in the vertex set of exactly two cycles $C_\ell$ and $C_{\ell+1}$, where we take the subscripts modulo~$k$.

\begin{figure}

\begin{tikzpicture}[scale=.4]  %Q4timesQ4

    \foreach \x in {4, ..., 15}{
    \draw[thick, red] (\x,3.5) -- (\x,15.5);
}
    \foreach \y in {4, ..., 15}{
    \draw[thick, red] (3.5,\y) -- (15.5,\y);
}
%
%    \draw[thick, ultra thick, red]  (3.5,0) -- (4,0) -- (4,1) -- (5,1) -- (5,0) -- (6,0) -- (6,1) -- (7,1) -- (7,0) -- (8,0) -- (8,1) -- (9,1) -- (9,0) -- (10,0) -- (10,1) -- (11,1) -- (11,0) -- (12,0) --(12,1) -- (13,1) -- (13,2) --(14,2) --(14,3) --(15,3) --(15,4) --(15.5,4) ;
%    \draw[thick, thick, blue] (3.5,1) -- (4,1) -- (4,2) -- (5,2) -- (5,1) -- (6,1) -- (6,2) -- (7,2) -- (7,1) -- (8,1) -- (8,2) -- (9,2) -- (9,1) -- (10,1) -- (10,2) -- (11,2) -- (11,1) -- (12,1) --(12,2) -- (13,2) -- (13,3) --(14,3) --(14,4) --(15,4) --(15,5) --(15.5,5 ) ;
%    \draw[thick, thick, green]  (3.5,2) -- (4,2) -- (4,3) -- (5,3) -- (5,2) -- (6,2) -- (6,3) -- (7,3) -- (7,2) -- (8,2) -- (8,3) -- (9,3) -- (9,2) -- (10,2) -- (10,3) -- (11,3) -- (11,2) -- (12,2) --(12,3) -- (13,3) -- (13,4) --(14,4) --(14,5) --(15,5) --(15,6) --(15.5,6) ;

    \draw[thick, ultra thick]  (3.5,4) -- (4,4) -- (4,5) -- (5,5) -- (5,4) -- (6,4) -- (6,5) -- (7,5) -- (7,4) -- (8,4) -- (8,5) -- (9,5) -- (9,4) -- (10,4) -- (10,5) -- (11,5) -- (11,4) -- (12,4) --(12,5) -- (13,5) -- (13,4) --(14,4) --(14,5) --(15,5) --(15,6) --(15.5,6) ;
%    \draw[thick, thick, blue]  (3.5,5) -- (4,5) -- (4,6) -- (5,6) -- (5,5) -- (6,5) -- (6,6) -- (7,6) -- (7,5) -- (8,5) -- (8,6) -- (9,6) -- (9,5) -- (10,5) -- (10,6) -- (11,6) -- (11,5) -- (12,5) --(12,6) -- (13,6) -- (13,7) --(14,7) --(14,8) --(15,8) --(15,9) --(15.5,9) ;
    \draw[thick, ultra thick] (3.5,6) -- (4,6) -- (4,7) -- (5,7) -- (5,6) -- (6,6) -- (6,7) -- (7,7) -- (7,6) -- (8,6) -- (8,7) -- (9,7) -- (9,6) -- (10,6) -- (10,7) -- (11,7) -- (11,6) -- (12,6) --(12,7) -- (13,7) -- (13,6) --(14,6) --(14,7) --(15,7) --(15,8) --(15.5,8) ;

    \draw[thick, ultra thick]  (3.5,8) -- (4,8) -- (4,9) -- (5,9) -- (5,8) -- (6,8) -- (6,9) -- (7,9) -- (7,8) -- (8,8) -- (8,9) -- (9,9) -- (9,8) -- (10,8) -- (10,9) -- (11,9) -- (11,8) -- (12,8) --(12,9) -- (13,9) -- (13,8) --(14,8) --(14,9) --(15,9) --(15,10) --(15.5,10) ;
 %   \draw[thick, thick, blue]  (3.5,9) -- (4,9) -- (4,10) -- (5,10) -- (5,9) -- (6,9) -- (6,10) -- (7,10) -- (7,9) -- (8,9) -- (8,10) -- (9,10) -- (9,9) -- (10,9) -- (10,10) -- (11,10) -- (11,9) -- (12,9) --(12,10) -- (13,10) -- (13,11) --(14,11) --(14,12) --(15,12) --(15,13) --(15.5,13) ;
    \draw[thick, ultra thick] (3.5,10) -- (4,10) -- (4,11) -- (5,11) -- (5,10) -- (6,10) -- (6,11) -- (7,11) -- (7,10) -- (8,10) -- (8,11) -- (9,11) -- (9,10) -- (10,10) -- (10,11) -- (11,11) -- (11,10) -- (12,10) --(12,11) -- (13,11) -- (13,10) --(14,10) --(14,11) --(15,11) --(15,12) --(15.5,12) ;

    \draw[thick, ultra thick]  (3.5,12) -- (4,12) -- (4,13) -- (5,13) -- (5,12) -- (6,12) -- (6,13) -- (7,13) -- (7,12) -- (8,12) -- (8,13) -- (9,13) -- (9,12) -- (10,12) -- (10,13) -- (11,13) -- (11,12) -- (12,12) --(12,13) -- (13,13) -- (13,12) --(14,12) --(14,13) --(15,13) --(15,14) -- (15.5,14);
%    \draw[thick, thick, blue]  (3.5,13) -- (4,13) -- (4,14) -- (5,14) -- (5,13) -- (6,13) -- (6,14) -- (7,14) -- (7,13) -- (8,13) -- (8,14) -- (9,14) -- (9,13) -- (10,13) -- (10,14) -- (11,14) -- (11,13) -- (12,13) --(12,14) -- (13,14) -- (13,15) --(14,15) --(14,15.5) ;
    \draw[thick, ultra thick]  (3.5,14) -- (4,14) -- (4,15) -- (5,15) -- (5,14) -- (6,14) -- (6,15) -- (7,15) -- (7,14) -- (8,14) -- (8,15) -- (9,15) -- (9,14) -- (10,14) -- (10,15) -- (11,15) -- (11,14) -- (12,14) --(12,15) -- (13,15) -- (13,14) -- (14,14) -- (14,15) -- (15,15) -- (15,15.5);

%  \draw[thick, ultra thick, red] (15,3.5) -- (15,4) -- (15.5,4);
%  \draw[thick, thick, blue] (14,3.5) -- (14,4) -- (15,4) --(15,5) -- (15.5,5);
  \draw[thick, ultra thick] (15,3.5) -- (15,4) -- (15.5,4);

    \foreach \x in {4, ..., 15}{
    \foreach \y in {4, ..., 15}{
      \node[style={circle, draw, fill=white},scale=.4] at (\x,\y){};% [label=left:$01$]{};
}
}

\end{tikzpicture}
\hfill
\begin{tikzpicture}[scale=.4]  %Q4timesQ4

    \foreach \x in {4, ..., 14}{
    \draw[thick, red] (\x,3.5) -- (\x,15.5);
}
    \foreach \y in {4, ..., 15}{
    \draw[thick, red] (3.5,\y) -- (14.5,\y);
}
%
%    \draw[ultra thick]  (3.5,0) -- (4,0) -- (4,1) -- (5,1) -- (5,0) -- (6,0) -- (6,1) -- (7,1) -- (7,0) -- (8,0) -- (8,1) -- (9,1) -- (9,0) -- (10,0) -- (10,1) -- (11,1) -- (11,0) -- (12,0) --(12,1) -- (13,1) -- (13,2) --(14,2) --(14,3) --(15,3) --(15,4) --(15.5,4) ;
%    \draw[thick, thick, blue] (3.5,1) -- (4,1) -- (4,2) -- (5,2) -- (5,1) -- (6,1) -- (6,2) -- (7,2) -- (7,1) -- (8,1) -- (8,2) -- (9,2) -- (9,1) -- (10,1) -- (10,2) -- (11,2) -- (11,1) -- (12,1) --(12,2) -- (13,2) -- (13,3) --(14,3) --(14,4) --(15,4) --(15,5) --(15.5,5 ) ;
%    \draw[thick, thick, green]  (3.5,2) -- (4,2) -- (4,3) -- (5,3) -- (5,2) -- (6,2) -- (6,3) -- (7,3) -- (7,2) -- (8,2) -- (8,3) -- (9,3) -- (9,2) -- (10,2) -- (10,3) -- (11,3) -- (11,2) -- (12,2) --(12,3) -- (13,3) -- (13,4) --(14,4) --(14,5) --(15,5) --(15,6) --(15.5,6) ;

    \draw[ultra thick]  (3.5,4) -- (4,4) -- (4,5) -- (5,5) -- (5,4) -- (6,4) -- (6,5) -- (7,5) -- (7,4) -- (8,4) -- (8,5) -- (9,5) -- (9,4) -- (10,4) -- (10,5) -- (11,5) -- (11,4) -- (12,4) --(12,5) -- (13,5) -- (13,6) --(14,6) --(14,7) --(14.5,7);
    \draw[ thick, blue]  (3.5,5) -- (4,5) -- (4,6) -- (5,6) -- (5,5) -- (6,5) -- (6,6) -- (7,6) -- (7,5) -- (8,5) -- (8,6) -- (9,6) -- (9,5) -- (10,5) -- (10,6) -- (11,6) -- (11,5) -- (12,5) --(12,6) -- (13,6) -- (13,7) --(14,7) --(14,8) --(14.5,8);
 %   \draw[thick, thick, green] (3.5,6) -- (4,6) -- (4,7) -- (5,7) -- (5,6) -- (6,6) -- (6,7) -- (7,7) -- (7,6) -- (8,6) -- (8,7) -- (9,7) -- (9,6) -- (10,6) -- (10,7) -- (11,7) -- (11,6) -- (12,6) --(12,7) -- (13,7) -- (13,8) --(14,8) --(14,9) --(14.5,9);

    \draw[ultra thick]  (3.5,7) -- (4,7) -- (4,8) -- (5,8) -- (5,7) -- (6,7) -- (6,8) -- (7,8) -- (7,7) -- (8,7) -- (8,8) -- (9,8) -- (9,7) -- (10,7) -- (10,8) -- (11,8) -- (11,7) -- (12,7) --(12,8) -- (13,8) -- (13,9) --(14,9) --(14,10) --(14.5,10);
    \draw[thick, blue]  (3.5,8) -- (4,8) -- (4,9) -- (5,9) -- (5,8) -- (6,8) -- (6,9) -- (7,9) -- (7,8) -- (8,8) -- (8,9) -- (9,9) -- (9,8) -- (10,8) -- (10,9) -- (11,9) -- (11,8) -- (12,8) --(12,9) -- (13,9) -- (13,10) --(14,10) --(14,11) --(14.5,11);
%    \draw[thick, thick, green]  (3.5,9) -- (4,9) -- (4,10) -- (5,10) -- (5,9) -- (6,9) -- (6,10) -- (7,10) -- (7,9) -- (8,9) -- (8,10) -- (9,10) -- (9,9) -- (10,9) -- (10,10) -- (11,10) -- (11,9) -- (12,9) --(12,10) -- (13,10) -- (13,11) --(14,11) --(14,12) --(14.5,12);

    \draw[ultra thick] (3.5,10) -- (4,10) -- (4,11) -- (5,11) -- (5,10) -- (6,10) -- (6,11) -- (7,11) -- (7,10) -- (8,10) -- (8,11) -- (9,11) -- (9,10) -- (10,10) -- (10,11) -- (11,11) -- (11,10) -- (12,10) --(12,11) -- (13,11) -- (13,12) --(14,12) --(14,13) --(14.5,13);
    \draw[thick, blue]  (3.5,11) -- (4,11) -- (4,12) -- (5,12) -- (5,11) -- (6,11) -- (6,12) -- (7,12) -- (7,11) -- (8,11) -- (8,12) -- (9,12) -- (9,11) -- (10,11) -- (10,12) -- (11,12) -- (11,11) -- (12,11) --(12,12) -- (13,12) -- (13,13) --(14,13) --(14,14) --(14.5,14);
%    \draw[thick, thick, green]  (3.5,12) -- (4,12) -- (4,13) -- (5,13) -- (5,12) -- (6,12) -- (6,13) -- (7,13) -- (7,12) -- (8,12) -- (8,13) -- (9,13) -- (9,12) -- (10,12) -- (10,13) -- (11,13) -- (11,12) -- (12,12) --(12,13) -- (13,13) -- (13,14) --(14,14) --(14,15) --(15,15) --(15,15.5);

    \draw[ultra thick]  (3.5,13) -- (4,13) -- (4,14) -- (5,14) -- (5,13) -- (6,13) -- (6,14) -- (7,14) -- (7,13) -- (8,13) -- (8,14) -- (9,14) -- (9,13) -- (10,13) -- (10,14) -- (11,14) -- (11,13) -- (12,13) --(12,14) -- (13,14) -- (13,15) --(14,15) --(14,15.5) ;
    \draw[thick, blue]  (3.5,14) -- (4,14) -- (4,15) -- (5,15) -- (5,14) -- (6,14) -- (6,15) -- (7,15) -- (7,14) -- (8,14) -- (8,15) -- (9,15) -- (9,14) -- (10,14) -- (10,15) -- (11,15) -- (11,14) -- (12,14) --(12,15) -- (13,15) -- (13,15.5);
 %   \draw[thick, thick, green]  (3.5,14) -- (4,14) -- (4,15) -- (5,15) -- (5,14) -- (6,14) -- (6,15) -- (7,15) -- (7,14) -- (8,14) -- (8,15) -- (9,15) -- (9,14) -- (10,14) -- (10,15) -- (11,15) -- (11,14) -- (12,14) --(12,15) -- (13,15) -- (13,15.5);

  \draw[ultra thick] (14,3.5) -- (14,4) -- (14.5,4);
  \draw[thick, blue] (13,3.5) -- (13,4) -- (14,4) -- (14,5) -- (14.5,5);
%  \draw[thick, thick, green] (13,3.5) -- (13,4) -- (14,4) --(14,5) -- (15,5) -- (15,6) -- (15.5,6);

    \foreach \x in {4, ..., 14}{
    \foreach \y in {4, ..., 15}{
      \node[style={circle, draw, fill=white},scale=.4] at (\x,\y){};% [label=left:$01$]{};
}
}

\end{tikzpicture}
\hfill
\begin{tikzpicture}[scale=.4]  %Q4timesQ4

    \foreach \x in {4, ..., 15}{
    \draw[thick, purple] (\x,3.5) -- (\x,15.5);
}
    \foreach \y in {4, ..., 15}{
    \draw[thick, purple] (3.5,\y) -- (15.5,\y);
}
%
%    \draw[ultra thick]  (3.5,0) -- (4,0) -- (4,1) -- (5,1) -- (5,0) -- (6,0) -- (6,1) -- (7,1) -- (7,0) -- (8,0) -- (8,1) -- (9,1) -- (9,0) -- (10,0) -- (10,1) -- (11,1) -- (11,0) -- (12,0) --(12,1) -- (13,1) -- (13,2) --(14,2) --(14,3) --(15,3) --(15,4) --(15.5,4) ;
%    \draw[thick, blue] (3.5,1) -- (4,1) -- (4,2) -- (5,2) -- (5,1) -- (6,1) -- (6,2) -- (7,2) -- (7,1) -- (8,1) -- (8,2) -- (9,2) -- (9,1) -- (10,1) -- (10,2) -- (11,2) -- (11,1) -- (12,1) --(12,2) -- (13,2) -- (13,3) --(14,3) --(14,4) --(15,4) --(15,5) --(15.5,5 ) ;
%    \draw[thick, green]  (3.5,2) -- (4,2) -- (4,3) -- (5,3) -- (5,2) -- (6,2) -- (6,3) -- (7,3) -- (7,2) -- (8,2) -- (8,3) -- (9,3) -- (9,2) -- (10,2) -- (10,3) -- (11,3) -- (11,2) -- (12,2) --(12,3) -- (13,3) -- (13,4) --(14,4) --(14,5) --(15,5) --(15,6) --(15.5,6) ;

    \draw[ultra thick]  (3.5,4) -- (4,4) -- (4,5) -- (5,5) -- (5,4) -- (6,4) -- (6,5) -- (7,5) -- (7,4) -- (8,4) -- (8,5) -- (9,5) -- (9,4) -- (10,4) -- (10,5) -- (11,5) -- (11,4) -- (12,4) --(12,5) -- (13,5) -- (13,6) --(14,6) --(14,7) --(15,7) --(15,8) --(15.5,8) ;
    \draw[thick, red]  (3.5,5) -- (4,5) -- (4,6) -- (5,6) -- (5,5) -- (6,5) -- (6,6) -- (7,6) -- (7,5) -- (8,5) -- (8,6) -- (9,6) -- (9,5) -- (10,5) -- (10,6) -- (11,6) -- (11,5) -- (12,5) --(12,6) -- (13,6) -- (13,7) --(14,7) --(14,8) --(15,8) --(15,9) --(15.5,9) ;
    \draw[thick, blue] (3.5,6) -- (4,6) -- (4,7) -- (5,7) -- (5,6) -- (6,6) -- (6,7) -- (7,7) -- (7,6) -- (8,6) -- (8,7) -- (9,7) -- (9,6) -- (10,6) -- (10,7) -- (11,7) -- (11,6) -- (12,6) --(12,7) -- (13,7) -- (13,8) --(14,8) --(14,9) --(15,9) --(15,10) --(15.5,10) ;

    \draw[ultra thick]  (3.5,8) -- (4,8) -- (4,9) -- (5,9) -- (5,8) -- (6,8) -- (6,9) -- (7,9) -- (7,8) -- (8,8) -- (8,9) -- (9,9) -- (9,8) -- (10,8) -- (10,9) -- (11,9) -- (11,8) -- (12,8) --(12,9) -- (13,9) -- (13,10) --(14,10) --(14,11) --(15,11) --(15,12) --(15.5,12) ;
    \draw[thick, red]  (3.5,9) -- (4,9) -- (4,10) -- (5,10) -- (5,9) -- (6,9) -- (6,10) -- (7,10) -- (7,9) -- (8,9) -- (8,10) -- (9,10) -- (9,9) -- (10,9) -- (10,10) -- (11,10) -- (11,9) -- (12,9) --(12,10) -- (13,10) -- (13,11) --(14,11) --(14,12) --(15,12) --(15,13) --(15.5,13) ;
    \draw[thick, blue] (3.5,10) -- (4,10) -- (4,11) -- (5,11) -- (5,10) -- (6,10) -- (6,11) -- (7,11) -- (7,10) -- (8,10) -- (8,11) -- (9,11) -- (9,10) -- (10,10) -- (10,11) -- (11,11) -- (11,10) -- (12,10) --(12,11) -- (13,11) -- (13,12) --(14,12) --(14,13) --(15,13) --(15,14) --(15.5,14) ;

    \draw[ultra thick]  (3.5,12) -- (4,12) -- (4,13) -- (5,13) -- (5,12) -- (6,12) -- (6,13) -- (7,13) -- (7,12) -- (8,12) -- (8,13) -- (9,13) -- (9,12) -- (10,12) -- (10,13) -- (11,13) -- (11,12) -- (12,12) --(12,13) -- (13,13) -- (13,14) --(14,14) --(14,15) --(15,15) --(15,15.5);
    \draw[thick, red]  (3.5,13) -- (4,13) -- (4,14) -- (5,14) -- (5,13) -- (6,13) -- (6,14) -- (7,14) -- (7,13) -- (8,13) -- (8,14) -- (9,14) -- (9,13) -- (10,13) -- (10,14) -- (11,14) -- (11,13) -- (12,13) --(12,14) -- (13,14) -- (13,15) --(14,15) --(14,15.5) ;
    \draw[thick, blue]  (3.5,14) -- (4,14) -- (4,15) -- (5,15) -- (5,14) -- (6,14) -- (6,15) -- (7,15) -- (7,14) -- (8,14) -- (8,15) -- (9,15) -- (9,14) -- (10,14) -- (10,15) -- (11,15) -- (11,14) -- (12,14) --(12,15) -- (13,15) -- (13,15.5);

  \draw[ultra thick] (15,3.5) -- (15,4) -- (15.5,4);
  \draw[thick, red] (14,3.5) -- (14,4) -- (15,4) --(15,5) -- (15.5,5);
  \draw[thick, blue] (13,3.5) -- (13,4) -- (14,4) --(14,5) -- (15,5) -- (15,6) -- (15.5,6);

    \foreach \x in {4, ..., 15}{
    \foreach \y in {4, ..., 15}{
      \node[style={circle, draw, fill=white},scale=.4] at (\x,\y){};% [label=left:$01$]{};
}
}

\end{tikzpicture}

\caption{Left: The 2-wiggle decomposition of $C_{12} \sq C_{12}$ into two cycles (red and thick black).  Middle: The 3-wiggle decomposition of $C_{12} \sq C_{11}$ into three cycles (red, blue, and thick black). Right: The 4-wiggle decomposition of $C_{12} \sq C_{12}$ into four cycles (purple, blue, red, and thick black).}\label{2and4wiggle16}
\end{figure}
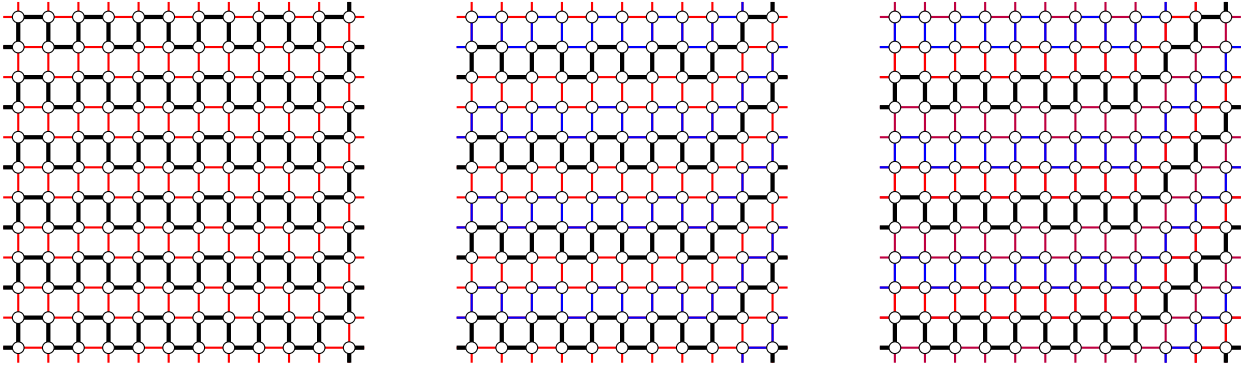

Consider now a subdivided torus $F=(C,S) \anc (C',S')$ such that its underlying torus $T$ allows a $k$-wiggle decomposition, i.e., $|S|$ is a multiple of $k$ and $|S'|$ is at least $k$ and congruent to $k$ modulo $2$. 
We define  a $k$-wiggle decomposition of $F$ as a decomposition obtained from the $k$-wiggle decomposition of $T$ by subdividing respective edges. More precisely, if an edge $e$ is in the $i$th cycle of the decomposition of $T$, we let all edges of $F$ obtained by subdividing $e$ be in the $i$th cycle of the decomposition of $F$. See Figure \ref{2wiggle}. We say a subdivided torus allows a $k$-wiggle decomposition if its underlying torus does.

The $k$-wiggle decomposition on a subdivided torus may not produce cycles of all the same length, for example if exactly one vertical edge of $C$ is subdivided. Next we give sufficient conditions on the subdivided torus to guarantee the cycles  of the $k$-wiggle decomposition are all the same length. Let $C$ be a cycle, $S \subseteq V(C)$.  We say the pair $(C,S)$ is \textit{distance regular} if, when following the cycle in a given direction, every path between consecutive elements of $S$ has the same length.

\begin{prop}\label{2even}
Let $C$ and $C'$ be cycles, $S \subseteq V(C)$, where $(C,S)$ is distance regular, and $S' \subseteq V(C')$.  Assume that the underlying torus of $(C,S) \anc (C',S')$ allows the $k$-wiggle decomposition.  Then the $k$-wiggle decomposition on $(C,S) \anc (C',S')$ yields $k$ cycles of the same length. 
\end{prop}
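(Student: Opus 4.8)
The plan is to reduce the claim to a length count on the underlying torus $T$, and then to exploit the translation symmetry of the $k$-wiggle decomposition of $T$ that was recorded just before the statement. Recall that the $k$-wiggle decomposition of $F$ is defined by taking the $k$-wiggle cycles $C_1,\dots,C_k$ of $T$ and replacing each edge of $C_\ell$ by the path into which it is subdivided in $F$; write $\tilde C_\ell$ for the resulting cycle. So $|E(\tilde C_\ell)|$ is just the sum, over the edges $e$ of $C_\ell$, of the number of edges into which $e$ is subdivided in $F$, and the whole task is to show this sum is independent of $\ell$.

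First I would record the two subdivision patterns separately. Each vertical edge of $T$ is an edge of the underlying cycle $\bar C$ of length $|S|$, and in $F$ it corresponds to the path in $C$ joining two consecutive elements of $S$. Since $(C,S)$ is distance regular, all of these paths have a common length, say $R$; hence \emph{every} vertical edge of $T$, regardless of its row, is subdivided into exactly $R$ edges in $F$. For horizontal edges I assume no such uniformity: for each column $b$ of $T$ (an edge of the underlying cycle $\bar C'$ of length $|S'|$), the definition of a subdivided torus guarantees that all horizontal edges in column $b$ are subdivided the same number of times, into $L_b$ edges, where the numbers $L_b$ may differ from column to column. Letting $v_\ell$ denote the number of vertical edges of $C_\ell$ and $h_{\ell,b}$ the number of horizontal edges of $C_\ell$ lying in column $b$, this bookkeeping yields
$$ |E(\tilde C_\ell)| = v_\ell\, R + \sum_{b} h_{\ell,b}\, L_b, $$
so it remains only to show the right-hand side does not depend on $\ell$.

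Here I would invoke the translation symmetry of the $k$-wiggle decomposition of $T$ stated in the excerpt: the map $\tau\colon (i,j)\mapsto(i+\ell-1,\,j)$ carries $C_1$ bijectively onto $C_\ell$, sending edges to edges and preserving the vertical/horizontal type of each edge. The crucial observation is that $\tau$ fixes the second coordinate, so it maps every horizontal edge in column $b$ to a horizontal edge in the \emph{same} column $b$; consequently $h_{\ell,b}=h_{1,b}$ for all $b$, and likewise $v_\ell=v_1$. Substituting these equalities gives $|E(\tilde C_\ell)| = v_1 R + \sum_b h_{1,b}L_b$ for every $\ell$, which is the desired conclusion. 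The only genuine subtlety — and the point I would verify carefully — is the asymmetry in the hypotheses: because $\tau$ shifts the first coordinate, it permutes the rows of $T$, so keeping the vertical contribution $v_\ell R$ invariant requires all rows to be subdivided equally, which is exactly distance regularity of $(C,S)$; by contrast $\tau$ never moves an edge out of its column, so no distance-regularity hypothesis on $(C',S')$ is needed. Beyond confirming this row/column dichotomy, the argument is routine counting.
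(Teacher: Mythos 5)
Your proof is correct and follows essentially the same route as the paper's: both split each wiggle cycle's length in the subdivided torus into a vertical contribution (equalized across cycles by distance regularity of $(C,S)$) and a horizontal contribution (equalized column by column, since each column's edges all carry the same subdivision number $L_b$). The only difference is in mechanism, and it is minor: the paper gets the needed counts by direct enumeration from the definition of the $k$-wiggle decomposition ($|S|/k$ horizontal edges of each cycle in every column; $|S'|-k+1$ vertical edges in rows indexed $\equiv \ell \pmod k$ and one edge in every other row), whereas you deduce the same invariance from the vertical-translation symmetry the paper records immediately after that definition, together with your correct observation that translations fix columns but permute rows---which is exactly why distance regularity is needed only for $(C,S)$.
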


See Figure \ref{2wiggle} for an illustration with $k=2$. In the figure, $|S|=4$, $|S'|=8$, and $(C,S)$ is distance regular as each path between consecutive elements of $S$ has length 2.  Each cycle in the 2-wiggle decomposition has 52 total edges: 20 horizontal edges and 32 vertical edges.

\begin{proof}
Let $C_1, \ldots, C_{k}$ be the the cycles in the $k$-wiggle decomposition. For $1 \le \ell \le k$, $C_\ell$  has $|S|/k$ edges in each column, and thus each cycle has the same number of horizontal edges in the subdivided torus. For $1 \le \ell \le k$, $C_\ell$ has $|S'|-k+1$ edges in each row whose edges were obtained in a subdivision of  the edges from a row of index congruent to $\ell \pmod{k}$ in the underlying torus, and $1$ edge in each other row.  Since the union of edges in all rows form vertical copies of $C$ and $(C,S)$ is distance regular, all $k$ cycles have the same number of vertical edges.  Thus every cycle has the same length.
\end{proof}

The conclusion of Proposition~\ref{2even} also holds under the weaker assumption that the sum of the lengths of every $k$th path in $(C,S)$ is identical. For example, $(C,S)$ would meet this condition when $k=3$ if the consecutive path lengths were ${\bf 1}, 1, 2, {\bf 1}, 3, 2, {\bf 3}, 1, 1$, since the sum of the length of every third path is $5$, see for example the bold numbers giving the sum $1+1+3$.  However we will not need this generality so we use the simpler distance regular condition.

%%%%%%%%%%%%%%%%%%%%%%%%%%%%%%%%%%%%%%%%%%%%%%
\subsection{Splittable decompositions}
%%%%%%%%%%%%%%%%%%%%%%%%%%%%%%%%%%%%%%%%%%%%%%

In this section we define splittable decompositions, and prove some related properties about $k$-wiggle decompositions of subdivided tori.

A set of graphs $\{G_1, \ldots,  G_a\}$ forms a {\it splittable decomposition} of a graph $G$ if 
it is a decomposition of $G$ and for $i=1, \ldots, a$, there are pairwise disjoint sets $S_i \subseteq V(G_i)$ with $|S_1| = |S_2| = \cdots = |S_a| \ge 2$, whose union is $V(G)$.  We refer to the sets $S_1, \ldots, S_a$ as {\it representing sets} of the decomposition. The term splittable comes from the fact that the vertices of $G$ can be split evenly among the graphs in the decomposition into their representing sets.

For $a, m\ge 1$, if the set of graphs $\{G_1, \ldots, G_{am}\}$ is a decomposition of a graph $G$, we say it forms an {\it $a$-splittable decomposition} of $G$ if the set $\{G_1, \ldots, G_{am}\}$ can be partitioned into $m$ pairwise disjoint subsets $\F_1, \ldots, \F_m$,  each containing $a$ graphs, such that the graphs in each $\F_i$, $i=1, \ldots, m$ form a splittable decomposition of a spanning subgraph of $G$. We call these $\F_i$ the \textit{splitting sets} of the decomposition.  An $a$-splittable decomposition of $G$ is called an \textit{$(a,b)$-splittable decomposition} if 
each $\F_i$ can be partitioned into subsets $\F_{i,1}, \ldots, \F_{i,a/b}$, each of cardinality $b$, where the graphs in $\F_{i,j}$  are pairwise vertex disjoint and span $V(G)$.  We call these $\F_{i,j}$ the \textit{splitting subsets} of the decomposition.
Note that if all of the graphs in an $(a,b)$-splittable decomposition have the same number of vertices $v$, then $b= |V(G)|/v$.  

Note that a decomposition $\{G_1,\ldots, G_a\}$ of $G$ is $1$-splittable  if and only if each graph $G_i$ is a spanning subgraph of $G$. We call such a decomposition a \textit{spanning decomposition}, and in the case of a cycle decomposition, we call it a \textit{Hamiltonian decomposition}, since every graph in the decomposition is a Hamiltonian cycle. Note that for any $a$, an $(a,1)$-splittable decomposition is also a spanning decomposition and an $a$-splittable decomposition $\{G_1, \ldots, G_a\}$ of $G$ with $a$ graphs is simply a splittable decomposition. We shall use each notion when convenient.

An $a$-splittable (resp. $(a,b)$-splittable) cycle decomposition of a graph $G$ is called \textit{$a$-DR-splittable} (resp. \textit{$(a,b)$-DR-splittable})   if in addition to the other conditions, for all cycles $C$ in the decomposition, if $S$ is the representing set for $C$, then $(C,S)$ is distance regular.

\begin{prop}\label{kwiggle}
The decomposition into cycles produced by the $k$-wiggle decomposition on a torus is $k$-DR-splittable. If $k$ is even, the decomposition is also $k/2$-DR-splittable.
\end{prop}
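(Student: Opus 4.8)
The plan is to treat the two claims separately, relying on the two structural facts already recorded for the $k$-wiggle decomposition of a torus $T=C\sq C'$: the cycles $C_1,\dots,C_k$ are vertical translates of one another, with the shift $\tau\colon (i,j)\mapsto(i+1,j)$ carrying $C_\ell$ onto $C_{\ell+1}$ (indices mod $k$), and every vertex lies in exactly two of the cycles, with consecutive indices $C_\ell$ and $C_{\ell+1}$. Throughout I will use that all $C_\ell$ have the same length $2|V(T)|/k$, which is even since $k\mid |V(C)|$.

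For the easier $(k/2)$-DR-splittable claim, suppose $k$ is even. Because the two cycles through any vertex have consecutive indices, each vertex lies in exactly one odd-indexed cycle and exactly one even-indexed cycle; hence the odd-indexed cycles are pairwise vertex-disjoint and cover $V(T)$, and likewise the even-indexed ones. I would take the two splitting sets $\F_1=\{C_\ell:\ell\text{ odd}\}$ and $\F_2=\{C_\ell:\ell\text{ even}\}$, each of size $k/2$, and for a cycle $C_\ell$ in either family let its representing set be all of $V(C_\ell)$. These representing sets are pairwise disjoint, of common size $2|V(T)|/k\ge 2$, and have union $V(T)$, so each family is a splittable decomposition of a spanning subgraph, and $(C_\ell,V(C_\ell))$ is trivially distance regular. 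This gives a $(k/2)$-DR-splittable decomposition with $a=k/2$ and $m=2$.

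For the $k$-DR-splittable claim we have $a=k$, $m=1$, so there is a single splitting set $\{C_1,\dots,C_k\}$ that must splittably decompose all of $T$; the task is to assign each shared vertex to one owner so that the representing sets are equal in size, partition $V(T)$, and are distance regular. Since $|V(C_\ell)|=2|V(T)|/k$ is even and each representing set must have size $|V(T)|/k$, distance regularity forces $S_\ell$ to be exactly every other vertex of $C_\ell$. I would fix a traversal of $C_1$, transport it to each $C_\ell$ via $\tau^{\ell-1}$ so that positions along the cycles are translation-consistent, and set $S_\ell$ to be the even-position vertices of $C_\ell$; distance regularity (all gaps equal to $2$) and the equal sizes $|V(T)|/k\ge 2$ are then immediate, and the only real content is that these sets partition $V(T)$. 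The crux is to read off from the edge formulas that in every row $j$ the vertical edge $(i,j)(i+1,j)$ lies in cycle $C_{(i-\sigma(j))\bmod k}$, where $\sigma(j)=0$ for $0\le j\le m-k-1$ and $\sigma(m-k+p)=p$ for $0\le p\le k-1$ (this is exactly the first two lines defining $E(C_\ell)$). As $i\mapsto (i-\sigma(j))\bmod k$ is injective on residues, the two vertical edges meeting at any vertex $v=(i,j)$, namely the down-edge $(i-1,j)(i,j)$ and the up-edge $(i,j)(i+1,j)$, lie in distinct consecutive cycles, and these are precisely the two cycles $C_a,C_{a+1}$ through $v$. The down-edge shows $v$ and $\tau^{-1}(v)=(i-1,j)$ are adjacent in $C_a$, so their positions there have opposite parity; translation-consistency gives $p_{a+1}(v)=p_a(\tau^{-1}(v))$, whence $v$ sits at an even position in exactly one of $C_a,C_{a+1}$, i.e.\ in exactly one $S_\ell$. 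This yields the desired partition and completes the splittable (hence DR-splittable) structure.

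I expect the main obstacle to be the bookkeeping in the top $k$ rows: verifying that the staircase portion of the wiggle genuinely obeys the shifted assignment $\sigma(m-k+p)=p$, and that the translation $\tau$ together with the chosen orientations line up there, so that the clean statement ``the two vertical edges at each vertex lie in different cycles'' holds across the whole torus rather than only in the bottom rows. Once this assignment is confirmed, everything else (distance regularity, equal representing-set sizes, the parity-based partition, and the entire even-$k$ case) follows formally.
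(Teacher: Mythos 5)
Your proof is correct and, in substance, it is the paper's proof: for the $k/2$ claim you use exactly the paper's splitting sets $\F_1,\F_2$ of odd- and even-indexed cycles with representing sets equal to the full vertex sets, and for the $k$ claim you construct exactly the paper's representing sets, namely every other vertex of $C_1$ transported to $C_\ell$ by the vertical translation $\tau^{\ell-1}$. The only difference is how the partition of $V(T)$ is checked: the paper observes that $S_1$ meets each vertical cycle (each fixed-$j$ copy of $C$, which you call a row) in a single residue class of heights modulo $k$, so that its $k$ translates partition $V(T)$, while you derive disjointness from the edge-ownership formula $C_{(i-\sigma(j))\bmod k}$ plus a parity-transport argument; these are two verifications of the same fact.

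One step of your argument does need a patch. The identity $p_{a+1}(v)=p_a(\tau^{-1}(v))$ is exact for $1\le a\le k-1$ but fails as stated at the wrap-around $a=k$, $a+1\equiv 1\pmod k$: there the two sides differ by $2m$ positions, because $\tau^{k}$ maps $C_1$ to itself but advances its traversal by one full period, which has length $2m$ (two vertices in each of the $m$ columns). Since $2m$ and the cycle length $2mn/k$ are both even, the identity still holds modulo $2$, and parity is all your argument uses --- but this evenness of the period is exactly the fact that must be verified. Note also that the wrap-around pair $(C_k,C_1)$ occurs at vertices in every column, namely wherever $i\equiv\sigma(j)+1\pmod k$, so the delicate point is not confined to the staircase columns as your closing remark suggests; it is the cyclic consistency of the transported traversals, not the staircase bookkeeping, that carries the real content.
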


\begin{proof}
Let $C_1, \ldots, C_{k}$ be the the cycles in the $k$-wiggle decomposition of a torus $T$.  For $1 \le \ell \le k$ we need to find representing sets $S_\ell \subseteq V(C_\ell)$, all of the same cardinality, partitioning $V(T)$ and splitting the cycles into paths of equal length. Let $S_1$ be the set consisting of every other vertex encountered as $C_1$ is being traversed in a given direction.  For $2 \le \ell \le k$, let $S_\ell$ be the vertical translation of $S_1$ by $\ell-1$, i.e., the vertex $(i,j) \in S_1$ if and only if the vertex $(i +\ell-1, j) \in S_\ell$. Note that every $k$th vertex in each vertical cycle is part of a given $S_\ell$, so these sets partition $V(T)$ and have the same cardinality. Further, since the cycles $C_1, \ldots, C_k$ are all vertical translations of each other, for all $\ell$,  $S_\ell$ is the set consisting of every other vertex of $V(C_\ell)$ encountered as $C_\ell$ is being traversed in a given direction. 
Thus every path in $C_\ell$ between consecutive elements of $S_\ell$ has length 2, and $(C_\ell,S_\ell)$ is distance regular, see Figure \ref{wiggle-DR} (left).   

Let $k$ be even. To show that the decomposition is $k/2$-DR-splittable, we need to partition the cycles into two splitting sets of $k/2$ cycles each and for each splitting set find representing sets of vertices in each cycle of the same cardinality, partitioning $V(T)$ and dividing the cycles into paths of equal length.
Let the first splitting set $\F_1$ contain the $k/2$ cycles with odd indices, $\F_1 = \{C_1, C_3, \ldots, C_{k-1}\}$, and the second splitting set $\F_2$ contain the $k/2$ cycles with even indices, $\F_2 = \{C_2, C_4, \ldots, C_k\}$. For each cycle $C_\ell$, let $S_\ell=V(C_\ell)$.  Then since every vertex in the torus is contained in one even-indexed cycle and one odd-indexed cycle, the representing sets in each splitting set partition $V(T)$ and every path in $C_\ell$ between consecutive elements of $S_\ell$ has length 1, see Figure \ref{wiggle-DR} (right).  
\end{proof}

Note that if the decomposition of a torus obtained by the $k$-wiggle decomposition is $a$-splittable, then $a$ must be $k$ or $k/2$: Each cycle covers exactly $2/k$ proportion of the vertices in the torus. Thus at least half of the $k$ cycles are required to cover all the vertices in the torus, which implies at least half of the cycles must be in each splitting set.

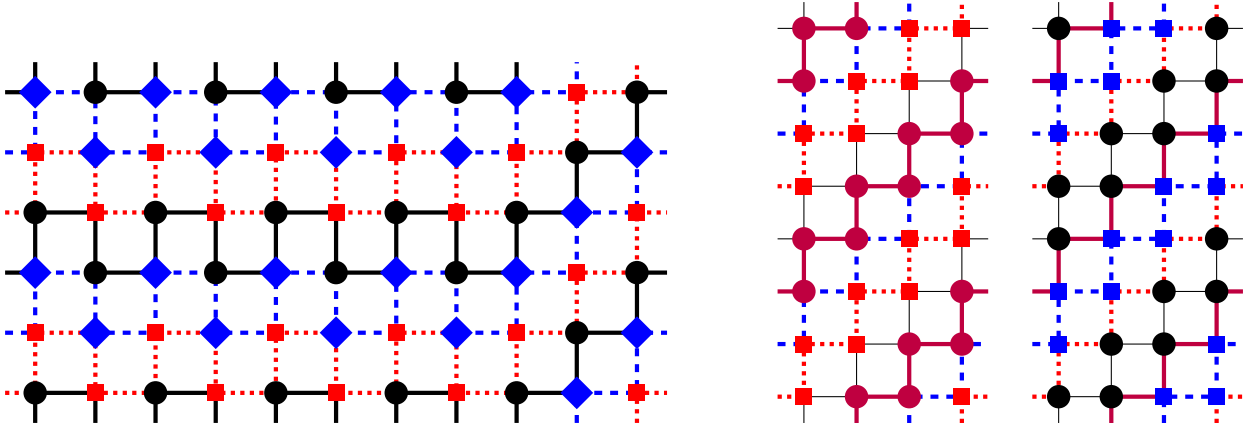
\begin{figure}
\begin{center}
\begin{tikzpicture}[scale=.8]  %Q2timesQ2

%    \foreach \x in {4, ..., 14}{
%    \draw[ultra thick] (\x,-.5) -- (\x,5.5);
%}
%    \foreach \y in {0, ..., 5}{
%    \draw[ultra thick] (3.5,\y) -- (14.5,\y);
%}

    \draw[ultra thick, red, dotted]  (3.5,0)-- (4,0) -- (4,1) -- (5,1) -- (5,0) -- (6,0) -- (6,1) -- (7,1) -- (7,0) -- (8,0) -- (8,1) -- (9,1) -- (9,0) -- (10,0) -- (10,1) -- (11,1) -- (11,0) -- (12,0) --(12,1) -- (13,1) -- (13,2) -- (14,2) -- (14,3) -- (14.5,3);
    \draw[ultra thick, red, dotted]  (3.5,3) -- (4,3) -- (4,4) -- (5,4) -- (5,3) -- (6,3) -- (6,4) -- (7,4) -- (7,3) -- (8,3) -- (8,4) -- (9,4) -- (9,3) -- (10,3) -- (10,4) -- (11,4) -- (11,3) -- (12,3) --(12,4) -- (13,4) -- (13,5) -- (14,5) -- (14,5) -- (14,5.5);
    \draw[ultra thick, red, dotted] (14,-.5) -- (14,0) -- (14.5,0);

    \draw[ultra thick, blue, dashed] (3.5,1) -- (4,1) -- (4,2) -- (5,2) -- (5,1) -- (6,1) -- (6,2) -- (7,2) -- (7,1) -- (8,1) -- (8,2) -- (9,2) -- (9,1) -- (10,1) -- (10,2) -- (11,2) -- (11,1) -- (12,1) --(12,2) -- (13,2) -- (13,3) -- (14,3) -- (14,4) -- (14.5,4);
    \draw[ultra thick, blue, dashed] (3.5,4) -- (4,4) -- (4,5) -- (5,5) -- (5,4) -- (6,4) -- (6,5) -- (7,5) -- (7,4) -- (8,4) -- (8,5) -- (9,5) -- (9,4) -- (10,4) -- (10,5) -- (11,5) -- (11,4) -- (12,4) --(12,5) -- (13,5) -- (13,5.5);
    \draw[ultra thick, blue, dashed] (13,-.5) -- (13,0) -- (14,0) --(14,1) -- (14.5,1);
    
       \draw[ultra thick]  (3.5,2)-- (4,2) -- (4,3) -- (5,3) -- (5,2) -- (6,2) -- (6,3) -- (7,3) -- (7,2) -- (8,2) -- (8,3) -- (9,3) -- (9,2) -- (10,2) -- (10,3) -- (11,3) -- (11,2) -- (12,2) --(12,3) -- (13,3) -- (13,4) -- (14,4) -- (14,5) -- (14.5,5);
              \draw[ultra thick]  (4,-.5)-- (4,0) -- (5,0) -- (5,-.5) ;
              \draw[ultra thick]  (6,-.5)-- (6,0) -- (7,0) -- (7,-.5) ;
              \draw[ultra thick]  (8,-.5)-- (8,0) -- (9,0) -- (9,-.5) ;
              \draw[ultra thick]  (10,-.5)-- (10,0) -- (11,0) -- (11,-.5) ;
    \draw[ultra thick] (12,-.5) -- (12,0) -- (13,0) -- (13,1) -- (14,1) -- (14,2) -- (14.5,2);
    
                  \draw[ultra thick]  (3.5,5)-- (4,5) -- (4,5.5);
              \draw[ultra thick]  (5,5.5)-- (5,5) -- (6,5) -- (6,5.5) ;
              \draw[ultra thick]  (7,5.5)-- (7,5) -- (8,5) -- (8,5.5) ;
        \draw[ultra thick]  (9,5.5)-- (9,5) -- (10,5) -- (10,5.5) ;
                \draw[ultra thick]  (11,5.5) -- (11,5) -- (12,5) -- (12,5.5) ;
              
%    \draw[ultra thick]  (3.5,3) -- (4,3) -- (4,4) -- (5,4) -- (5,3) -- (6,3) -- (6,4) -- (7,4) -- (7,3) -- (8,3) -- (8,4) -- (9,4) -- (9,3) -- (10,3) -- (10,4) -- (11,4) -- (11,3) -- (12,3) --(12,4) -- (13,4) -- (13,5) -- (14,5) -- (14,5) -- (14,5.5);

    \foreach \x in {5, 7, 9, 11, 14}{
    \foreach \y in {0, 3}{
      \node[style={rectangle ,fill=red},scale=.8] at (\x,\y){};% [label=left:$01$]{};
}
}

    \foreach \x in {4, 6, 8, 10, 12}{
    \foreach \y in {1, 4}{
      \node[style={rectangle,fill=red},scale=.8] at (\x,\y){};% [label=left:$01$]{};
}
}

    \foreach \x in {13}{
    \foreach \y in {2, 5}{
      \node[style={rectangle,fill=red},scale=.8] at (\x,\y){};% [label=left:$01$]{};
}
}

    \foreach \x in {5, 7, 9, 11, 14}{
    \foreach \y in {1, 4}{
      \node[style={diamond,fill=blue},scale=.8] at (\x,\y){};% [label=left:$01$]{};
}
}

    \foreach \x in {4, 6, 8, 10, 12}{
    \foreach \y in {2, 5}{
      \node[style={diamond, fill=blue},scale=.8] at (\x,\y){};% [label=left:$01$]{};
}
}

    \foreach \x in {13}{
    \foreach \y in {0, 3}{
      \node[style={diamond, fill=blue},scale=.8] at (\x,\y){};% [label=left:$01$]{};
}
}

    \foreach \x in {5, 7, 9, 11, 14}{
    \foreach \y in {2, 5}{
      \node[style={circle,fill=black},scale=.8] at (\x,\y){};% [label=left:$01$]{};
}
}

    \foreach \x in {4, 6, 8, 10, 12}{
    \foreach \y in {0, 3}{
      \node[style={circle,fill=black},scale=.8] at (\x,\y){};% [label=left:$01$]{};
}
}

    \foreach \x in {13}{
    \foreach \y in {1, 4}{
      \node[style={circle,fill=black},scale=.8] at (\x,\y){};% [label=left:$01$]{};
}
}

\end{tikzpicture}
\hfill
\begin{tikzpicture}[scale=.7]  %2wiggle subdivided

%    \foreach \x in {0,...,3}{
%    \draw[ultra thick] (\x,-.5) -- (\x,7.5);
%}
%    \foreach \y in {0,...,7}{
%    \draw[ultra thick] (-.5,\y) -- (3.5,\y);
%}

    \draw[ultra thick, red, dotted] (-.5,0) -- (0,0) -- (0,1) -- (1,1) -- (1,2) -- (2,2) -- (2,3) -- (3,3) -- (3,4) -- (3.5,4);
    \draw[ultra thick, red, dotted]  (-.5,4) -- (0,4) -- (0,5) -- (1,5) -- (1,6) -- (2,6) -- (2,7) -- (3,7) -- (3,7.5);
   \draw[ultra thick, red, dotted] (3,-.5) -- (3,0) -- (3.5,0);

    \draw[ultra thick, blue, dashed] (-.5,1) -- (0,1) -- (0,2) -- (1,2) -- (1,3) -- (2,3) -- (2,4) -- (3,4) -- (3,5) -- (3.5,5);
    \draw[ultra thick, blue, dashed]  (-.5,5) -- (0,5) -- (0,6) -- (1,6) -- (1,7) -- (2,7) -- (2,7.5);
   \draw[ultra thick, blue, dashed] (2,-.5) -- (2,0) -- (3,0) -- (3,1) -- (3.5,1);

    \draw[ultra thick, purple] (-.5,2) -- (0,2) -- (0,3) -- (1,3) -- (1,4) -- (2,4) -- (2,5) -- (3,5) -- (3,6) -- (3.5,6);
    \draw[ultra thick, purple]  (-.5,6) -- (0,6) -- (0,7) -- (1,7) -- (1,7.5);
   \draw[ultra thick, purple] (1,-.5) -- (1,0) -- (2,0) -- (2,1) -- (3,1) -- (3,2) --(3.5,2);

    \draw (-.5,3) -- (0,3) -- (0,4) -- (1,4) -- (1,5) -- (2,5) -- (2,6) -- (3,6) -- (3,7) -- (3.5,7);
    \draw  (-.5,7) -- (0,7) -- (0,7.5);
   \draw (0,-.5) -- (0,0) -- (1,0) -- (1,1) -- (2,1) -- (2,2) --(3,2) -- (3,3) -- (3.5,3);

    \foreach \x in {0,...,2}{
      \node[style={rectangle,fill=red},scale=.8] at (\x,\x){};% [label=left:$01$]{};
      \node[style={rectangle,fill=red},scale=.8] at (\x,\x+1){};% [label=left:$01$]{};
      \node[style={rectangle,fill=red},scale=.8] at (\x,\x+4){};% [label=left:$01$]{};
      \node[style={rectangle,fill=red},scale=.8] at (\x,\x+5){};% [label=left:$01$]{};
}

    \foreach \y in {0,3,4,7}{
      \node[style={rectangle,fill=red},scale=.8] at (3,\y){};% [label=left:$01$]{};
}

    \foreach \y in {2,3,6,7}{
      \node[style={circle,fill=purple},scale=.8] at (0,\y){};% [label=left:$01$]{};
}
    \foreach \y in {0, 3, 4, 7}{
      \node[style={circle,fill=purple},scale=.8] at (1,\y){};% [label=left:$01$]{};
}
    \foreach \y in {0, 1, 4, 5}{
      \node[style={circle,fill=purple},scale=.8] at (2,\y){};% [label=left:$01$]{};
}

    \foreach \y in {1, 2, 5, 6}{
      \node[style={circle,fill=purple},scale=.8] at (3,\y){};% [label=left:$01$]{};
}

\end{tikzpicture}
\hspace{.1in}
\begin{tikzpicture}[scale=.7]  %2wiggle subdivided

    \draw[ultra thick, red, dotted] (-.5,0) -- (0,0) -- (0,1) -- (1,1) -- (1,2) -- (2,2) -- (2,3) -- (3,3) -- (3,4) -- (3.5,4);
    \draw[ultra thick, red, dotted]  (-.5,4) -- (0,4) -- (0,5) -- (1,5) -- (1,6) -- (2,6) -- (2,7) -- (3,7) -- (3,7.5);
   \draw[ultra thick, red, dotted] (3,-.5) -- (3,0) -- (3.5,0);

    \draw[ultra thick, blue, dashed] (-.5,1) -- (0,1) -- (0,2) -- (1,2) -- (1,3) -- (2,3) -- (2,4) -- (3,4) -- (3,5) -- (3.5,5);
    \draw[ultra thick, blue, dashed]  (-.5,5) -- (0,5) -- (0,6) -- (1,6) -- (1,7) -- (2,7) -- (2,7.5);
   \draw[ultra thick, blue, dashed] (2,-.5) -- (2,0) -- (3,0) -- (3,1) -- (3.5,1);

    \draw[ultra thick, purple] (-.5,2) -- (0,2) -- (0,3) -- (1,3) -- (1,4) -- (2,4) -- (2,5) -- (3,5) -- (3,6) -- (3.5,6);
    \draw[ultra thick, purple]  (-.5,6) -- (0,6) -- (0,7) -- (1,7) -- (1,7.5);
   \draw[ultra thick, purple] (1,-.5) -- (1,0) -- (2,0) -- (2,1) -- (3,1) -- (3,2) --(3.5,2);

    \draw (-.5,3) -- (0,3) -- (0,4) -- (1,4) -- (1,5) -- (2,5) -- (2,6) -- (3,6) -- (3,7) -- (3.5,7);
    \draw  (-.5,7) -- (0,7) -- (0,7.5);
   \draw (0,-.5) -- (0,0) -- (1,0) -- (1,1) -- (2,1) -- (2,2) --(3,2) -- (3,3) -- (3.5,3);

%    \foreach \x in {0,...,3}{
%    \foreach \y in {0, ...,7}{
%      \node[style={circle,fill=black},scale=.8] at (\x,\y){};% [label=left:$01$]{};
%}
%}

    \foreach \y in {1, 2, 5, 6}{
      \node[style={rectangle,fill=blue},scale=.8] at (0,\y){};% [label=left:$01$]{};
}
    \foreach \y in {2, 3, 6, 7}{
      \node[style={rectangle,fill=blue},scale=.8] at (1,\y){};% [label=left:$01$]{};
}
    \foreach \y in {0, 3, 4, 7}{
      \node[style={rectangle,fill=blue},scale=.8] at (2,\y){};% [label=left:$01$]{};
}
    \foreach \y in {0, 1, 4, 5}{
      \node[style={rectangle,fill=blue},scale=.8] at (3,\y){};% [label=left:$01$]{};
}

    \foreach \y in {0, 3, 4, 7}{
      \node[style={circle,fill=black},scale=.8] at (0,\y){};% [label=left:$01$]{};
}
    \foreach \y in {0, 1, 4, 5}{
      \node[style={circle,fill=black},scale=.8] at (1,\y){};% [label=left:$01$]{};
}
    \foreach \y in {1, 2, 5, 6}{
      \node[style={circle,fill=black},scale=.8] at (2,\y){};% [label=left:$01$]{};
}
    \foreach \y in {2, 3, 6, 7}{
      \node[style={circle,fill=black},scale=.8] at (3,\y){};% [label=left:$01$]{};
}

\end{tikzpicture}

\end{center}
\caption{Left: A 3-DR-splittable cycle decomposition of $C_6 \sq C_{11}$ produced by the 3-wiggle decomposition. There is one splitting set, $\F_1$, containing all three cycles (dotted red, dashed blue, and black).  The red square vertices, blue diamond vertices, and black circular vertices are the representing sets for the cycles with the corresponding color.  Right: A 2-DR-splittable cycle decomposition of $C_8 \sq C_4$ produced by the 4-wiggle decomposition.  There are two splitting sets: $\F_1$ contains the dotted red and thick purple cycles, and the representing sets for these cycles are the square red and circular purple vertices illustrated on the left, while $\F_2$ contains the dashed blue and thin black cycles, and the representing sets for these cycles are the square blue and circular black vertices illustrated on the right.}\label{wiggle-DR}
\end{figure}

\begin{prop}\label{dregprod}
Suppose the torus $C \sq C'$ allows the $k$-wiggle decomposition and there is a set $S' \subseteq V(C')$ such that $(C',S')$ is distance regular.  Then there are sets $S_1, \ldots, S_k$, each of the same cardinality, partitioning $V(C) \times S'$ such that for the cycles $C_1, \ldots, C_k$ produced by the $k$-wiggle decomposition on $C \sq C'$, for $1 \le \ell \le k$, $(C_\ell, S_\ell)$ is distance regular.
\end{prop}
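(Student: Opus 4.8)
The plan is to build the sets $S_\ell$ by restricting the representing sets already produced in Proposition~\ref{kwiggle} to the chosen columns. Writing $C_1,\dots,C_k$ for the cycles of the $k$-wiggle decomposition of $C\sq C'$, let $S_\ell^{\mathrm{full}}\subseteq V(C_\ell)$ denote the representing set from Proposition~\ref{kwiggle}, namely every other vertex of $C_\ell$; recall these partition $V(C\sq C')$, are vertical translates of one another, and satisfy that $(C_\ell,S_\ell^{\mathrm{full}})$ is distance regular with all gaps equal to $2$. I would then set
\[
S_\ell \;=\; S_\ell^{\mathrm{full}}\cap\bigl(V(C)\times S'\bigr),\qquad 1\le \ell\le k .
\]
Since the $S_\ell^{\mathrm{full}}$ partition $V(C\sq C')$ and $V(C)\times S'$ is a union of full columns, the $S_\ell$ partition $V(C)\times S'$. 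Because $V(C)\times S'$ is invariant under vertical translation and the $S_\ell^{\mathrm{full}}$ are vertical translates of one another, the $S_\ell$ are vertical translates as well, hence all have the same cardinality (namely $n|S'|/k$). This settles every requirement except distance regularity, which is the whole point of the proposition.

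For distance regularity I would isolate one structural fact about the wiggle cycles: traversed in its natural direction, \emph{each cycle $C_\ell$ alternates strictly between horizontal and vertical edges, and every horizontal edge increases the $C'$-coordinate by exactly $1$ modulo $m$}. Granting this, consider two consecutive vertices of $S_\ell^{\mathrm{full}}$ along $C_\ell$: they are joined by a path of length $2$, which by the alternation contains exactly one horizontal edge, so their columns differ by exactly $1\pmod m$. Hence, as $C_\ell$ is traversed, the columns of the successive members of $S_\ell^{\mathrm{full}}$ run monotonically through $0,1,\dots,m-1$ and repeat, completing $n/k$ full sweeps before closing up (the total number $nm/k$ of such members advances the column by a multiple of $m$ precisely because $k\mid n$). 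Now $(C',S')$ distance regular means the columns in $S'$ are equally spaced in $C'$, at gaps $g=m/|S'|$. Consequently the members of $S_\ell=S_\ell^{\mathrm{full}}\cap(V(C)\times S')$ occur at exactly every $g$-th member of $S_\ell^{\mathrm{full}}$ along $C_\ell$, and since consecutive members of $S_\ell^{\mathrm{full}}$ are at distance $2$, the members of $S_\ell$ are equally spaced along $C_\ell$ with common gap $2g$. Thus $(C_\ell,S_\ell)$ is distance regular, as required.

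The main work — and the only delicate point — is verifying the italicized structural fact, in particular the strict alternation of edge types through the staircase columns $j=m-k,\dots,m-1$ and across the wrap-around between successive vertical bands. In the bulk columns $0\le j\le m-k-1$ the alternation is immediate from edge types (1), (3), (4): following $C_\ell$ one sees the repeating pattern vertical, horizontal, vertical, horizontal, with every horizontal step moving one column to the right. The remaining cases are the staircase (edge types (2) and (5), where the cycle climbs $k$ rows while still taking one horizontal and one vertical edge per column) and the two transitions bulk$\to$staircase and staircase$\to$bulk; here I would check directly from the edge definitions that the alternation and the rightward advance of horizontal edges persist, and that the cyclic closure contributes the single ``wrap'' horizontal edge from column $m-1$ back to column $0$. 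Once strict global alternation is in hand, the fact that every other vertex of $C_\ell$ sits at a consistent phase forces each length-$2$ step to cross exactly one horizontal edge, which is all the argument above requires.
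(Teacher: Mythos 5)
Your proposal is correct and follows essentially the same route as the paper: the paper also builds the representing sets by the every-other-vertex rule (it takes $S_1$ to be every other vertex of $(V(C)\times S')\cap V(C_1)$ encountered along $C_1$ and lets the remaining $S_\ell$ be vertical translates, which are exactly your sets $S_\ell^{\mathrm{full}}\cap (V(C)\times S')$, since along each wiggle cycle the column coordinate advances by one for every two edges traversed). The paper likewise deduces distance regularity from the very structural fact you isolate --- each path of $C_\ell$ between consecutive representatives is twice the corresponding path of $C'$ between consecutive elements of $S'$ --- and, like you, it leaves the staircase/wrap-around verification of that fact to inspection of the edge definitions.
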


\begin{proof}
Let $S_1$ be the set consisting of every other vertex of $(V(C) \times S')\cap V(C_1)$ encountered as $C_1$ is being traversed in a given direction.  For $2 \le \ell \le k$, let $S_\ell$ be the vertical translation of $S_1$ by $\ell-1$, i.e., the vertex $(i,j) \in S_1$ if and only if the vertex $(i+\ell-1, j) \in S_\ell$. Note that every $k$th vertex in each vertical cycle is part of a given $S_\ell$, so these sets partition $V(C) \times S'$ and have the same cardinality, and for all $\ell$,  $S_\ell$ is the set consisting of every other vertex of $(V(C) \times S')\cap V(C_\ell)$ encountered as $C_\ell$ is being traversed in a given direction. 
Thus every path in $C_\ell$ between consecutive elements of $S_\ell$ is twice as long as the corresponding path in the horizontal cycle $C'$ between consecutive elements of $S'$, and $(C_\ell,S_\ell)$ is distance regular if and only if $(C', S')$ is. See Figure~\ref{prod}.
\end{proof}

\begin{figure}
\begin{center}

\begin{tikzpicture}[scale=.8]  %Q2timesQ2

%    \foreach \x in {0, ..., 14}{
%    \draw[ultra thick] (\x,-.5) -- (\x,5.5);
%}
%    \foreach \y in {0, ..., 5}{
%    \draw[ultra thick] (-.5,\y) -- (14.5,\y);
%}

    \draw[ultra thick, dotted, red] (-.5,0) -- (0,0) -- (0,1) -- (1,1) -- (1,0) -- (2,0) -- (2,1) -- (3,1) -- (3,0) -- (4,0) -- (4,1) -- (5,1) -- (5,0) -- (6,0) -- (6,1) -- (7,1) -- (7,0) -- (8,0) -- (8,1) -- (9,1) -- (9,0) -- (10,0) -- (10,1) -- (11,1) -- (11,0) -- (12,0) --(12,1) -- (13,1) -- (13,2) -- (14,2) -- (14,3) -- (14.5,3);
    \draw[ultra thick, dotted, red] (-.5,3) -- (0,3) -- (0,4) -- (1,4) -- (1,3) -- (2,3) -- (2,4) -- (3,4) -- (3,3) -- (4,3) -- (4,4) -- (5,4) -- (5,3) -- (6,3) -- (6,4) -- (7,4) -- (7,3) -- (8,3) -- (8,4) -- (9,4) -- (9,3) -- (10,3) -- (10,4) -- (11,4) -- (11,3) -- (12,3) --(12,4) -- (13,4) -- (13,5) -- (14,5) -- (14,5) -- (14,5.5);
    \draw[ultra thick, dotted, red] (14,-.5) -- (14,0) -- (14.5,0);

    \draw[ultra thick, dashed, blue] (-.5,1) -- (0,1) -- (0,2) -- (1,2) -- (1,1) -- (2,1) -- (2,2) -- (3,2) -- (3,1) -- (4,1) -- (4,2) -- (5,2) -- (5,1) -- (6,1) -- (6,2) -- (7,2) -- (7,1) -- (8,1) -- (8,2) -- (9,2) -- (9,1) -- (10,1) -- (10,2) -- (11,2) -- (11,1) -- (12,1) --(12,2) -- (13,2) -- (13,3) -- (14,3) -- (14,4) -- (14.5,4);
    \draw[ultra thick, dashed , blue] (-.5,4) -- (0,4) -- (0,5) -- (1,5) -- (1,4) -- (2,4) -- (2,5) -- (3,5) -- (3,4) -- (4,4) -- (4,5) -- (5,5) -- (5,4) -- (6,4) -- (6,5) -- (7,5) -- (7,4) -- (8,4) -- (8,5) -- (9,5) -- (9,4) -- (10,4) -- (10,5) -- (11,5) -- (11,4) -- (12,4) --(12,5) -- (13,5) -- (13,5.5);
    \draw[ultra thick, dashed, blue] (13,-.5) -- (13,0) -- (14,0) --(14,1) -- (14.5,1);
    
        \draw[ultra thick] (-.5,2) -- (0,2) -- (0,3) -- (1,3) -- (1,2) -- (2,2) -- (2,3) -- (3,3) -- (3,2) -- (4,2) -- (4,3) -- (5,3) -- (5,2) -- (6,2) -- (6,3) -- (7,3) -- (7,2) -- (8,2) -- (8,3) -- (9,3) -- (9,2) -- (10,2) -- (10,3) -- (11,3) -- (11,2) -- (12,2) --(12,3) -- (13,3) -- (13,4) -- (14,4) -- (14,5) -- (14.5,5);

    \draw[ultra thick] (0,-.5) -- (0,0) -- (1,0) --(1, -.5) ;
    \draw[ultra thick] (2,-.5) -- (2,0) -- (3,0) --(3, -.5) ;
        \draw[ultra thick] (4,-.5) -- (4,0) -- (5,0) --(5, -.5) ;
            \draw[ultra thick] (6,-.5) -- (6,0) -- (7,0) --(7, -.5) ;
                \draw[ultra thick] (8,-.5) -- (8,0) -- (9,0) --(9, -.5) ;
                \draw[ultra thick] (10,-.5) -- (10,0) -- (11,0) --(11, -.5) ;
                \draw[ultra thick] (12,-.5) -- (12,0) -- (13,0) --(13,1) --(14, 1) -- (14,2) -- (14.5,2) ;

 \draw[ultra thick] (-.5,5) -- (0,5) -- (0,5.5);
  \draw[ultra thick] (1,5.5) -- (1,5) -- (2,5) -- (2,5.5);
   \draw[ultra thick] (3,5.5) -- (3,5) -- (4,5) -- (4,5.5);
   \draw[ultra thick] (5,5.5) -- (5,5) -- (6,5) -- (6,5.5);
   \draw[ultra thick] (7,5.5) -- (7,5) -- (8,5) -- (8,5.5);
     \draw[ultra thick] (9,5.5) -- (9,5) -- (10,5) -- (10,5.5);
     \draw[ultra thick] (11,5.5) -- (11,5) -- (12,5) -- (12,5.5);

    \foreach \x in { 0, 1, 2, 4 ,5 ,6, 7, 9, 10, 11, 12,  14}{
    \foreach \y in {0, ..., 5}{
      \node[style={circle, draw, fill=white},scale=.8] at (\x,\y){};% [label=left:$01$]{};
}
}

    \foreach \x in {3}{
    \foreach \y in {0, 3}{
      \node[style={rectangle,fill=red},scale=.8] at (\x,\y){};% [label=left:$01$]{};
}
}

    \foreach \x in {8}{
    \foreach \y in {1, 4}{
      \node[style={rectangle,fill=red},scale=.8] at (\x,\y){};% [label=left:$01$]{};
}
}

    \foreach \x in {13}{
    \foreach \y in {2, 5}{
      \node[style={rectangle,fill=red},scale=.8] at (\x,\y){};% [label=left:$01$]{};
}
}

    \foreach \x in {3}{
    \foreach \y in {1, 4}{
      \node[style={diamond,fill=blue},scale=.8] at (\x,\y){};% [label=left:$01$]{};
}
}

    \foreach \x in {8}{
    \foreach \y in {2, 5}{
      \node[style={diamond,fill=blue},scale=.8] at (\x,\y){};% [label=left:$01$]{};
}
}

    \foreach \x in {13}{
    \foreach \y in {0, 3}{
      \node[style={diamond,fill=blue},scale=.8] at (\x,\y){};% [label=left:$01$]{};
}
}

    \foreach \x in {3}{
    \foreach \y in {2, 5}{
      \node[style={circle,fill=black},scale=.8] at (\x,\y){};% [label=left:$01$]{};
}
}

    \foreach \x in {8}{
    \foreach \y in {0, 3}{
      \node[style={circle,fill=black},scale=.8] at (\x,\y){};% [label=left:$01$]{};
}
}

    \foreach \x in {13}{
    \foreach \y in {1, 4}{
      \node[style={circle,fill=black},scale=.8] at (\x,\y){};% [label=left:$01$]{};
}
}

    \draw[ultra thick] (-.5,-2) -- (14.5,-2);

    \foreach \x in {0,...,14}{
      \node[style={circle,draw, fill=white},scale=.8] at (\x,-2){};% [label=left:$01$]{};
}

    \foreach \x in {3, 8, 13}{
      \node[style={circle,fill=black},scale=.8] at (\x,-2){};% [label=left:$01$]{};
}

      \node (1) at (7,-2) [label=below:{$(C', S')$}]{};

\end{tikzpicture}

\end{center}
\caption{An example for Proposition~\ref{dregprod}:  The 3-wiggle is applied to $C \sq C' = C_6 \sq C_{15}$,  to produce a decomposition into three cycles $C_1$, $C_2$, and $C_3$ (dotted red, dashed blue, and black). In this example $S'=\{3, 8, 13\}$, and the vertices in $C \times S'$ are partitioned into sets $S_1$, $S_2$, and $S_3$ (red squares, blue diamonds, black circles). Note that each path in $C'$ between consecutive elements of $S'$ has length 5, and each path in $C_i$ between consecutive vertices in $S_i$ has length 10.}\label{prod}
\end{figure}
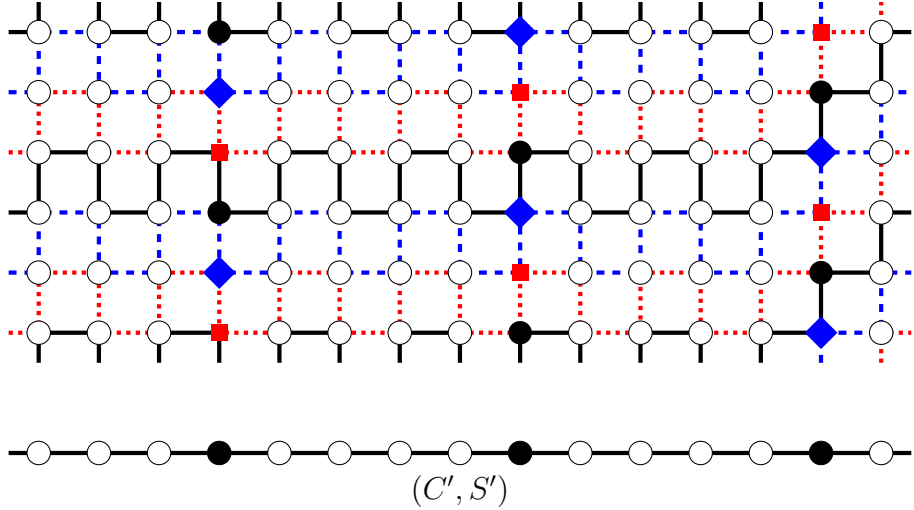

%%%%%%%%%%%%%%%%%%%%%%%%%%
%%%%%%%%%%%%%%%%%%%%%%%%%%
%%%                                                      %%%%%%
%%%            Proposition 6                   %%%%%%
%%%                                                       %%%%%%
%%%%%%%%%%%%%%%%%%%%%%%%%%
%%%%%%%%%%%%%%%%%%%%%%%%%%

\begin{prop}\label{subsplit}
Suppose the subdivided torus $(C,S) \anc (C',S')$ allows the $k$-wiggle decomposition, $(C,S)$ is distance regular, and  $C_1, \ldots, C_k$ are the cycles produced by the $k$-wiggle decomposition on $(C,S) \anc (C',S')$. Then there are sets $S_1, \ldots, S_k$, each of the same cardinality, partitioning $V(C) \times S'$ such that for $1 \le \ell \le k$, $S_\ell \subseteq V(C_\ell)$. 
\end{prop}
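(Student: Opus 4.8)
The plan is to read off the sets $S_\ell$ directly from the column structure of $F=(C,S)\anc(C',S')$, using distance regularity only to guarantee that the resulting sets have equal size. First I would identify $V(C)\times S'$ concretely: these are exactly the vertices lying on the vertical cycles induced by $V(C)\times\{v\}$ for $v\in S'$, all of whose edges survive in the anchored product precisely because $v\in S'$. Every vertex of $V(C)\times S'$ is thus either a degree-four vertex of $F$ (one in $S\times S'$, i.e.\ a vertex of the underlying torus $T$) or an internal subdivision vertex of a subdivided vertical edge; no horizontal subdivision vertex occurs, since those sit in columns indexed by $V(C')\setminus S'$.

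Next I would cut each such vertical cycle into blocks. In the column over $v\in S'$, the $|S|$ degree-four vertices split the cycle into $|S|$ paths, and since $(C,S)$ is distance regular these paths all have the same length $d:=|V(C)|/|S|$, hence each carries exactly $d-1$ internal subdivision vertices. By the definition of the $k$-wiggle decomposition on a subdivided torus, every edge obtained by subdividing a single edge of $T$ lies in the same cycle $C_\ell$; therefore the $d-1$ subdivision vertices of a path, together with one of its two degree-four endpoints (say the first one encountered in a fixed traversal direction), form a block of exactly $d$ vertices all belonging to a single cycle $C_\ell$.

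I would then let $S_\ell$ be the union over $v\in S'$ of the blocks lying in $C_\ell$. The cycle to which a block belongs depends only on the position of its path within the column, and as one traverses a column the wiggle assignment runs through $C_1,\dots,C_k$ in a fixed cyclic order (shifted by a constant in the staircase columns), so each cycle receives exactly $|S|/k$ blocks per column; here $k \mid |S|$ holds because the underlying torus allows the $k$-wiggle decomposition. Consequently $|S_\ell|=|S'|\cdot(|S|/k)\cdot d=|V(C)|\,|S'|/k$ is the same for all $\ell$, and the blocks partition $V(C)\times S'$ since each vertex lies in exactly one block. By construction $S_\ell\subseteq V(C_\ell)$, as required.

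The main obstacle I anticipate is bookkeeping rather than substance: verifying that an entire block lies in one cycle and that the blocks cycle uniformly through $C_1,\dots,C_k$ within each column, including the staircase columns $j=m-k+p$ where the $k$-wiggle shifts the vertical-edge assignment by $p$. Since this shift is constant along a column, it merely permutes the blocks among the cycles without changing the per-column count $|S|/k$, so the equal-cardinality conclusion is unaffected; this is also the one place where distance regularity of $(C,S)$ is essential, as it is what forces all blocks to have the common size $d$.
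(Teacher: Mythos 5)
Your proof is correct and follows essentially the same route as the paper's: the paper likewise assigns each degree-two vertex of $V(C)\times S'$ to the unique cycle containing it (using distance regularity and the fact that each cycle gets every $k$th path in each vertical cycle to equalize the counts) and then distributes the degree-four vertices in the alternating pattern of Propositions~\ref{kwiggle} and \ref{dregprod}, which is exactly the assignment your blocks (path interiors together with the first endpoint in a fixed traversal direction) produce. The difference is only presentational: your per-column block counting makes the equal-cardinality and partition checks explicit, including in the staircase columns, where the paper simply appeals to the pattern of the earlier propositions.
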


\begin{proof}
All degree two vertices in the subdivided torus that are in $V(C) \times S'$ lie on only one $C_\ell$, and so go in the corresponding $S_\ell$. The fact that $(C,S)$ is distance regular and each cycle contains every $k$th path in each vertical cycle guarantees that there are the same number of each of these in each $S_\ell$. It remains to assign the degree four vertices in $V(C) \times S'$, so we ignore the degree two vertices, and consider the underlying torus, with vertex set $S \times S'$. We assign the vertices of the underlying torus to $S_1, \ldots, S_k$ in the alternating pattern of Propositions~\ref{kwiggle} and \ref{dregprod}, so that every other degree 4 vertex on a given cycle $C_\ell$ is in its corresponding $S_\ell$. See Figure~\ref{2wiggle}.
\end{proof}

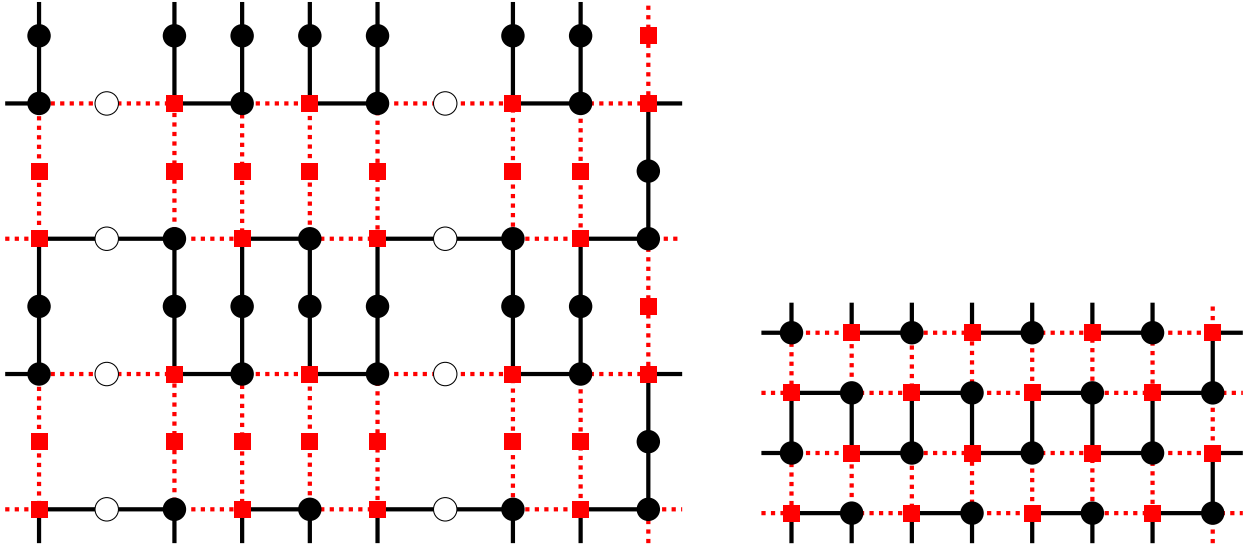
\begin{figure}
\begin{center}
\begin{tikzpicture}[scale=.9]  %2wiggle subdivided

%    \foreach \x in {0, 2, 3, 4, 5, 7, 8, 9}{
%    \draw[ultra thick] (\x,-.5) -- (\x,7.5);
%}
%    \foreach \y in {0, 2, 4, 6}{
%    \draw[ultra thick] (-.5,\y) -- (9.5,\y);
%}

    \draw[ultra thick, dotted, red] (-.5,0) -- (0,0) -- (0,2) -- (2,2) -- (2,0) -- (3,0) -- (3,2) -- (4,2) -- (4,0) -- (5,0) -- (5,2) -- (7,2) -- (7,0) -- (8,0) -- (8,2) -- (9,2) -- (9,4) --(9.5,4);
    \draw[ultra thick, dotted, red] (-.5,4) -- (0,4) -- (0,6) -- (2,6) -- (2,4) -- (3,4) -- (3,6) -- (4,6) -- (4,4) -- (5,4) -- (5,6) -- (7,6) -- (7,4) -- (8,4) -- (8,6) -- (9,6) -- (9,7.5); 
%    \draw[ultra thick, red] (-.5,8) -- (0,8) -- (0,6) -- (2,6) -- (2,5) -- (3,5) -- (3,6) -- (4,6) -- (4,5) -- (5,5) -- (5,6) -- (7,6) -- (7,5) -- (8,5) -- (8,6) -- (9,6) -- (9,7.5); 
   \draw[ultra thick, dotted, red] (9,-.5) -- (9,0) -- (9.5,0);

       \draw[ultra thick] (-.5,2) -- (0,2) -- (0,4) -- (2,4) -- (2,2) -- (3,2) -- (3,4) -- (4,4) -- (4,2) -- (5,2) -- (5,4) -- (7,4) -- (7,2) -- (8,2) -- (8,4) -- (9,4) -- (9,6) --(9.5,6);
   \draw[ultra thick] (-.5,6) -- (0,6) -- (0, 7.5);
   \draw[ultra thick] (2,7.5) -- (2,6) -- (3,6) -- (3,7.5) ;
   \draw[ultra thick] (4,7.5) -- (4,6) -- (5,6) -- (5,7.5) ;
   \draw[ultra thick] (7,7.5) -- (7,6) -- (8,6) -- (8,7.5) ;
   \draw[ultra thick] (0,-.5) -- (0,0) -- (2,0) -- (2,-.5) ;
   \draw[ultra thick] (3,-.5) -- (3,0) -- (4,0) -- (4,-.5) ;
   \draw[ultra thick] (5,-.5) -- (5,0) -- (7,0) -- (7,-.5) ;
   \draw[ultra thick] (8,-.5) -- (8,0) -- (9,0) -- (9,2) --(9.5,2);

    \foreach \x in {0,  3,  5, 8}{
    \foreach \y in {2, 3, 6, 7}{
      \node[style={circle,fill=black},scale=.8] at (\x,\y){};% [label=left:$01$]{};
}
}

    \foreach \x in {2, 4, 7}{
    \foreach \y in {0, 3, 4, 7}{
      \node[style={circle,fill=black},scale=.8] at (\x,\y){};% [label=left:$01$]{};
}
}

    \foreach \x in {9}{
    \foreach \y in {0, 1, 4, 5}{
      \node[style={circle,fill=black},scale=.8] at (\x,\y){};% [label=left:$01$]{};
}
}

    \foreach \x in {0,  3,  5,  8}{
    \foreach \y in {0, 1, 4, 5}{
      \node[style={rectangle,fill=red},scale=.8] at (\x,\y){};% [label=left:$01$]{};
}
}

    \foreach \x in {2, 4, 7}{
    \foreach \y in {1, 2, 5, 6}{
      \node[style={rectangle,fill=red},scale=.8] at (\x,\y){};% [label=left:$01$]{};
}
}

    \foreach \x in {9}{
    \foreach \y in {2,3, 6, 7}{
      \node[style={rectangle,fill=red},scale=.8] at (\x,\y){};% [label=left:$01$]{};
}
}

    \foreach \x in {1, 6}{
    \foreach \y in {0, 2, 4, 6}{
      \node[style={circle,draw, fill=white},scale=.8] at (\x,\y){};% [label=left:$01$]{};
}
}

\end{tikzpicture}
\hfill
\begin{tikzpicture}[scale=.8]  %2wiggle subdivided

    \draw[ultra thick, dotted, red] (-.5,0) -- (0,0) -- (0,1) -- (1,1) -- (1,0) -- (2,0) -- (2,1) -- (3,1) -- (3,0) -- (4,0) -- (4,1) -- (5,1) -- (5,0) -- (6,0) -- (6,1) --(7,1) -- (7,2) --(7.5,2);
    \draw[ultra thick, dotted, red] (-.5,2) -- (0,2) -- (0,3) -- (1,3) -- (1,2) -- (2,2) -- (2,3) -- (3,3) -- (3,2) -- (4,2) -- (4,3) -- (5,3) -- (5,2) -- (6,2) -- (6,3) --(7,3) -- (7,3.5);
%    \draw[ultra thick, red] (-.5,4) -- (0,4) -- (0,5) -- (1,5) -- (1,4) -- (2,4) -- (2,5) -- (3,5) -- (3,4) -- (4,4) -- (4,5) -- (5,5) --(5,4) -- (6,4) -- (6,5) --(7,5) -- (7,5.5);
   \draw[ultra thick, dotted, red] (7,-.5) -- (7,0) -- (7.5,0);

    \draw[ultra thick] (-.5,1) -- (0,1) -- (0,2) -- (1,2) -- (1,1) -- (2,1) -- (2,2) -- (3,2) -- (3,1) -- (4,1) -- (4,2) -- (5,2) -- (5,1) -- (6,1) -- (6,2) --(7,2) -- (7,3) --(7.5,3);
   \draw[ultra thick] (6,-.5) -- (6,0) -- (7,0) -- (7,1) -- (7.5,1);
   \draw[ultra thick] (0,-.5) -- (0,0) -- (1,0) -- (1,-.5);
   \draw[ultra thick] (2,-.5) -- (2,0) -- (3,0) -- (3,-.5);
   \draw[ultra thick] (4,-.5) -- (4,0) -- (5,0) -- (5,-.5);

   \draw[ultra thick] (-.5,3) -- (0,3) -- (0,3.5);
   \draw[ultra thick] (1,3.5) -- (1,3) -- (2,3) -- (2,3.5);
   \draw[ultra thick] (3,3.5) -- (3,3) -- (4,3) -- (4,3.5);
   \draw[ultra thick] (5,3.5) -- (5,3) -- (6,3) -- (6,3.5);

    \foreach \x in {0, 2, 4, 6}{
    \foreach \y in {0, 2}{
      \node[style={rectangle,fill=red},scale=.8] at (\x,\y){};% [label=left:$01$]{};
}
}

    \foreach \x in {0, 2, 4, 6}{
    \foreach \y in {1, 3}{
      \node[style={circle,fill=black},scale=.8] at (\x,\y){};% [label=left:$01$]{};
}
}

    \foreach \x in {1, 3, 5, 7}{
    \foreach \y in {0, 2}{
      \node[style={circle,fill=black},scale=.8] at (\x,\y){};% [label=left:$01$]{};
}
}

    \foreach \x in {1, 3, 5, 7}{
    \foreach \y in {1, 3}{
      \node[style={rectangle,fill=red},scale=.8] at (\x,\y){};% [label=left:$01$]{};
}
}
\end{tikzpicture}

\end{center}
\caption{Left: A $2$-wiggle decomposition of a subdivided torus $(C,S) \anc (C',S')$ into two cycles $C_1$ and $C_2$ (black and dotted red).   Note that since $(C,S)$ is distance regular (though $(C',S')$ is not) the cycles have the same length.  In the notation of Proposition~\ref{subsplit},  the black circular vertices are in $S_1$ and the red square vertices are in $S_2$.  Note that the vertices in $S_i$ are on the cycle $C_i$, and $S_1$ and $S_2$ partition the vertices of $C \sq S'$. Right: The 2-wiggle decomposition on the underlying torus, where every other vertex on a given cycle is in its representing set.
}\label{2wiggle}
\end{figure}

%%%%%%%%%%%%%%%%%%%%%%%%%%%%%%%%%%%%%%%%%%%%%%
\section{Decompositions of Cartesian products of graphs}\label{combine}
%%%%%%%%%%%%%%%%%%%%%%%%%%%%%%%%%%%%%%%%%%%%%%

The main result in this section is Lemma \ref{p7gen}, which will be the key  tool for inductively generating cycle decompositions on the hypercube. First we need two general statements about decompositions of Cartesian product graphs.

\begin{prop}\label{decomp}
 Let the graphs  $G_1, \ldots,  G_{a}$ form a splittable decomposition of $G$  with representing sets $S_1, \ldots, S_{a}$ and the graphs  $G_1' , \ldots, G_{b}'$  form a splittable decomposition of $G'$  with representing sets $S_1',  \ldots, S_{b}'$. Then 
\[G \sq G' = (G_1 \cup \cdots\cup G_{a})  \sq  (G_1' \cup \cdots\cup G_{b}') = \bigcup_{i=1}^{a} \bigcup_{j=1}^b (G_i,S_{i}) \anc (G_{j}',S_j'),\]
where the union of anchored products is pairwise edge-disjoint, i.e., a decomposition.
\end{prop}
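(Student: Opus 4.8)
The plan is to prove the claimed identity at the level of edges, by showing that the edge sets of the anchored products $(G_i,S_i) \anc (G'_j,S'_j)$ partition $E(G \sq G')$. First I would recall that every edge of $G \sq G'$ is either \emph{vertical}, of the form $(u,v)(u',v)$ with $uu' \in E(G)$ and $v \in V(G')$ fixed, or \emph{horizontal}, of the form $(u,v)(u,v')$ with $vv' \in E(G')$ and $u \in V(G)$ fixed. I would then handle these two types separately, in each case assigning the edge to a single anchored product.

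For a vertical edge $(u,v)(u',v)$, the key point is that since $\{G_1,\ldots,G_a\}$ is a decomposition of $G$, there is a unique index $i$ with $uu' \in E(G_i)$; and since the representing sets $S'_1,\ldots,S'_b$ partition $V(G')$, there is a unique index $j$ with $v \in S'_j$. The first defining clause of the anchored product (the one requiring $v=v' \in S'_j$) then places this edge in $(G_i,S_i) \anc (G'_j,S'_j)$ and, because of the uniqueness of $i$ and $j$, in no other anchored product. The argument for a horizontal edge $(u,v)(u,v')$ is symmetric: one uses that $\{G'_1,\ldots,G'_b\}$ decomposes $G'$ to select the unique $j$ with $vv' \in E(G'_j)$, and that $S_1,\ldots,S_a$ partition $V(G)$ to select the unique $i$ with $u \in S_i$, landing the edge in the second defining clause of exactly one anchored product.

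Conversely, every edge appearing in any $(G_i,S_i) \anc (G'_j,S'_j)$ is by definition either a vertical edge with $uu' \in E(G_i) \subseteq E(G)$ or a horizontal edge with $vv' \in E(G'_j) \subseteq E(G')$, hence an edge of $G \sq G'$. Combining this with the previous paragraph shows that each edge of $G \sq G'$ lies in exactly one of the anchored products, which yields simultaneously the stated equality of edge sets and the pairwise edge-disjointness, so that the union is a decomposition.

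I expect the only genuine care to lie in correctly matching the membership conditions ($v \in S'_j$ for vertical edges, $u \in S_i$ for horizontal edges) against the two defining clauses of the anchored product; this is precisely the step where the \emph{partition} property of the representing sets, and not merely the edge-decomposition property, is used, and it is what guarantees uniqueness of the selected index $j$ (respectively $i$). No obstacle arises on the vertex side: a single vertex $(u,v)$ of $G \sq G'$ may well lie in several anchored products, but since the statement concerns a decomposition into subgraphs we need only the \emph{edges} to be partitioned, which the argument above supplies.
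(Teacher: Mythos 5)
Your proposal is correct and follows essentially the same route as the paper's proof: a direct edge-level verification that splits edges into vertical and horizontal types, uses the edge-disjointness of the decompositions of $G$ and $G'$ together with the fact that the representing sets partition the vertex sets to place each edge in exactly one anchored product, and checks the reverse containment. The only difference is organizational --- you derive uniqueness from the uniqueness of the selected indices $(i,j)$, while the paper argues disjointness of two distinct anchored products by a without-loss-of-generality case split --- but the underlying facts invoked are identical.
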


\begin{proof}

We shall verify that every edge of  $G \sq G'= (G_1 \cup \cdots\cup G_{a})  \sq  (G_1' \cup \cdots\cup G_{b}')$  is accounted for exactly once in the union of anchored products. Let $e\in E(G \sq G')$, where without loss of generality $e= (u,v)(u',v)$ for $uu'\in E(G_i)$ and $v\in S_j'$. Then we see that $e\in E((G_i,S_{i}) \anc (G_{j}',S_j'))$.  Now, consider $e\in E((G_i,S_{i}) \anc (G_{j}',S_j'))$, then $e\in E(G_i\sq G_j') \subseteq E(G \sq G')$. 
 Finally, we need to check that no edge of $e$ belongs to two different anchored products $(G_i,S_{i}) \anc (G_{j}',S_j')$ and 
 $(G_q,S_{q}) \anc (G_{p}',S_p')$. Since these products are different, assume without loss of generality that $p\neq j$.
 Thus $S_p'\cap S_j' =\emptyset$. If $e\in E((G_i,S_{i}) \anc (G_{j}',S_j'))$, then $e= (u,v)(u',v)$ for $uu'\in E(G), v\in S_j'$ or 
 $e=(u,v)(u,v')$ for $u\in S_i$ and $vv'\in E(G_j')$. In the former case, $v\in S_j'$, thus $v\not\in S_p'$, so $e\not\in E(G_q,S_{q}) \anc (G_{p}',S_p')$. In the latter case $vv'\in E(G_j')$, thus, since $E(G_j')\cap E(G_p')=\emptyset$, we have that $vv'\not\in E(G_p')$.
 Thus $e\not\in (G_q,S_{q}) \anc (G_{p}',S_p')$.
\end{proof}

\begin{prop}\label{spanning}
Let graphs $G$ and $G'$  each have a decomposition into $a\ge 1$ spanning subgraphs, $G_1, \ldots, G_a$ and $G_1', \ldots, G_a'$, respectively. Then 
$$G \sq G' = (G_1 \cup \cdots\cup G_a) \sq  (G_1' \cup \cdots\cup G_a') = \bigcup_{i=1}^a G_i \sq G_i' ,$$
where the union is pairwise edge-disjoint, i.e., a decomposition.
\end{prop}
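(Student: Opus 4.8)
The plan is to verify the two claimed equalities in turn, with the second being an edge-by-edge accounting entirely parallel to the proof of Proposition~\ref{decomp}, but with every representing set replaced by the full vertex set. The first equality $G \sq G' = (G_1 \cup \cdots \cup G_a)\sq(G_1'\cup\cdots\cup G_a')$ is immediate: since $\{G_1,\dots,G_a\}$ decomposes $G$ into \emph{spanning} subgraphs, the $G_i$ all share the vertex set $V(G)$ and their edge sets partition $E(G)$, so $G = G_1 \cup \cdots \cup G_a$; likewise $G' = G_1' \cup \cdots \cup G_a'$. Substituting these into $G \sq G'$ gives the claim.

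For the second equality I would argue by double inclusion at the level of edges, using the vertical/horizontal classification of $E(G\sq G')$ from Section~\ref{anchored}. First, each $G_i \sq G_i'$ is a subgraph of $G \sq G'$: because $G_i$ and $G_i'$ are spanning, $V(G_i \sq G_i') = V(G)\times V(G') = V(G\sq G')$, and every edge of $G_i \sq G_i'$ arises from an edge of $G_i \subseteq G$ or of $G_i' \subseteq G'$, hence is an edge of $G \sq G'$. Conversely, take any edge $f \in E(G\sq G')$. If $f = (e,v)$ is vertical with $e \in E(G)$ and $v \in V(G')$, then since $\{G_1,\dots,G_a\}$ decomposes $G$ there is a unique index $i$ with $e \in E(G_i)$; as $G_i'$ is spanning we have $v \in V(G_i')$, so $f \in E(G_i \sq G_i')$, and no other diagonal product $G_j \sq G_j'$ contains $f$, since $e \notin E(G_j)$ for $j \neq i$. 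The horizontal case $f = (u, e')$ with $e' \in E(G')$ is symmetric, using the decomposition of $G'$ together with the fact that each $G_j$ is spanning. This shows every edge of $G\sq G'$ lies in exactly one $G_i \sq G_i'$, yielding simultaneously the covering $G\sq G' = \bigcup_{i=1}^a G_i\sq G_i'$ and the pairwise edge-disjointness.

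The only subtle point, and the place where the spanning hypothesis is essential, is the step guaranteeing that a vertical edge $(e,v)$ actually appears in $E(G_i \sq G_i')$ once $i$ is pinned down by $e \in E(G_i)$: this requires $v \in V(G_i')$, which holds precisely because $G_i'$ is spanning in $G'$ (and dually each $G_j$ is spanning for the horizontal edges). Were the factors not spanning, the vertex $v$ could be missing from $G_i'$ and the edge would go uncovered, so the diagonal union could fail to exhaust $G \sq G'$. I do not anticipate any genuine difficulty beyond this bookkeeping, as the argument is a direct specialization of the counting in Proposition~\ref{decomp}.
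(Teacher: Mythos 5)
Your proof is correct and follows essentially the same route as the paper's: an edge-by-edge double inclusion splitting $E(G \sq G')$ into vertical and horizontal edges, with the spanning hypothesis guaranteeing coverage and the edge-disjointness of the $G_i$ (resp.\ $G_i'$) giving uniqueness. Your write-up is in fact slightly more careful than the paper's, which leaves implicit the point you flag about needing $v \in V(G_i')$.
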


\begin{proof}
Consider an edge $e\in E(G\sq G')$. Then $e=(u,v)(u',v)$ for $uu'\in E(G_i), v\in V(G')$ or $e=(u,v)(u,v')$ for $u\in V(G), vv'\in E(G_i')$  for some $i=1, \ldots, a$. In both cases $e\in E(G_i\sq G_i')$.
Clearly any edge in $G_i\sq G_i'$ is in $G\sq G'$. 
Assume that there is an edge $e$, $e\in E(G_i\sq G_i')$, $e\in E(G_j\sq G_j')$, $i\neq j$. Without loss of generality $e=(u,v)(u',v)$.
Then $uu'\in E(G_i)\cap E(G_j)$, a contradiction.
\end{proof}

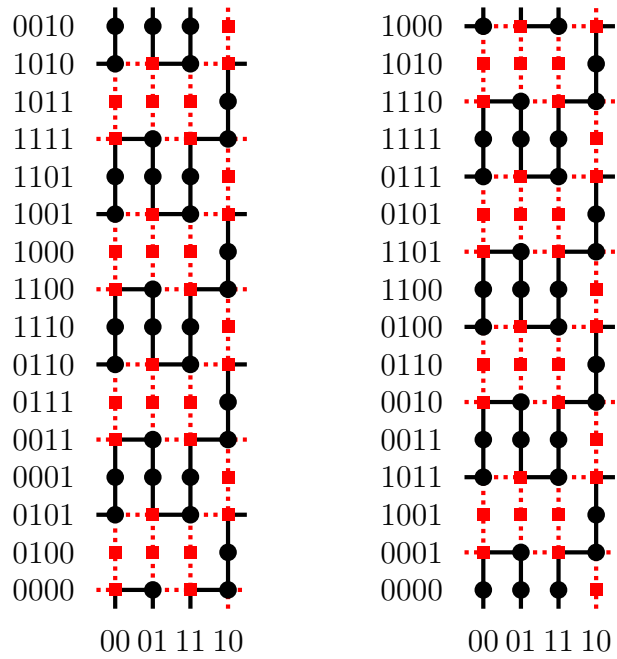
\begin{figure}
\begin{center}

\begin{tikzpicture}[scale=.5]  %Q4timesQ2

    \draw[ultra thick, dotted, red] (-.5,0) -- (0,0) -- (0,2) -- (1,2) -- (1,0) -- (2,0) -- (2,2) -- (3,2) -- (3,4) -- (3.5,4) ;
    \draw[ultra thick, dotted, red] (-.5,4) -- (0,4) -- (0,6) -- (1,6) -- (1,4) -- (2,4) -- (2,6) -- (3,6) -- (3,8) -- (3.5,8) ;
    \draw[ultra thick, dotted, red] (-.5,8) -- (0,8) -- (0,10) -- (1,10) -- (1,8) -- (2,8) -- (2,10) -- (3,10) -- (3,12) -- (3.5,12) ;
    \draw[ultra thick, , dotted, red] (-.5,12) -- (0,12) -- (0,14) -- (1,14) -- (1,12) -- (2,12) -- (2,14) -- (3,14) -- (3,15.5) ;
   \draw[ultra thick, dotted, red] (3,-.5) -- (3,0) -- (3.5,0);

    \draw[ultra thick] (2,-.5) -- (2,0) -- (3,0) -- (3,2) --(3.5,2)  ;
        \draw[ultra thick] (0,-.5) -- (0,0) -- (1,0) -- (1,-.5)  ;
    \draw[ultra thick] (-.5,2) -- (0,2) -- (0,4) -- (1,4) -- (1,2) -- (2,2) -- (2,4) -- (3,4) -- (3,6) -- (3.5,6) ;
        \draw[ultra thick] (-.5,6) -- (0,6) -- (0,8) -- (1,8) -- (1,6) -- (2,6) -- (2,8) -- (3,8) -- (3,10) -- (3.5,10) ;
        \draw[ultra thick] (-.5,10) -- (0,10) -- (0,12) -- (1,12) -- (1,10) -- (2,10) -- (2,12) -- (3,12) -- (3,14) -- (3.5,14) ;
        
    \draw[ultra thick] (-.5,14) -- (0,14) -- (0,15.5) ;
        \draw[ultra thick] (1, 15.5) -- (1,14) -- (2,14) -- (2,15.5) ;

    \foreach \x in {0,2}{
    \foreach \y in {2, 3, 6, 7, 10, 11, 14, 15}{
      \node[style={circle,fill=black},scale=.6] at (\x,\y){};% [label=left:$01$]{};
}
}

    \foreach \x in {1}{
    \foreach \y in {0, 3, 4, 7, 8, 11, 12, 15}{
      \node[style={circle,fill=black},scale=.6] at (\x,\y){};% [label=left:$01$]{};
}
}

    \foreach \x in {3}{
    \foreach \y in {0, 1, 4, 5, 8, 9, 12, 13}{
      \node[style={circle,fill=black},scale=.6] at (\x,\y){};% [label=left:$01$]{};
}
}

    \foreach \x in {0,2}{
    \foreach \y in {0, ..., 3}{
      \node[style={rectangle,fill=red},scale=.6] at (\x,4*\y){};% [label=left:$01$]{};
      \node[style={rectangle,fill=red},scale=.6] at (\x,4*\y+1){};% [label=left:$01$]{};
}
}

    \foreach \x in {1}{
    \foreach \y in {0, ..., 3}{
      \node[style={rectangle,fill=red},scale=.6] at (\x,4*\y+2){};% [label=left:$01$]{};
      \node[style={rectangle,fill=red},scale=.6] at (\x,4*\y+1){};% [label=left:$01$]{};
}
}

    \foreach \x in {3}{
    \foreach \y in {0, ..., 3}{
      \node[style={rectangle,fill=red},scale=.6] at (\x,4*\y+2){};% [label=left:$01$]{};
      \node[style={rectangle,fill=red},scale=.6] at (\x,4*\y+3){};% [label=left:$01$]{};
}
}

      \node (0) at (-.5,0) [label=left:$0000$]{};
      \node (1) at (-.5,1) [label=left:$0100$]{};
      \node (2) at (-.5,2) [label=left:$0101$]{};
      \node (3) at (-.5,3) [label=left:$0001$]{};
      \node (4) at (-.5,4) [label=left:$0011$]{};
      \node (5) at (-.5,5) [label=left:$0111$]{};
      \node (6) at (-.5,6) [label=left:$0110$]{};
      \node (7) at (-.5,7) [label=left:$1110$]{};
      \node (8) at (-.5,8) [label=left:$1100$]{};
      \node (9) at (-.5,9) [label=left:$1000$]{};
      \node (10) at (-.5,10) [label=left:$1001$]{};
      \node (11) at (-.5,11) [label=left:$1101$]{};
      \node (12) at (-.5,12) [label=left:$1111$]{};
      \node (13) at (-.5,13) [label=left:$1011$]{};
      \node (14) at (-.5,14) [label=left:$1010$]{};
      \node (15) at (-.5,15) [label=left:$0010$]{};

      \node (a) at (-0,-.5) [label=below:$00$]{};
      \node (b) at (1,-.5) [label=below:$01$]{};
      \node (c) at (2,-.5) [label=below:$11$]{};
      \node (d) at (3,-.5) [label=below:$10$]{};

\end{tikzpicture}
\hspace{.5in}
\begin{tikzpicture}[scale=.5]  %Q4timesQ2

    \draw[ultra thick, dotted, red] (-.5,0) -- (0,0) -- (0,2) -- (1,2) -- (1,0) -- (2,0) -- (2,2) -- (3,2) -- (3,4) -- (3.5,4) ;
    \draw[ultra thick, dotted, red] (-.5,4) -- (0,4) -- (0,6) -- (1,6) -- (1,4) -- (2,4) -- (2,6) -- (3,6) -- (3,8) -- (3.5,8) ;
    \draw[ultra thick, dotted, red] (-.5,8) -- (0,8) -- (0,10) -- (1,10) -- (1,8) -- (2,8) -- (2,10) -- (3,10) -- (3,12) -- (3.5,12) ;
    \draw[ultra thick, dotted, red] (-.5,12) -- (0,12) -- (0,14) -- (1,14) -- (1,12) -- (2,12) -- (2,14) -- (3,14) -- (3,14.5) ;
   \draw[ultra thick, dotted, red] (3,-1.5) -- (3,0) -- (3.5,0);

    \draw[ultra thick] (2,-1.5) -- (2,0) -- (3,0) -- (3,2) --(3.5,2)  ;
        \draw[ultra thick] (0,-1.5) -- (0,0) -- (1,0) -- (1,-1.5)  ;
    \draw[ultra thick] (-.5,2) -- (0,2) -- (0,4) -- (1,4) -- (1,2) -- (2,2) -- (2,4) -- (3,4) -- (3,6) -- (3.5,6) ;
        \draw[ultra thick] (-.5,6) -- (0,6) -- (0,8) -- (1,8) -- (1,6) -- (2,6) -- (2,8) -- (3,8) -- (3,10) -- (3.5,10) ;
        \draw[ultra thick] (-.5,10) -- (0,10) -- (0,12) -- (1,12) -- (1,10) -- (2,10) -- (2,12) -- (3,12) -- (3,14) -- (3.5,14) ;
        
    \draw[ultra thick] (-.5,14) -- (0,14) -- (0,14.5) ;
        \draw[ultra thick] (1, 14.5) -- (1,14) -- (2,14) -- (2,14.5) ;

    \foreach \x in {0, 2}{
    \foreach \y in {-1,2, 3, 6, 7, 10, 11, 14}{
      \node[style={circle,fill=black},scale=.6] at (\x,\y){};% [label=left:$01$]{};
}
}
    \foreach \x in {1}{
    \foreach \y in {-1,0, 3, 4, 7, 8, 11, 12}{
      \node[style={circle,fill=black},scale=.6] at (\x,\y){};% [label=left:$01$]{};
}
}

    \foreach \x in {3}{
    \foreach \y in {0, 1, 4, 5, 8, 9, 12, 13}{
      \node[style={circle,fill=black},scale=.6] at (\x,\y){};% [label=left:$01$]{};
}
}

    \foreach \x in {0,2}{
    \foreach \y in {0, ..., 3}{
      \node[style={rectangle,fill=red},scale=.6] at (\x,4*\y){};% [label=left:$01$]{};
      \node[style={rectangle,fill=red},scale=.6] at (\x,4*\y+1){};% [label=left:$01$]{};
}
}

    \foreach \x in {1}{
    \foreach \y in {0, ..., 3}{
      \node[style={rectangle,fill=red},scale=.6] at (\x,4*\y+2){};% [label=left:$01$]{};
      \node[style={rectangle,fill=red},scale=.6] at (\x,4*\y+1){};% [label=left:$01$]{};
}
}

    \foreach \x in {3}{
    \foreach \y in {0, ..., 2}{
      \node[style={rectangle,fill=red},scale=.6] at (\x,4*\y+2){};% [label=left:$01$]{};
      \node[style={rectangle,fill=red},scale=.6] at (\x,4*\y+3){};% [label=left:$01$]{};
}
}

      \node[style={rectangle,fill=red},scale=.6] at (3,14){};% [label=left:$01$]{};
      \node[style={rectangle,fill=red},scale=.6] at (3,-1){};% [label=left:$01$]{};

      \node (0) at (-.5,-1) [label=left:$0000$]{};
      \node (1) at (-.5,0) [label=left:$0001$]{};
      \node (2) at (-.5,1) [label=left:$1001$]{};
      \node (3) at (-.5,2) [label=left:$1011$]{};
      \node (4) at (-.5,3) [label=left:$0011$]{};
      \node (5) at (-.5,4) [label=left:$0010$]{};
      \node (6) at (-.5,5) [label=left:$0110$]{};
      \node (7) at (-.5,6) [label=left:$0100$]{};
      \node (8) at (-.5,7) [label=left:$1100$]{};
      \node (9) at (-.5,8) [label=left:$1101$]{};
      \node (10) at (-.5,9) [label=left:$0101$]{};
      \node (11) at (-.5,10) [label=left:$0111$]{};
      \node (12) at (-.5,11) [label=left:$1111$]{};
      \node (13) at (-.5,12) [label=left:$1110$]{};
      \node (14) at (-.5,13) [label=left:$1010$]{};
      \node (15) at (-.5,14) [label=left:$1000$]{};

      \node (a) at (-0,-1.5) [label=below:$00$]{};
      \node (b) at (1,-1.5) [label=below:$01$]{};
      \node (c) at (2,-1.5) [label=below:$11$]{};
      \node (d) at (3,-1.5) [label=below:$10$]{};

\end{tikzpicture}
\end{center}
\caption{A 2-splittable decomposition of $Q_6$ into four cycles of the same length.  $\F_1$ contains the two cycles on the left (black and dotted red), where the red square vertices and the black circular vertices are the representing sets for the cycle with the same color.  $\F_2$ contains the two cycles on the right, with the same color scheme for cycles and representing sets.}\label{642split}
\end{figure}

%%%%%%%%%%%%%%%%%%%%%%%%%%
%%%%%%%%%%%%%%%%%%%%%%%%%%
%%%                                                      %%%%%%
%%%            Lemma 8                           %%%%%%
%%%                                                       %%%%%%
%%%%%%%%%%%%%%%%%%%%%%%%%%
%%%%%%%%%%%%%%%%%%%%%%%%%%

\begin{lem}\label{p7gen}
Suppose the graph $G$ has an  $(a,b)$-DR-splittable decomposition into $a m$ cycles of the same even length 
and the graph $G'$ has a $c$-splittable decomposition 
into $cm$ cycles of the same length
such that the representing sets in both decompositions have an even number of vertices.
Then $G \sq G'$ has a $2bc$-splittable decomposition into $2m a c$ cycles of the same length, where all representing sets have an even number of vertices.
\end{lem}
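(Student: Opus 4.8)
The plan is to assemble the decomposition one matched pair of splitting sets at a time: within each pair I use Proposition~\ref{decomp} to break the relevant product into subdivided tori, and then apply the $2$-wiggle decomposition to each such torus. Write $\F_1, \ldots, \F_m$ for the splitting sets of the decomposition of $G$ and $\F_1', \ldots, \F_m'$ for those of $G'$, where $\F_i$ decomposes a spanning subgraph $G^{(i)}$ of $G$ and $\F_i'$ decomposes a spanning subgraph $G'^{(i)}$ of $G'$. Since the $\F_i$ partition the $am$ cycles of $G$, the subgraphs $G^{(i)}$ are edge-disjoint with union $G$, and likewise the $G'^{(i)}$; thus each family is a decomposition into $m$ spanning subgraphs, and Proposition~\ref{spanning} yields the edge-disjoint decomposition $G \sq G' = \bigcup_{i=1}^m G^{(i)} \sq G'^{(i)}$. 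For a fixed $i$, applying Proposition~\ref{decomp} to $\F_i = \{G_{i,1}, \ldots, G_{i,a}\}$ (with representing sets $S_{i,p}$) and $\F_i' = \{G_{i,1}', \ldots, G_{i,c}'\}$ (with representing sets $S_{i,q}'$) expresses $G^{(i)} \sq G'^{(i)}$ as the edge-disjoint union of the $ac$ subdivided tori $(G_{i,p}, S_{i,p}) \anc (G_{i,q}', S_{i,q}')$.

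Next I apply the $2$-wiggle decomposition to each of these subdivided tori. Its underlying torus has side lengths $|S_{i,p}|$ and $|S_{i,q}'|$, both even by hypothesis, so the torus allows the $2$-wiggle decomposition; moreover $(G_{i,p}, S_{i,p})$ is distance regular since the decomposition of $G$ is DR-splittable, so Proposition~\ref{2even} guarantees that the two resulting cycles have equal length. This produces $2 m a c$ cycles in total. Writing $s = |V(G)|/a$ and $s' = |V(G')|/c$ for the common representing-set sizes, and $L, L'$ for the lengths of the cycles of $G$ and $G'$, every subdivided torus $(G_{i,p}, S_{i,p}) \anc (G_{i,q}', S_{i,q}')$ has exactly $L s' + s L'$ edges, independent of $i, p, q$; hence each of the $2mac$ cycles has the common length $(L s' + s L')/2$.

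It remains to exhibit the $2bc$-splittable structure. Using the $(a,b)$-structure of $G$, partition each $\F_i$ into subsets $\F_{i,1}, \ldots, \F_{i,a/b}$ of $b$ pairwise vertex-disjoint cycles spanning $V(G)$. For each pair $(i,r)$ I declare one splitting set, namely the $2bc$ wiggle cycles arising from the subdivided tori $(G_{i,p}, S_{i,p}) \anc (G_{i,q}', S_{i,q}')$ with $G_{i,p} \in \F_{i,r}$ and $1 \le q \le c$; this gives $m(a/b) = ma/b$ splitting sets of $2bc$ cycles each, as required. To furnish representing sets I invoke Proposition~\ref{subsplit}, which for each such subdivided torus returns two equal-size sets partitioning $V(G_{i,p}) \times S_{i,q}'$, one inside each of its two wiggle cycles; I take these as the representing sets. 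Within a fixed $(i,r)$, the sets $V(G_{i,p})$ for $G_{i,p} \in \F_{i,r}$ partition $V(G)$ and the sets $S_{i,q}'$ partition $V(G')$, so the ``boxes'' $V(G_{i,p}) \times S_{i,q}'$ partition $V(G \sq G')$; consequently the chosen representing sets partition $V(G \sq G')$, certifying that each splitting set is a splittable decomposition of a spanning subgraph of $G \sq G'$. Each representing set has size $L s'/2$, which is positive and even because $L$ and $s'$ are even, giving the conclusion.

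I expect the main obstacle to be this last step. The sets returned by Proposition~\ref{subsplit} only partition the boxes $V(G_{i,p}) \times S_{i,q}'$, not the full vertex sets of the subdivided tori (which also contain $S_{i,p} \times V(G_{i,q}')$), so the crux is to check that these boxes, taken over a single $(i,r)$, tile $V(G \sq G')$ exactly once. This is where the asymmetry of the construction matters: one plays full vertex-set coverage from the vertex-disjoint spanning families $\F_{i,r}$ on the $G$-side against representing-set coverage from the $S_{i,q}'$ on the $G'$-side. A secondary point requiring care is tracking which evenness hypothesis is used where --- $|S_{i,p}|$ and $|S_{i,q}'|$ even to allow the $2$-wiggle, and $L$ even together with $s'$ even to make the output representing sets even.
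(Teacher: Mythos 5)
Your proposal is correct and follows essentially the same route as the paper's proof: Propositions~\ref{spanning} and \ref{decomp} to split $G \sq G'$ into subdivided tori indexed by matched splitting sets, the $2$-wiggle with Proposition~\ref{2even} for equal-length cycles, splitting sets $\{C_{s,t}, C'_{s,t} : C_s \in \F_{i,j},\, C_t' \in \F_i'\}$, and Proposition~\ref{subsplit} for representing sets partitioning the boxes $V(C_s) \times S_t'$, with the same evenness bookkeeping (the paper notes $|V(C_s) \times S_t'|$ is a multiple of four where you write $Ls'/2$ is even). The only difference is cosmetic: you make the edge and cardinality counts explicit where the paper argues them qualitatively.
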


Before giving the proof, we consider some examples: Figure~\ref{642split} illustrates how Lemma~\ref{p7gen} is applied to decompose $Q_6$ into 4 cycles.  In this example, we write $Q_6 = Q_4 \sq Q_2$, where $Q_4$ has a $(2,1)$-DR-splittable decomposition into two 16-cycles
\begin{align*}
C_1 = ( &0000, 0100,0101, 0001, 0011, 0111, 0110, 1110,\\
& 1100, 1000, 1001, 1101, 1111, 1011, 1010, 0010, 0000)
\end{align*}
and 
\begin{align*}
C_2 = ( &0000,0001,1001,1011, 0011,0010,0110, 0100,\\
& 1100, 1101, 0101, 0111, 1111, 1110, 1010, 1000,0000)
\end{align*}
with representing sets 
\[S_1 = \{0000, 0101, 0011, 0110, 1100,  1001,  1111, 1010\}\]
and 
\[S_2 = \{0001,1011, 0010, 0100, 1101, 0111, 1110,  1000\},\]
respectively. We know $Q_2$ has a $1$-splittable decomposition into  one 4-cycle $(00,01,11,10,00)$.   So $a =2$, $b=1$, $c=1$, and $m=1$, giving $2bc = 2$ and $2 m a c = 4$.  Thus the result is a 2-splittable decomposition into 4 cycles. The two cycles in each subdivided torus split the vertices of $Q_6$. %, where vertex colors in the figure correspond to the representing sets in the resulting 2-splittable decomposition. 
Note that the vector corresponding to any vertex in $Q_6$ in the figure can be found by concatenating the vector to its left and the vector below.

Figure~\ref{4lsplit} illustrates how Lemma~\ref{p7gen} is applied to decompose  $Q_6$ into 8 cycles.  Again, we write $Q_6 = Q_4 \sq Q_2$, where $Q_4$ has a $(4,2)$-DR-splittable decomposition into four 8-cycles
\begin{align*}
C_1 &= (0000,0100,0101,1101,1111,1011, 1010, 0010, 0000),\\
C_2 &= (1100, 1000, 1001, 0001, 0011, 0111, 0110, 1110, 1100),\\ 
C_3 &= (0100, 1100, 1101, 1001, 1011, 0011, 0010, 0110, 0100), \text{ and }\\ 
C_4 &= (1000, 0000, 0001, 0101, 0111, 1111, 1110, 1010, 1000),
\end{align*}
with representing sets 
\begin{align*}
S_1 &= \{0000, 0101, 1111, 1010 \},\\
S_2 &= \{1100,  1001, 0011,  0110 \},\\ 
S_3 &= \{0100, 1101,  1011,  0010 \}, \text{ and }\\
S_4 &= \{1000, 0001, 0111,  1110 \},
\end{align*}
respectively. In this decomposition we take $\F_1 = \{C_1, C_2, C_3, C_4\}$, with $\F_{1,1} = \{C_1, C_2\}$ and $\F_{1,2} = \{C_3, C_4\}$, i.e. $V(Q_4)$ is partitioned by $S_1 \cup S_2 \cup S_3 \cup S_4$, and also by $V(C_1) \cup V(C_2)$ and $V(C_3) \cup V(C_4)$. We know $Q_2$ has a 1-splittable decomposition into into one 4-cycle $(00,01,11,10,00)$.    So $a =4$, $b=2$, $c=1$, and $m=1$, giving $2bc = 4$, and $2 m a c = 8$.  Thus the result is a 4-splittable decomposition into 8 cycles. % The vertex colors correspond to the representing sets in the resulting 4-splittable decomposition, where the four cycles from the left two tori and the four cycles from the right two tori each split the vertices of $Q_6$.

\begin{proof}

Let $C_1, \ldots, C_{a m}$ and $C_1', \ldots, C_{c m}'$ be the cycles decomposing $G$ and $G'$, respectively, with representing sets $S_1, \ldots, S_{a m}$  and  $S_1', \ldots ,S_{c m}'$. Let $\F_1, \ldots, \F_m$ be splitting sets, with splitting subsets $\F_{i,1}, \ldots, \F_{i,a/b}$ for $1 \le i \le m$, of the $(a,b)$-splittable decomposition of $G$, and $\F'_1, \ldots, \F'_m$ be the splitting sets for the $c$-splittable decomposition of $G'$.
%Let $\{C_1, \dots, C_{a m}\} = \F_1\cup \cdots \cup \F_m$ and $\{C'_1, \dots, C'_{c m}\} = \F'_1\cup \cdots \cup \F'_m$, 
%where each of the $\F_i$ and $\F_i'$ are as in the definition of $(a,b)$- or $c$-splittable decomposition. 
That is, for $i=1, \ldots, m$, $\F_i$ consists of  $a$  cycles $C_s$, and can be partitioned into subsets $\F_{i,1}, \ldots, \F_{i,a/b}$ where the $b$ cycles in each $\F_{i,j}$ are vertex disjoint and span $V(G)$. Similarly, $\F'_i$ consists of  $c$  cycles $C'_t$, and $\F'_i$  forms a splittable decomposition of a spanning subgraph of $G'$, $i=1, \ldots, m$. Then

\begin{align*}
G \sq G' &=  \bigcup_{i=1}^{m} \bigcup_{C\in \F_i} C~ ~ \sq ~~ \bigcup_{i=1}^{m} \bigcup_{C'\in \F'_i}  C'& \\
&=  \bigcup_{i=1}^{m}  \left( \bigcup_{C\in \F_i }C~\sq ~\bigcup_{C'\in \F'_i} C' \right)&  \text{by Proposition~\ref{spanning}} \\
&=  \bigcup_{i=1}^{m}      \bigcup_{C_s\in \F_i} \bigcup_{C'_t\in \F'_i}  (C_s, S_s) \anc (C_t', S_t')
& \text{by Proposition~\ref{decomp}}. \\
\end{align*}

Each $(C_s, S_s) \anc (C_t', S_t')$ is a subdivided torus, denote it by $T_{s,t}$.  Recall that these tori  are pairwise edge-disjoint (see Proposition~\ref{spanning}) and the unions of anchored products are pairwise edge-disjoint (see Proposition~\ref{decomp}).  Since each $|S_s|$ and $|S_t'|$ is even, $T_{s,t}$ allows the 2-wiggle decomposition, and decomposes into two cycles, $C_{s,t}$ and $C'_{s,t}$. Since each $(C_s,S_s)$ is distance regular, by Proposition~\ref{2even}, $C_{s,t}$ and $C'_{s,t}$ have same length. This gives a decomposition of $G \sq G'$ into $2\cdot m\cdot a \cdot c$ cycles.
Since each $S_s$ has the same cardinality, and each $S_t'$ has the same cardinality,  all tori  $T_{s,t}$ have the same number of edges and thus all the resulting cycles of the decomposition have the same length.  

We need to argue that the resulting cycle decomposition is $2bc$-splittable, i.e., the cycles can be grouped into splitting sets of size $2bc$ each, where each cycle has a representing set of the same even cardinality, and the representing sets for a given splitting set partition $V(G \sq G')$.
For $i=1, \ldots, m$, $j=1, \ldots, a/b$, let the splitting set ${\cal H}_{i,j} = \{C_{s, t}, C'_{s, t}:  C_s \in \F_{i,j}, C_t'\in \F_i'\}$. Note that each ${\cal H}_{i,j}$ contains $2bc$ cycles, and each cycle in the decomposition is in exactly one such set. It remains to assign representing sets of even cardinality to each cycle in ${\cal H}_{i,j}$ so that they partition $V(G \sq G')$.

Fix $i$ and $j$.  Given $C_s \in \F_{i,j}$ and $C_t'\in \F_i'$ we will split the vertices in each $V(C_s) \times S'_t$ into two sets $S_{s,t}$ and $S_{s,t}'$ to form representing sets for  $C_{s, t}$ and $C'_{s, t}$.  First we verify that this will partition the vertices in $V(G\sq G')$.
Since the sets $\{V(C_s) : C_s \in \F_{i,j}\}$ partition $V(G)$, for a given $t$, the sets $\{V(C_s) \times S'_t : ~ C_s \in \F_{i,j} \}$ partition $V(G) \times S_t'$. Since the sets $\{S_t' : ~ C_t'\in \F_i'\}$ partition $V(G')$, the set $\{V(C_s) \times S'_t : ~ C_s \in \F_{i,j}, C_t' \in \F_i' \}$ partitions $V(G) \times V(G') = V(G\sq G')$. 
  
Since $(C_s,S_s)$ is distance regular, Proposition~\ref{subsplit} assures that we can find $S_{s,t} \subseteq V(C_{s,t})$ and $S_{s,t}' \subseteq V(C'_{s,t})$ where these sets have the same cardinality and partition $V(C_s)\times S_t'$. Further, since every $C_s$ is of the same even length and every $S_t'$ has the same even cardinality, for every $C_s \in \F_{i,j}, C_t' \in \F_i' $, the set $V(C_s) \times S'_t$ contains the same number of vertices, and this number is a multiple of four. This implies the number of vertices in $S_{s,t}$ and $S_{s,t}'$ is even.
\end{proof}

\begin{figure}

\begin{tikzpicture}[scale=.5]  %Q4timesQ2

%    \foreach \x in {0, ..., 3}{
%    \draw[ultra thick] (\x,-.5) -- (\x,7.5);
%}
%    \foreach \y in {0, ...,3 }{
%    \draw[ultra thick] (-.5,2*\y) -- (3.5,2*\y);
%}

    \draw[ultra thick, red, dotted] (-.5,0) -- (0,0) -- (0,2) -- (1,2) -- (1,0) -- (2,0) -- (2,2) -- (3,2) -- (3,4) -- (3.5,4) ;
    \draw[ultra thick, red, dotted] (-.5,4) -- (0,4) -- (0,6) -- (1,6) -- (1,4) -- (2,4) -- (2,6) -- (3,6) -- (3,7.5);
%    \draw[ultra thick, red] (-.5,8) -- (0,8) -- (0,10) -- (1,10) -- (1,8) -- (2,8) -- (2,10) -- (3,10) -- (3,12) -- (3.5,12) ;
%    \draw[ultra thick, red] (-.5,12) -- (0,12) -- (0,14) -- (1,14) -- (1,12) -- (2,12) -- (2,14) -- (3,14) -- (3,15.5) ;
   \draw[ultra thick, red, dotted] (3,-.5) -- (3,0) -- (3.5,0);
   
       \draw[ultra thick] (0,-.5) -- (0,0) -- (1,0) -- (1,-.5) ;
    \draw[ultra thick] (-.5,2) -- (0,2) -- (0,4) -- (1,4) -- (1,2) -- (2,2) -- (2,4) -- (3,4) -- (3,6) -- (3.5,6);
%    \draw[ultra thick, red] (-.5,8) -- (0,8) -- (0,10) -- (1,10) -- (1,8) -- (2,8) -- (2,10) -- (3,10) -- (3,12) -- (3.5,12) ;
%    \draw[ultra thick, red] (-.5,12) -- (0,12) -- (0,14) -- (1,14) -- (1,12) -- (2,12) -- (2,14) -- (3,14) -- (3,15.5) ;
   \draw[ultra thick] (2,-.5) -- (2,0) -- (3,0) -- (3,2) -- (3.5,2);
    \draw[ultra thick] (-.5,6) -- (0,6) -- (0,7.5) ;
        \draw[ultra thick] (1,7.5) -- (1,6) -- (2,6) -- (2,7.5) ;

    \foreach \x in {0,2}{
    \foreach \y in {2, 3, 6, 7}{
      \node[style={circle,fill=black},scale=.6] at (\x,\y){};% [label=left:$01$]{};
}
}

    \foreach \x in {1}{
    \foreach \y in {0,3, 4, 7}{
      \node[style={circle,fill=black},scale=.6] at (\x,\y){};% [label=left:$01$]{};
}
}

    \foreach \x in {3}{
    \foreach \y in {0, 1, 4, 5,}{
      \node[style={circle,fill=black},scale=.6] at (\x,\y){};% [label=left:$01$]{};
}
}

    \foreach \x in {0, 2}{
    \foreach \y in {0, 1}{
      \node[style={rectangle,fill=red},scale=.6] at (\x,4*\y){};% [label=left:$01$]{};
      \node[style={rectangle,fill=red},scale=.6] at (\x,4*\y+1){};% [label=left:$01$]{};
}
}

    \foreach \x in {1}{
    \foreach \y in {0, 1}{
      \node[style={rectangle,fill=red},scale=.6] at (\x,4*\y+2){};% [label=left:$01$]{};
      \node[style={rectangle,fill=red},scale=.6] at (\x,4*\y+1){};% [label=left:$01$]{};
}
}

    \foreach \x in {3}{
    \foreach \y in {0, 1}{
      \node[style={rectangle,fill=red},scale=.6] at (\x,4*\y+2){};% [label=left:$01$]{};
      \node[style={rectangle,fill=red},scale=.6] at (\x,4*\y+3){};% [label=left:$01$]{};
}
}

      \node (0) at (-.5,0) [label=left:$0000$]{};
      \node (1) at (-.5,1) [label=left:$0100$]{};
      \node (2) at (-.5,2) [label=left:$0101$]{};
      \node (3) at (-.5,3) [label=left:$1101$]{};
      \node (4) at (-.5,4) [label=left:$1111$]{};
      \node (5) at (-.5,5) [label=left:$1011$]{};
      \node (6) at (-.5,6) [label=left:$1010$]{};
      \node (7) at (-.5,7) [label=left:$0010$]{};

      \node (4) at (0,-.5) [label=below:$00$]{};
      \node (5) at (1, -.5) [label=below:$01$]{};
      \node (6) at (2, -.5) [label=below:$11$]{};
      \node (7) at (3,-.5) [label=below:$10$]{};

\end{tikzpicture}
\hspace{.2in}
\begin{tikzpicture}[scale=.5]  %Q4timesQ2

    \draw[ultra thick, red, dotted] (-.5,0) -- (0,0) -- (0,2) -- (1,2) -- (1,0) -- (2,0) -- (2,2) -- (3,2) -- (3,4) -- (3.5,4) ;
    \draw[ultra thick, red, dotted] (-.5,4) -- (0,4) -- (0,6) -- (1,6) -- (1,4) -- (2,4) -- (2,6) -- (3,6) -- (3,7.5);
%    \draw[ultra thick, red] (-.5,8) -- (0,8) -- (0,10) -- (1,10) -- (1,8) -- (2,8) -- (2,10) -- (3,10) -- (3,12) -- (3.5,12) ;
%    \draw[ultra thick, red] (-.5,12) -- (0,12) -- (0,14) -- (1,14) -- (1,12) -- (2,12) -- (2,14) -- (3,14) -- (3,15.5) ;
   \draw[ultra thick, red, dotted] (3,-.5) -- (3,0) -- (3.5,0);
   
       \draw[ultra thick] (0,-.5) -- (0,0) -- (1,0) -- (1,-.5) ;
    \draw[ultra thick] (-.5,2) -- (0,2) -- (0,4) -- (1,4) -- (1,2) -- (2,2) -- (2,4) -- (3,4) -- (3,6) -- (3.5,6);
%    \draw[ultra thick, red] (-.5,8) -- (0,8) -- (0,10) -- (1,10) -- (1,8) -- (2,8) -- (2,10) -- (3,10) -- (3,12) -- (3.5,12) ;
%    \draw[ultra thick, red] (-.5,12) -- (0,12) -- (0,14) -- (1,14) -- (1,12) -- (2,12) -- (2,14) -- (3,14) -- (3,15.5) ;
   \draw[ultra thick] (2,-.5) -- (2,0) -- (3,0) -- (3,2) -- (3.5,2);
    \draw[ultra thick] (-.5,6) -- (0,6) -- (0,7.5) ;
        \draw[ultra thick] (1,7.5) -- (1,6) -- (2,6) -- (2,7.5) ;

    \foreach \x in {0,2}{
    \foreach \y in {2, 3, 6, 7}{
      \node[style={circle,fill=black},scale=.6] at (\x,\y){};% [label=left:$01$]{};
}
}

    \foreach \x in {1}{
    \foreach \y in {0,3, 4, 7}{
      \node[style={circle,fill=black},scale=.6] at (\x,\y){};% [label=left:$01$]{};
}
}

    \foreach \x in {3}{
    \foreach \y in {0, 1, 4, 5,}{
      \node[style={circle,fill=black},scale=.6] at (\x,\y){};% [label=left:$01$]{};
}
}

    \foreach \x in {0, 2}{
    \foreach \y in {0, 1}{
      \node[style={rectangle,fill=red},scale=.6] at (\x,4*\y){};% [label=left:$01$]{};
      \node[style={rectangle,fill=red},scale=.6] at (\x,4*\y+1){};% [label=left:$01$]{};
}
}

    \foreach \x in {1}{
    \foreach \y in {0, 1}{
      \node[style={rectangle,fill=red},scale=.6] at (\x,4*\y+2){};% [label=left:$01$]{};
      \node[style={rectangle,fill=red},scale=.6] at (\x,4*\y+1){};% [label=left:$01$]{};
}
}

    \foreach \x in {3}{
    \foreach \y in {0, 1}{
      \node[style={rectangle,fill=red},scale=.6] at (\x,4*\y+2){};% [label=left:$01$]{};
      \node[style={rectangle,fill=red},scale=.6] at (\x,4*\y+3){};% [label=left:$01$]{};
}
}

      \node (0) at (-.5,0) [label=left:$1100$]{};
      \node (1) at (-.5,1) [label=left:$1000$]{};
      \node (2) at (-.5,2) [label=left:$1001$]{};
      \node (3) at (-.5,3) [label=left:$0001$]{};
      \node (4) at (-.5,4) [label=left:$0011$]{};
      \node (5) at (-.5,5) [label=left:$0111$]{};
      \node (6) at (-.5,6) [label=left:$0110$]{};
      \node (7) at (-.5,7) [label=left:$1110$]{};

      \node (4) at (0,-.5) [label=below:$00$]{};
      \node (5) at (1, -.5) [label=below:$01$]{};
      \node (6) at (2, -.5) [label=below:$11$]{};
      \node (7) at (3,-.5) [label=below:$10$]{};

\end{tikzpicture}
\hfill
\begin{tikzpicture}[scale=.5]  %Q4timesQ2

    \draw[ultra thick, red, dotted] (-.5,0) -- (0,0) -- (0,2) -- (1,2) -- (1,0) -- (2,0) -- (2,2) -- (3,2) -- (3,4) -- (3.5,4) ;
    \draw[ultra thick, red, dotted] (-.5,4) -- (0,4) -- (0,6) -- (1,6) -- (1,4) -- (2,4) -- (2,6) -- (3,6) -- (3,7.5);
%    \draw[ultra thick, red] (-.5,8) -- (0,8) -- (0,10) -- (1,10) -- (1,8) -- (2,8) -- (2,10) -- (3,10) -- (3,12) -- (3.5,12) ;
%    \draw[ultra thick, red] (-.5,12) -- (0,12) -- (0,14) -- (1,14) -- (1,12) -- (2,12) -- (2,14) -- (3,14) -- (3,15.5) ;
   \draw[ultra thick, red, dotted] (3,-.5) -- (3,0) -- (3.5,0);
   
       \draw[ultra thick] (0,-.5) -- (0,0) -- (1,0) -- (1,-.5) ;
    \draw[ultra thick] (-.5,2) -- (0,2) -- (0,4) -- (1,4) -- (1,2) -- (2,2) -- (2,4) -- (3,4) -- (3,6) -- (3.5,6);
%    \draw[ultra thick, red] (-.5,8) -- (0,8) -- (0,10) -- (1,10) -- (1,8) -- (2,8) -- (2,10) -- (3,10) -- (3,12) -- (3.5,12) ;
%    \draw[ultra thick, red] (-.5,12) -- (0,12) -- (0,14) -- (1,14) -- (1,12) -- (2,12) -- (2,14) -- (3,14) -- (3,15.5) ;
   \draw[ultra thick] (2,-.5) -- (2,0) -- (3,0) -- (3,2) -- (3.5,2);
    \draw[ultra thick] (-.5,6) -- (0,6) -- (0,7.5) ;
        \draw[ultra thick] (1,7.5) -- (1,6) -- (2,6) -- (2,7.5) ;

    \foreach \x in {0,2}{
    \foreach \y in {2, 3, 6, 7}{
      \node[style={circle,fill=black},scale=.6] at (\x,\y){};% [label=left:$01$]{};
}
}

    \foreach \x in {1}{
    \foreach \y in {0,3, 4, 7}{
      \node[style={circle,fill=black},scale=.6] at (\x,\y){};% [label=left:$01$]{};
}
}

    \foreach \x in {3}{
    \foreach \y in {0, 1, 4, 5,}{
      \node[style={circle,fill=black},scale=.6] at (\x,\y){};% [label=left:$01$]{};
}
}

    \foreach \x in {0, 2}{
    \foreach \y in {0, 1}{
      \node[style={rectangle,fill=red},scale=.6] at (\x,4*\y){};% [label=left:$01$]{};
      \node[style={rectangle,fill=red},scale=.6] at (\x,4*\y+1){};% [label=left:$01$]{};
}
}

    \foreach \x in {1}{
    \foreach \y in {0, 1}{
      \node[style={rectangle,fill=red},scale=.6] at (\x,4*\y+2){};% [label=left:$01$]{};
      \node[style={rectangle,fill=red},scale=.6] at (\x,4*\y+1){};% [label=left:$01$]{};
}
}

    \foreach \x in {3}{
    \foreach \y in {0, 1}{
      \node[style={rectangle,fill=red},scale=.6] at (\x,4*\y+2){};% [label=left:$01$]{};
      \node[style={rectangle,fill=red},scale=.6] at (\x,4*\y+3){};% [label=left:$01$]{};
}
}

      \node (0) at (-.5,0) [label=left:$0100$]{};
      \node (1) at (-.5,1) [label=left:$1100$]{};
      \node (2) at (-.5,2) [label=left:$1101$]{};
      \node (3) at (-.5,3) [label=left:$1001$]{};
      \node (4) at (-.5,4) [label=left:$1011$]{};
      \node (5) at (-.5,5) [label=left:$0011$]{};
      \node (6) at (-.5,6) [label=left:$0010$]{};
      \node (7) at (-.5,7) [label=left:$0110$]{};

      \node (4) at (0,-.5) [label=below:$00$]{};
      \node (5) at (1, -.5) [label=below:$01$]{};
      \node (6) at (2, -.5) [label=below:$11$]{};
      \node (7) at (3,-.5) [label=below:$10$]{};

\end{tikzpicture}
\hspace{.2in}
\begin{tikzpicture}[scale=.5]  %Q4timesQ2

    \draw[ultra thick, red, dotted] (-.5,0) -- (0,0) -- (0,2) -- (1,2) -- (1,0) -- (2,0) -- (2,2) -- (3,2) -- (3,4) -- (3.5,4) ;
    \draw[ultra thick, red, dotted] (-.5,4) -- (0,4) -- (0,6) -- (1,6) -- (1,4) -- (2,4) -- (2,6) -- (3,6) -- (3,7.5);
%    \draw[ultra thick, red] (-.5,8) -- (0,8) -- (0,10) -- (1,10) -- (1,8) -- (2,8) -- (2,10) -- (3,10) -- (3,12) -- (3.5,12) ;
%    \draw[ultra thick, red] (-.5,12) -- (0,12) -- (0,14) -- (1,14) -- (1,12) -- (2,12) -- (2,14) -- (3,14) -- (3,15.5) ;
   \draw[ultra thick, red, dotted] (3,-.5) -- (3,0) -- (3.5,0);
   
       \draw[ultra thick] (0,-.5) -- (0,0) -- (1,0) -- (1,-.5) ;
    \draw[ultra thick] (-.5,2) -- (0,2) -- (0,4) -- (1,4) -- (1,2) -- (2,2) -- (2,4) -- (3,4) -- (3,6) -- (3.5,6);
%    \draw[ultra thick, red] (-.5,8) -- (0,8) -- (0,10) -- (1,10) -- (1,8) -- (2,8) -- (2,10) -- (3,10) -- (3,12) -- (3.5,12) ;
%    \draw[ultra thick, red] (-.5,12) -- (0,12) -- (0,14) -- (1,14) -- (1,12) -- (2,12) -- (2,14) -- (3,14) -- (3,15.5) ;
   \draw[ultra thick] (2,-.5) -- (2,0) -- (3,0) -- (3,2) -- (3.5,2);
    \draw[ultra thick] (-.5,6) -- (0,6) -- (0,7.5) ;
        \draw[ultra thick] (1,7.5) -- (1,6) -- (2,6) -- (2,7.5) ;

    \foreach \x in {0,2}{
    \foreach \y in {2, 3, 6, 7}{
      \node[style={circle,fill=black},scale=.6] at (\x,\y){};% [label=left:$01$]{};
}
}

    \foreach \x in {1}{
    \foreach \y in {0,3, 4, 7}{
      \node[style={circle,fill=black},scale=.6] at (\x,\y){};% [label=left:$01$]{};
}
}

    \foreach \x in {3}{
    \foreach \y in {0, 1, 4, 5,}{
      \node[style={circle,fill=black},scale=.6] at (\x,\y){};% [label=left:$01$]{};
}
}

    \foreach \x in {0, 2}{
    \foreach \y in {0, 1}{
      \node[style={rectangle,fill=red},scale=.6] at (\x,4*\y){};% [label=left:$01$]{};
      \node[style={rectangle,fill=red},scale=.6] at (\x,4*\y+1){};% [label=left:$01$]{};
}
}

    \foreach \x in {1}{
    \foreach \y in {0, 1}{
      \node[style={rectangle,fill=red},scale=.6] at (\x,4*\y+2){};% [label=left:$01$]{};
      \node[style={rectangle,fill=red},scale=.6] at (\x,4*\y+1){};% [label=left:$01$]{};
}
}

    \foreach \x in {3}{
    \foreach \y in {0, 1}{
      \node[style={rectangle,fill=red},scale=.6] at (\x,4*\y+2){};% [label=left:$01$]{};
      \node[style={rectangle,fill=red},scale=.6] at (\x,4*\y+3){};% [label=left:$01$]{};
}
}

      \node (0) at (-.5,0) [label=left:$1000$]{};
      \node (1) at (-.5,1) [label=left:$0000$]{};
      \node (2) at (-.5,2) [label=left:$0001$]{};
      \node (3) at (-.5,3) [label=left:$0101$]{};
      \node (4) at (-.5,4) [label=left:$0111$]{};
      \node (5) at (-.5,5) [label=left:$1111$]{};
      \node (6) at (-.5,6) [label=left:$1110$]{};
      \node (7) at (-.5,7) [label=left:$1010$]{};

      \node (4) at (0,-.5) [label=below:$00$]{};
      \node (5) at (1, -.5) [label=below:$01$]{};
      \node (6) at (2, -.5) [label=below:$11$]{};
      \node (7) at (3,-.5) [label=below:$10$]{};

\end{tikzpicture}

\caption{A 4-splittable decomposition of $Q_6$ into eight cycles of the same length.  $\F_1$ contains the four cycles on the left two subdivided tori, where the red square vertices and the black circular vertices are the representing sets for the red dotted and black cycles, respectively.  $\F_2$ contains the four cycles on the right two subdivided tori, with the same color scheme for cycles and representing sets.}\label{4lsplit}
\end{figure}
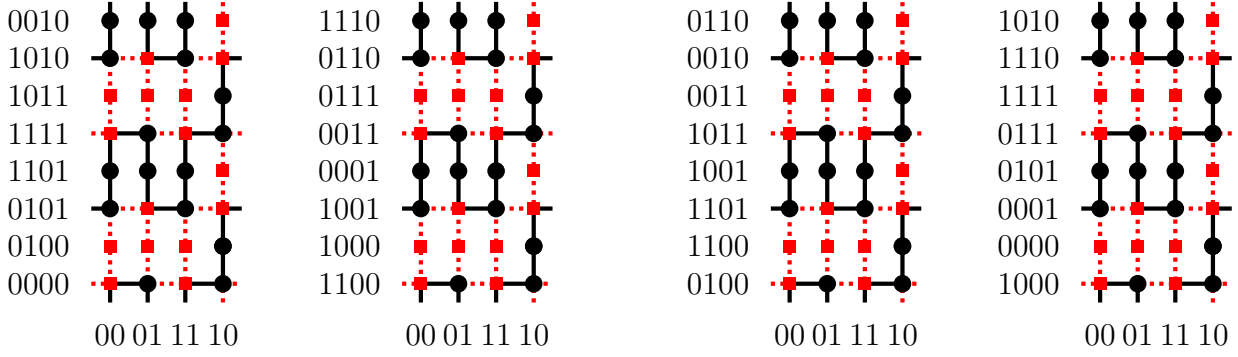

%%%%%%%%%%%%%%%%%%%%%%%%%%%%%%%%%%%%%%%%%%%%%%
\subsection{Decomposition of products without increasing cycle length}
%%%%%%%%%%%%%%%%%%%%%%%%%%%%%%%%%%%%%%%%%%%%%%

In this subsection, we prove, under two different splittability conditions, two propositions which imply that if $G$ has a decomposition into cycles of a given length, then $G \sq G$ has a decomposition into cycles of the same length.

\begin{prop}\label{dec}
If $G$ has an $(a,b)$-DR-splittable cycle decomposition into cycles of length $\ell$, then $G \sq G$ has an $(a|V(G)|, b|V(G)|)$-DR-splittable decomposition into cycles of length $\ell$.
\end{prop}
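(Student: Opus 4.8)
The plan is to avoid forming any tori altogether, so that cycle lengths are preserved at $\ell$. Write the given DR-splittable cycle decomposition of $G$ as $G = C_1 \cup \cdots \cup C_{am}$, with splitting sets $\F_1, \ldots, \F_m$, splitting subsets $\F_{i,1}, \ldots, \F_{i,a/b}$, and representing sets $S_s$ where each $(C_s, S_s)$ is distance regular. First I would exhibit a length-$\ell$ cycle decomposition of $G \sq G$ directly: for each cycle $C_s$ and each vertex $v \in V(G)$ take the ``vertical'' cycle $C_s \times \{v\}$, and for each vertex $u \in V(G)$ and each cycle $C_s$ take the ``horizontal'' cycle $\{u\} \times C_s$. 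Each is a copy of some $C_s$, hence a cycle of length $\ell$. Edge-disjointness and completeness are immediate from the fact that $\{C_s\}$ is an edge decomposition of $G$: since each $e \in E(G)$ lies in exactly one $C_s$, the vertical edge $(e,v)$ lies in exactly $C_s \times \{v\}$, so the vertical cycles partition the vertical edges of $G \sq G$, and symmetrically the horizontal cycles partition the horizontal edges. This produces $2am|V(G)|$ cycles of length $\ell$.

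Next I would organize these into the required $(a|V(G)|, b|V(G)|)$-splittable structure, taking the new parameter $m' = 2m$. For each original splitting set $\F_i$ I define two splitting sets of $G \sq G$: a vertical one $\{C_s \times \{v\} : C_s \in \F_i,\ v \in V(G)\}$ and a horizontal one $\{\{u\} \times C_s : u \in V(G),\ C_s \in \F_i\}$, each of cardinality $a|V(G)|$, giving $2m$ splitting sets in all. The representing set assigned to $C_s \times \{v\}$ is $S_s \times \{v\}$ and to $\{u\} \times C_s$ is $\{u\} \times S_s$. I would then check the splittable conditions: within a vertical splitting set the sets $S_s \times \{v\}$ are pairwise disjoint with union $(\bigcup_s S_s) \times V(G) = V(G) \times V(G)$, because the $S_s$ over $C_s \in \F_i$ partition $V(G)$; all these representing sets share the common cardinality $|V(G)|/a \ge 2$; and $(C_s \times \{v\}, S_s \times \{v\})$ is isomorphic to $(C_s, S_s)$, hence distance regular, so the DR condition is inherited. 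The horizontal splitting sets are handled symmetrically.

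Finally I would verify the splitting-subset refinement. For each $\F_{i,j}$ I set the vertical splitting subset $\{C_s \times \{v\} : C_s \in \F_{i,j},\ v \in V(G)\}$ and the analogous horizontal one. Since the $b$ cycles in $\F_{i,j}$ are pairwise vertex-disjoint and span $V(G)$, the vertex sets $V(C_s) \times \{v\}$ are pairwise disjoint and cover $V(G) \times V(G)$, so each such subset consists of $b|V(G)|$ pairwise vertex-disjoint cycles spanning $V(G \sq G)$. Each splitting set thus partitions into $a/b = (a|V(G)|)/(b|V(G)|)$ subsets of the correct size, confirming $(a|V(G)|, b|V(G)|)$-splittability with $a' = a|V(G)|$ and $b' = b|V(G)|$.

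The construction itself is short; the only genuine work is the bookkeeping, so I expect the main (if modest) obstacle to be verifying cleanly that the proposed splitting sets and subsets simultaneously meet all four requirements — equal-size representing sets, a partition of $V(G \sq G)$, the vertex-disjoint spanning property of the subsets, and distance regularity — with the parameters emerging exactly as claimed. In particular I would take care that ``spanning subgraph'' in the definition of a splittable decomposition forces the representing sets of each splitting set to exhaust all of $V(G \sq G)$, which is precisely why both the vertical and horizontal families must range over every vertex of the second and first factor, respectively.
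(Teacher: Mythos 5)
Your proposal is correct and follows essentially the same route as the paper: both decompose $G \sq G$ into the $2|V(G)|$ horizontal and vertical copies of $G$, copy the given cycle decomposition into each, and form $2m$ splitting sets (one horizontal and one vertical per original $\F_i$) with the images of the $S_s$ as representing sets and the images of the $\F_{i,j}$ as splitting subsets. The only difference is notational — you write the cycles as $C_s \times \{v\}$ and $\{u\} \times C_s$ rather than as images under the copy maps — and your verification of the four splittability conditions matches the paper's.
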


\begin{proof}
Let $\F_1, \ldots, \F_{m}$ be the splitting sets of the  $(a,b)$-DR-splittable cycle decomposition of $G$, with splitting subsets $\F_{i,1}, \ldots, \F_{i,a/b}$ for $1 \le i \le m$. Recall the definition of vertical and horizontal graphs and cycles given in Section~\ref{splittable}. The product $G \sq G$ can be decomposed into $2|V(G)|$ edge-disjoint copies of $G$:  $|V(G)|$ horizontal copies induced by $\{(u,v): v \in V(G)\}$ for a fixed $u \in V(G)$, and $|V(G)|$ vertical copies induced by $\{(u,v): u \in V(G)\}$ for a fixed $v \in V(G)$. Copy the cycle decomposition of $G$ into each of these copies to obtain a cycle decomposition of $G \sq G$. For $1 \le i \le m$, let $\F_i'$ consist of all images of the cycles in the splitting set $\F_i$ in the horizontal cycles. Then $\F_i'$ contains $a|V(G)|$ cycles.  For representing sets, assign to each cycle the image of its representing set from the decomposition of $G$.  Since the representing sets in $\F_i$ partition $V(G)$, the representing sets in $\F_i'$ partition $V(G \sq G)$, and are still distance regular. For $1 \le i \le m$, $i \le j \le a/b$, let the splitting subset $\F_{i,j}'$ contain the image of all cycles from $\F_{i,j}$ in the horizontal copies of $G$. Note that each $\F_{i,j}'$ contains $b|V(G)|$ cycles and the vertices in these cycles partition $V(G \sq G)$.  Doing the same thing with the vertical copies of $G$ creates more splitting sets $\F_i''$, with splitting subsets $\F_{i,j}''$, and together all of the splitting sets $\F_i'$ and $F_i''$ with splitting subsets $\F_{i,j}'$ and $\F_{i,j}''$ give an $(a|V(G)|, b|V(G)|)$-DR-splittable cycle decomposition of $G \sq G$ into cycles of length $\ell$.
\end{proof}

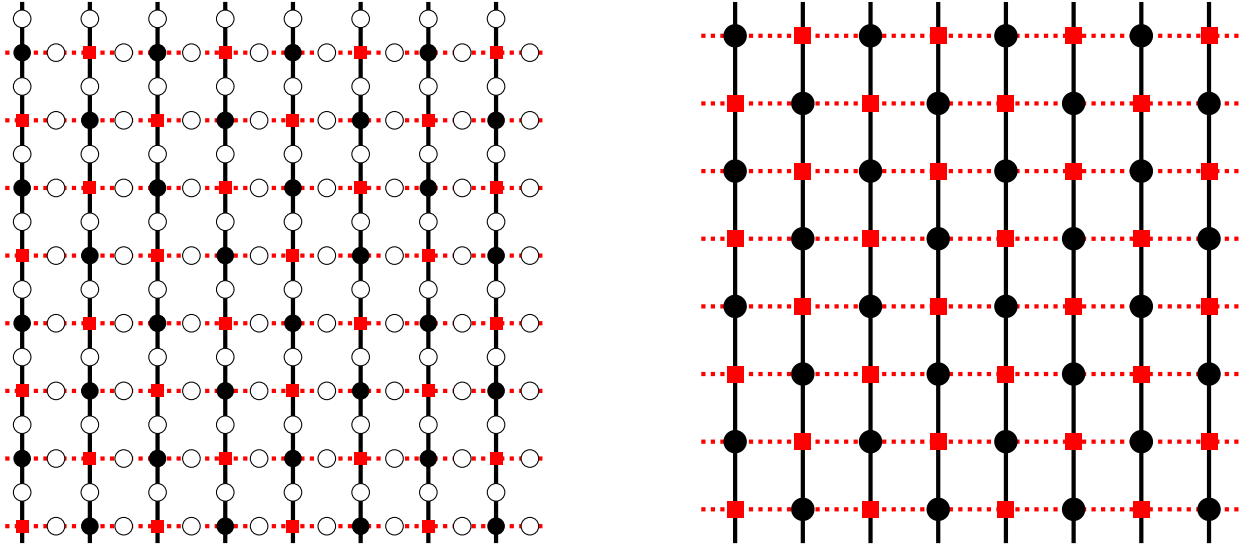
\begin{figure}
\begin{center}
\begin{tikzpicture}[scale=.45]  %2wiggle subdivided

    \foreach \x in {0,2, 4, 6, 8, 10, 12, 14}{
    \draw[ultra thick] (\x,-.5) -- (\x,15.5);
}
    \foreach \y in {0,2, 4, 6, 8, 10, 12, 14}{
    \draw[ultra thick, red, dotted] (-.5,\y) -- (15.5,\y);
}

    \foreach \x in {0, 4, 8, 12}{
    \foreach \y in {0, 4, 8, 12}{
      \node[style={rectangle,fill=red},scale=.6] at (\x,\y){};% [label=left:$01$]{};
}
}

    \foreach \x in {2, 6, 10, 14}{
    \foreach \y in {2, 6, 10, 14}{
      \node[style={rectangle,fill=red},scale=.6] at (\x,\y){};% [label=left:$01$]{};
}
}

    \foreach \x in {0, 4, 8, 12}{
    \foreach \y in {2, 6, 10, 14}{
      \node[style={circle,fill=black},scale=.6] at (\x,\y){};% [label=left:$01$]{};
}
}

    \foreach \x in {2, 6, 10, 14}{
    \foreach \y in {0, 4, 8, 12}{
      \node[style={circle,fill=black},scale=.6] at (\x,\y){};% [label=left:$01$]{};
}
}

    \foreach \x in {0, 2,4, 6, 8, 10, 12, 14}{
    \foreach \y in {1, 3, 5, 7, 9, 11, 13, 15}{
      \node[style={circle,draw, fill=white},scale=.6] at (\x,\y){};% [label=left:$01$]{};
}
}
    \foreach \y in {0, 2,4, 6, 8, 10,  12, 14}{
    \foreach \x in {1, 3, 5, 7, 9, 11, 13, 15}{
      \node[style={circle,draw, fill=white},scale=.6] at (\x,\y){};% [label=left:$01$]{};
}
}

\end{tikzpicture}
\hfill
\begin{tikzpicture}[scale=.9]  %2wiggle subdivided

    \foreach \x in {0,...,7}{
    \draw[ultra thick] (\x,-.5) -- (\x,7.5);
}
    \foreach \y in {0,...,7}{
    \draw[ultra thick, red, dotted] (-.5,\y) -- (7.5,\y);
}

    \foreach \x in {0, 2, 4, 6}{
    \foreach \y in {0, 2, 4, 6}{
      \node[style={rectangle,fill=red},scale=.8] at (\x,\y){};% [label=left:$01$]{};
}
}

    \foreach \x in {0, 2, 4, 6}{
    \foreach \y in {1, 3, 5, 7}{
      \node[style={circle,fill=black},scale=.8] at (\x,\y){};% [label=left:$01$]{};
}
}

    \foreach \x in {1, 3, 5, 7}{
    \foreach \y in {0, 2, 4, 6}{
      \node[style={circle,fill=black},scale=.8] at (\x,\y){};% [label=left:$01$]{};
}
}

    \foreach \x in {1, 3, 5, 7}{
    \foreach \y in {1, 3, 5, 7}{
      \node[style={rectangle,fill=red},scale=.8] at (\x,\y){};% [label=left:$01$]{};
}
}
\end{tikzpicture}

\end{center}
\caption{An illustration of the assignment of the degree four vertices in the subdivided tori to representing sets in Proposition~\ref{GboxG}. Left: The red square vertices are in the representing set for the red dotted horizontal cycles, and the black circular vertices are in the representing sets for the vertical black cycles.  Right: The corresponding cycles and vertices in the underlying torus.}\label{2lsplit}
\end{figure}

\begin{prop}\label{GboxG}
Let $G$ be a graph with an $(a,b)$-DR-splittable decomposition into cycles of length $\ell$, where $|V(G)|/a$ is even and greater than two.  Then $G \sq G$ has a  $(2a|V(G)|,b|V(G)|)$-DR-splittable decompositon into cycles of length $\ell$, where each representing set has cardinality at least two.
\end{prop}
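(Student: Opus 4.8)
The plan is to follow the strategy of Proposition~\ref{dec}, copying the cycle decomposition of $G$ onto the horizontal and vertical copies of $G$, but now to combine a horizontal copy and a vertical copy inside a single splitting set; this halves the size of each representing set and is what produces the factor $2a$ rather than $a$ in the conclusion. Concretely, write $G_i=\bigcup_{C\in\F_i}C$ for the spanning subgraph carried by the splitting set $\F_i$. Then $G_1,\dots,G_m$ decompose $G$ into $m$ spanning subgraphs, so Proposition~\ref{spanning} gives $G\sq G=\bigcup_{i=1}^m G_i\sq G_i$, and applying Proposition~\ref{decomp} to the splittable decomposition $\F_i$ of $G_i$ on each factor yields
\[
G\sq G=\bigcup_{i=1}^m\ \bigcup_{C_s,C_t\in\F_i}(C_s,S_s)\anc(C_t,S_t).
\]
This realizes $G\sq G$ as an edge-disjoint union of subdivided tori.

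The key observation is that, since both factors are decomposed into cycles of the same length $\ell$, each subdivided torus $(C_s,S_s)\anc(C_t,S_t)$ already decomposes into cycles of length $\ell$ with no wiggling needed: its $|S_t|$ vertical cycles are full copies of $C_s$ and its $|S_s|$ horizontal cycles are full copies of $C_t$, and the vertical and horizontal cycles together partition the edge set of the torus. Summing $|S_s|+|S_t|=2|V(G)|/a$ over the $a^2$ tori in each of the $m$ blocks gives exactly $2am|V(G)|$ cycles of length $\ell$.

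The heart of the argument is the assignment of representing sets. For each $i$ I would collect all horizontal and vertical cycles arising from the tori of the $i$th block into one prospective splitting set $\mathcal H_i$ of $2a|V(G)|$ cycles, and check that the $b|V(G)|$ horizontal cycles that are copies of a fixed splitting subset $\F_{i,j}$ are vertex-disjoint and spanning, and likewise for the vertical cycles; this supplies the $2a/b$ splitting subsets. To fix the representing sets, assign each vertex $(u,v)$ to the horizontal or the vertical cycle through it by a parity rule: let $C^H,C^V\in\F_i$ be the unique cycles with $v\in S_{C^H}$ and $u\in S_{C^V}$, and put $(u,v)$ in the representing set of the horizontal cycle on $\{u\}\times V(C^H)$ when the parities of the position of $u$ in $S_{C^V}$ and of $v$ in $S_{C^H}$ agree, and in the representing set of the vertical cycle on $V(C^V)\times\{v\}$ otherwise (see Figure~\ref{2lsplit}). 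Because the sets $S_C$ partition $V(G)$, each vertex is a candidate for exactly one horizontal and one vertical representing set, and since the parity rule selects exactly one of these, the resulting sets partition $V(G\sq G)$. For a fixed horizontal cycle the selected vertices are precisely the elements of $S_{C^H}$ of one fixed index parity, i.e.\ every other element of $S_{C^H}$ along the cycle; as $(C^H,S_{C^H})$ is distance regular, this thinned set is again distance regular (its gaps are doubled), and symmetrically for vertical cycles, which yields the DR condition.

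The step I expect to be the main obstacle is exactly this simultaneous requirement that the representing sets partition $V(G\sq G)$ while remaining distance regular on every cycle; the parity/checkerboard selection is designed to reconcile the two. The two hypotheses enter here precisely: distance regularity survives the thinning only because $|S_C|=|V(G)|/a$ is even, so each $S_C$ splits into two equal and equally spaced halves, and the common representing-set cardinality $|V(G)|/(2a)$ is at least two exactly because $|V(G)|/a>2$. It then remains only to record the bookkeeping — that the sets $\mathcal H_1,\dots,\mathcal H_m$ partition all the cycles, that each carries $2a/b$ splitting subsets of $b|V(G)|$ vertex-disjoint spanning cycles, and that every representing set has the common cardinality $|V(G)|/(2a)\ge 2$ — to conclude that the decomposition is $(2a|V(G)|,b|V(G)|)$-DR-splittable.
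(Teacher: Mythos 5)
Your proposal is correct and follows essentially the same route as the paper's proof: the same reduction via Propositions~\ref{spanning} and \ref{decomp} to subdivided tori, the same use of their horizontal and vertical cycles as the $2am|V(G)|$ cycles, and the same splitting subsets built from horizontal (resp.\ vertical) copies of each $\F_{i,j}$. Your parity rule for assigning each degree-four vertex to a horizontal or vertical cycle is precisely the paper's proper two-coloring of the underlying torus, including the identical use of the hypotheses that $|V(G)|/a$ is even (so the alternating selection is consistent and distance regular) and greater than two (so each representing set has at least two vertices).
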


\begin{proof}
Let  $\F_1, \ldots, \F_{m}$ be the splitting sets of of the $(a,b)$-DR-splittable decomposition of $G$. 
First we shall only use the property that this decomposition is $a$-DR-splittable.
Let $G_i$ denote the union of graphs in $\F_i$, and note that each $G_i$ is a spanning subgraph of $G$. 
By Proposition~\ref{spanning},  $G \sq G$ can be decomposed as
$\displaystyle
G \sq G = \bigcup_{i=1}^m G_i \sq G_i.
$

We now focus on decomposing each of the products $G_i \sq G_i$ in the union. Let $\F_i = \{C_1, \ldots, C_a\}$, with representative sets $S_1, \ldots, S_a$. 
By Proposition~\ref{decomp}, $G_i \sq G_i$ can be decomposed as
$\displaystyle{
G_i \sq G_i = \bigcup_{C_s \in \F_i} \bigcup_{C_t\in \F_i}  (C_s, S_s) \anc (C_t, S_t).
}$

Since $|S_s| = |V(G)|/a$,  for $s=1, \ldots, a$, each of the $a^2$ subdivided tori $(C_s,S_{s}) \anc (C_{t},S_t)$ has $|V(G)|/a$ vertical cycles and $|V(G)|/a$ horizontal cycles, each of length $\ell$.  We choose  the set $\F_i'$ of all of the horizontal and vertical cycles in all $a^2$ subdivided tori as our decomposition of $G_i\sq G_i$, and thus $|{\F}_i' |= 2(|V(G)|/a)a^2=2a|V(G)|$, $i = 1, \ldots,  m$. We now assign representing sets as illustrated in Figure~\ref{2lsplit} (left):
Each vertex in $V(G_i \sq G_i)=V(G \sq G)$ appears once as a degree $4$ vertex in exactly one of the subdivided tori $(C_s,S_{s}) \anc (C_{t},S_t)$.  Thus to assign each vertex in $G \sq G$ to exactly one representing set, we only assign to a given cycle degree four vertices from its subdivided torus, and we can instead focus on the underlying torus, as shown in Figure~\ref{2lsplit} (right).  In the underlying torus, properly two-color the vertices red and black, assigning the red vertices to be the representing sets of the horizontal cycle that they are on, and assigning the black vertices to be the representing sets for the vertical cycles they are on.  Since there is only one proper two-coloring, and this coloring alternates red and black on every horizontal and vertical cycle, each representing set is the same cardinality.  Further, since every other vertex is chosen, in the subdivided torus, these representing sets split the cycles from ${\cal F}_i'$ into paths twice as long as the corresponding paths on cycles in ${\cal F}_i$ with the original representing sets. This shows that the resulting decomposition with splitting sets $\F_1', \ldots, \F_m'$ is a $2a|V(G)|$-DR-splittable decomposition of $G\sq G$.
Note that we need $|V(G)|/a>2$ so that every cycle in ${\cal F}_i'$ has at least $2$ vertices in its representing set (Recall this is required in the definition of representing set).

Now that we have a $2a|V(G)|$-DR-splittable decomposition of $G\sq G$ with splitting sets $\F_1', \ldots, \F_m'$, we show that it is also a $(2a|V(G)|,b|V(G)|)$-DR-splittable decompositon. Since  $\F_1, \ldots, \F_m$ are splitting sets of an $(a,b)$-DR-splittable decomposition, each family $\F_i$ can be partitioned into splitting subsets $\F_{i,j}$,  each consisting of $b=|V(G)|/\ell$ cycles in $\F_i$ that are pairwise vertex disjoint and span $V(G)$, $j=1, \ldots, a/b$.

For $1 \le j \le a/b$, let $\F(V)_{i,j}'$ be all of the vertical cycles in the subdivided tori 
\[
\bigcup_{C_s\in \F_{i,j}} \bigcup_{C_t\in \F_i}  (C_s, S_s) \anc (C_t, S_t)
\]
and  let $\F(H)_{i,j}'$ be all of the horizontal cycles in the subdivided tori 
\[
\bigcup_{C_s\in \F_i} \bigcup_{C_t\in \F_{i,j}}  (C_s, S_s) \anc (C_t, S_t).
\]

For all $i$ and $j$, $\F(V)_{i,j}'$ contains a vertical copy of every cycle in $\F_{i,j}$ for every vertex in $G$. Thus it contains  $b|V(G)|$ cycles, and these cycles partition the vertices of $G \sq G$. Similarly, $\F(H)_{i,j}'$ contains a horizontal copy of every cycle in $\F_{i,j}$ for every vertex in $G$. Thus it contains  $b|V(G)|$ cycles, and these cycles partition the vertices of $G \sq G$. Finally, the union of all such sets is $\F_i'$, so the $\F(V)_{i,j}'$ and $\F(H)_{i,j}'$ are the required splitting subsets.
\end{proof}

%%%%%%%%%%%%%%%%%%%%%%%%%%%%%%%%%%%%%%%%%%%%%%
%%%%%%%%%%%%%%%%%%%%%%%%%%%%%%%%%%%%%%%%%%%%%%
%%%%%%%%%%%%%%%%%%%%%%%%%%%%%%%%%%%%%%%%%%%%%%
%\section{Proof of Theorem \ref{main}}\label{GENproofs}
%%%%%%%%%%%%%%%%%%%%%%%%%%%%%%%%%%%%%%%%%%%%%%
%%%%%%%%%%%%%%%%%%%%%%%%%%%%%%%%%%%%%%%%%%%%%%
%%%%%%%%%%%%%%%%%%%%%%%%%%%%%%%%%%%%%%%%%%%%%%

\section{Proofs of the main results}\label{importantproofs}

First we shall prove a result about hypercube decompositions into cycles whose lengths are powers of $2$. %Then, we prove the main theorem and its corollary.

\begin{lem}\label{m1m2GENERAL}
Let $x \ge 1$ be odd. For integers $n\ge 1$ and $\ell \ge 2$ where $2x \le \ell \le x2^n$, $Q_{x2^n}$ has a $(2^m, 2^{x2^n-\ell})$-DR-splittable decomposition into cycles of length $2^\ell$ for each $m$, 
\[x2^n-\ell \le m \le \min\{x2^n-1, x2^n-1+n-\ell \}.\]
\end{lem}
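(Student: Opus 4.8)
The plan is to induct on $n$. For the base case $n=1$ we have $Q_{2x}$, where the constraints $2x\le \ell\le 2x$ and $0\le m\le\min\{2x-1,0\}$ force $\ell=2x$ and $m=0$; the required object is then simply a Hamiltonian decomposition of $Q_{2x}$, which exists since $2x$ is even, regarded as a $(1,1)$-DR-splittable decomposition in which each Hamiltonian cycle is its own representing set (trivially distance regular). For the inductive step I would write $Q_{x2^n}=Q_N\sq Q_N$ with $N=x2^{n-1}$ and split into two regimes according to whether the target cycles can fit inside a single factor.

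In the short-cycle regime $2x\le \ell\le N$, the inductive hypothesis supplies a $(2^{m'},2^{N-\ell})$-DR-splittable decomposition of $Q_N$ into cycles of length $2^\ell$ for every $m'$ with $N-\ell\le m'\le\min\{N-1,N-2+n-\ell\}$. Applying Proposition~\ref{dec} to $Q_N\sq Q_N$ promotes each such decomposition to a $(2^{m'+N},2^{x2^n-\ell})$-DR-splittable one, giving $m=m'+N$, and applying Proposition~\ref{GboxG} (valid when $2^{N-m'}>2$, i.e.\ $m'\le N-2$) gives $m=m'+N+1$. A short computation shows that the two shifted ranges together cover exactly $[\,x2^n-\ell,\ \min\{x2^n-1,x2^n-1+n-\ell\}\,]$, so this regime needs no further idea: the range is inherited and translated.

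In the long-cycle regime $N<\ell\le x2^n$, cycles of length $2^\ell>2^N=|V(Q_N)|$ cannot lie inside a factor, so they must be produced by a length-increasing combination. Here I would generalize Lemma~\ref{p7gen} from the $2$-wiggle to a $k$-wiggle with $k=2^\kappa$: decompose $Q_N\sq Q_N$ into subdivided tori by Propositions~\ref{spanning} and \ref{decomp}, and apply the $k$-wiggle (Proposition~\ref{2even}) to each, choosing $\kappa$ so that the common cycle length is exactly $2^\ell$; Propositions~\ref{kwiggle}, \ref{subsplit}, and \ref{dregprod} then supply the distance-regular representing sets. The structural fact driving the full $m$-range is that every vertex of $Q_{x2^n}$ lies on exactly $N=x2^{n-1}$ of the resulting cycles, so the cycles can be grouped into $M$ splitting sets for each $M=x2^{j}$ with $0\le j\le n-1$. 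Since the number of cycles is $x2^{\,n+x2^n-1-\ell}$, this realizes precisely the values $m=\log_2(x2^{\,n+x2^n-1-\ell}/M)=x2^n-1+n-\ell-j$ as $j$ runs from $0$ to $n-1$, i.e.\ the entire target interval.

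The hard part will be this regrouping in the long-cycle regime. Passing to a finer value of $M$ forces each distance-regular representing set to be subdivided into smaller ones that still partition $V(Q_{x2^n})$, and this is exactly where distance regularity is indispensable: halving a distance-regular set along its cycle (keeping every other representing vertex) again yields distance-regular sets with doubled spacing, and the fact that the wiggle cycles together with their representing sets are mutual translates (Proposition~\ref{kwiggle}) is what makes the halves coming from different splitting sets fit together into one partition. Checking that the $k$-wiggle applicability conditions ($k\mid |S_s|$, $|S_t|\ge k$, and $|S_t|\equiv k\bmod 2$) hold for the tori that actually arise, and that the binary exponents align so that the common length is exactly $2^\ell$ and the representing sets stay even, is the bulk of the remaining bookkeeping.
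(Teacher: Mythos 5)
Your overall architecture is the paper's: the Aubert--Schneider Hamiltonian decomposition as the $(1,1)$-DR-splittable base case, and a split into a short-cycle and a long-cycle regime. Your short-cycle regime is exactly the paper's argument (its Cases 1 and 2): Proposition~\ref{dec} translates the inherited range of $m$ by $N=x2^{n-1}$, and Proposition~\ref{GboxG} supplies the single missing top value $m=m'+N+1$ when $n<\ell$; your range bookkeeping there checks out.

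The gap is in the long-cycle regime, and it is concrete. You propose to form \emph{subdivided} tori $(C_s,S_s)\anc(C_t,S_t)$ via Propositions~\ref{spanning} and \ref{decomp} and then wiggle those, choosing $k=2^{\kappa}$ per torus to normalize the cycle length. Two things fail. First, if the factor decomposition used is $(2^{m'},1)$-DR-splittable, each anchored product has $2\cdot 2^{N-m'}\cdot 2^{N}=2^{x2^n-m'+1}$ edges, so hitting length $2^\ell$ forces $k=2^{x2^n-m'+1-\ell}$, and $k\ge 2$ forces $m'\le x2^n-\ell$. At the top of the regime---in particular at $\ell=x2^n$, the Hamiltonian case that your own induction must output in full strength to feed the next level---this pins $m'=0$, so you obtain only the groupings $M=x2^{n-2}$ (whole tori) and $M=x2^{n-1}$ (half-tori), i.e.\ $m\in\{0,1\}$ rather than the full interval $[0,n-1]$; the loss then propagates up the induction. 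Second, even when $m'\ge 1$ is admissible, the representing sets Proposition~\ref{subsplit} produces on a properly subdivided torus are \emph{not} distance regular: along a wiggle cycle they consist of runs of consecutive subdivision vertices inside columns (spacing $1$) separated by jumps across horizontal segments (spacing $2^{m'}$ or more). Since the lemma's conclusion is DR-splittability, and every consumer of it (Propositions~\ref{dec}, \ref{GboxG}, \ref{dregprod}, your own short-cycle step, and Lemma~\ref{p7gen} in the proof of Theorem~\ref{main}) genuinely requires DR inputs, this is fatal as stated. Relatedly, your counting fact (every vertex lies on exactly $N$ of the resulting cycles) is necessary for the claimed groupings but far from sufficient; it does not by itself yield representing sets partitioning $V(Q_{x2^n})$.

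The paper's fix, which your plan is one step away from: in this lemma the tori are never subdivided. One writes $Q_{x2^n}=Q_N\sq Q_N=\bigcup_C C\sq C$ over the Hamiltonian cycles $C$ of $Q_N$ using Proposition~\ref{spanning} alone (Proposition~\ref{decomp} is not invoked), so $k=2^{x2^n+1-\ell}$ is the same for every torus and is independent of $m'$. The different values of $m$ are realized not by changing the tori but by changing the representing-set structure fed into Proposition~\ref{dregprod}: for each $m'\in[0,n-2]$ the $(2^{m'},1)$-DR-splittable Hamiltonian decomposition of $Q_N$ supplies distance-regular sets $S_C$ with $\{V(C)\times S_C: C\in\F_i\}$ partitioning $V(Q_N)\times V(Q_N)$, and Proposition~\ref{dregprod} (applied with $S'=S_C$) gives distance-regular representing sets for the wiggle cycles, yielding the coarse groupings $M=x2^{n-2-m'}$. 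The finest grouping $M=x2^{n-1}$, and simultaneously the splitting subsets witnessing the second parameter $b=2^{x2^n-\ell}$, come from Proposition~\ref{kwiggle}'s $k/2$-DR-splittability of each individual torus. Your intuition about halving distance-regular sets and exploiting the translation structure is the right heuristic for why these groupings coexist, but the actual mechanism is varying $m'$ inside Proposition~\ref{dregprod}, not subdividing the tori.
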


\begin{proof}

Let $x$ be an odd  positive integer. We have to prove the statement of the lemma for pairs $(\ell,n)$ in the allowed range.  These pairs are pictured as dots in Figure~\ref{schematic}, which contains a visualization of the order in which the cases are proved in the case $x=1$.
First we shall prove a claim that the lemma is true for pairs $(\ell,n)$ when $x2^{n-1} < \ell \le x2^n$.   These are the cases pictured as empty dots in Figure~\ref{schematic}. \\

{\bf Claim.}  For any $n\geq 1$ the following holds: if $\ell \geq 2x$   and    $x2^{n-1} < \ell \le x2^n$, then   $Q_{x2^n}$ has a  $(2^m, 2^{x2^n-\ell})$-DR-splittable decomposition into cycles of length $2^\ell$ for any $m$ such that   $x2^n-\ell \le m \le \min\{x2^n-1, x2^n-1+n -\ell\}$. \\

We shall prove  the claim by induction on $n$.   Note that $n < \ell$ for all cases considered in the claim, so  $\min\{x2^n-1, x2^n-1 +n -\ell\}= x2^n-1+n-\ell$. 

Base case $n=1$. If $n=1$ then we must have $\ell=2x$. Note that $x2^1 - \ell = 2x-2x=0$, and $x2^1-1 + 1- \ell  = 2x - 1 + 1 - 2x = 0$, so we seek a $(2^0,2^0)=(1,1)$-DR-splittable decomposition of $Q_{x2^1}=Q_{2x}$.  By the result of Aubert and Schneider~\cite{aubert} $Q_{2x}$ has a Hamiltonian decomposition into cycles of length $2^{2x}$, which is a $(1,1)$-DR-splittable decomposition of $Q_{2x}$. 

Assume the statement is true for some $n$, and fix $\ell$ such that $\ell \geq 2x$ and  $x2^n < \ell \le x2^{n+1}$. By the inductive hypothesis, using the case $\ell = x2^n$, $Q_{x2^n}$ has an $(a,b)$-DR-splittable cycle decomposition for $b=1$ and $a=2^{m'}$ for all $0 \le m' \le n-1$.
Since $b=1$, all cycles in this decomposition are Hamiltonian, with length $2^{x2^n}$.  For any $m'$ where $0 \le m' \le n-1$, suppose the splitting sets of the cycles in the $(2^{m'},1)$-DR-splittable decomposition of $Q_{x2^n}$ are $\F_1, \ldots, \F_{x2^{n-1-m'}}$.  Since all cycles are Hamiltonian, the splitting subsets $\F_{i,j}$ contain one cycle each. Then by Proposition~\ref{spanning},
$$Q_{x2^{n+1}} = Q_{x2^{n}} \sq Q_{x2^{n}} =   \bigcup_{i=1}^{x2^{n-1-m'}} \bigcup_{C\in \F_i } C \sq C.   $$ 
This gives a decomposition of $Q_{x2^{n+1}}$ into $x2^{n-1}$ tori $C \sq C$ where each cycle $C$ has length $2^{x2^n}$.  Thus each torus is a spanning subgraph of $Q_{x2^{n+1}}$, and has $2\cdot 2^{x2^n} \cdot 2^{x2^n} = 2^{x2^{n+1}+1}$ edges.  Let
$k = 2^{x2^{n+1}- \ell +1}$, and note that since $x2^n < \ell \le x2^{n+1}$,  $k$ could take any value of $2^{k'}$ where $1 \le k' \le x2^n$. Since $k$ is even and divides $2^{x2^n}$, each torus allows the $k$-wiggle decomposition, which results in each torus being decomposed into $k$ cycles, each with length $|E(C \sq C)|/k = 2^{x2^{n+1}+1}/2^{x2^{n+1}- \ell +1} = 2^\ell$.

Now we show the decomposition produced by applying the $k$-wiggle decomposition to each torus is $(2^m,2^{x2^{n+1}- \ell})$-DR-splittable for all values of $m$ where $x2^{n+1}- \ell \le m \le x2^{n+1}-1+(n+1)-\ell$. 

First we show it is  $2^m$-DR-splittable for all values of $m$ where $x2^{n+1}- \ell +1 \le m \le x2^{n+1}-1+(n+1)-\ell$:  For splitting sets, let $\F_i'$ be the set of $k2^{m'}$ cycles decomposing the tori $ \bigcup_{C\in \F_i } C \sq C$. Since the horizontal cycles in the tori have distance regular representing sets,  Proposition~\ref{dregprod} guarantees that the $k2^{m'}$ cycles in $\F_i'$ yielded by the decomposition of the tori generated by a splitting set $\F_i$ are a $k2^{m'}$-DR-splittable decomposition of $ \bigcup_{C\in \F_i } C \sq C$.  For the values $0 \le m' \le n-1$, the values of $k2^{m'}$ take on any value of $2^m$ where $x2^{n+1} - \ell + 1 \le m \le x2^{n+1} - 1+ (n+1) - \ell$. 

Next we show this decomposition is also $2^{x2^{n+1} - \ell}$-DR-splittable: 
%Consider the case the case $m'=0$, where each $\F_i$ just contains one cycle.  
Since all choices of $k$ we consider are even, Proposition~\ref{kwiggle} guarantees that the set of cycles decomposing each torus in $C \sq C$ is $k/2=2^{x2^{n+1}-\ell}$-splittable, where the representing sets for each cycle contain all vertices of the cycle. 
In this case define each splitting set ${\cal H}_i'$ to be a set of $k/2$ cycles given by Proposition~\ref{kwiggle} that partition the vertices of $C \sq C$ (i.e. the set of even-indexed cycles in the $k$-wiggle decomposition or the set of odd-indexed cycles). Since the distance between consecutive vertices in the representing sets is 1, we obtain a $2^{x2^{n+1}-\ell}$-DR-splittable decomposition.

Finally we show that the decomposition is $(2^m,2^{x2^{n+1}- \ell})$-DR-splittable for all values of $m$ where $x2^{n+1}- \ell \le m \le x2^{n+1}-1+(n+1)-\ell$:
Note that the splitting sets ${\cal H}_i'$ in the $2^{x2^{n+1} - \ell}$-DR-splittable decomposition contain half the cycles in a given torus, and these cycles partition the vertices of $Q_{x2^{n+1}}$, while the splitting sets $\F_i'$ in the $2^m$-DR-splittable decomposition where $m > x2^{n+1} - \ell$ contain all cycles in one or more tori.  Thus the ${\cal H}_i'$ sets partition the $\F_i'$ sets. Thus these ${\cal H}_i'$ sets can serve as the splitting subsets for all of the other decompositions, and we have a  $(2^m,2^{x2^{n+1}-\ell})$-DR-splittable decomposition for every $x2^{n+1}-\ell \le m \le x2^{n+1}-1+(n+1)-\ell$.
This completes the proof of the claim.\\

Now, we shall prove the statement of the lemma. Fix an integer $\ell$, $\ell \geq 2x$. Let $n$ be a positive integer such that $2x \le \ell \le x2^n$.  Let $n'$ be a positive integer such that $x2^{n'-1} < \ell \leq x2^{n'}$.  We see that $n\geq n'$.
We shall prove the statement of the proposition by induction on $n-n'$.
If $n-n'=0$, i.e., $n=n'$,  we are done by the claim.
Assume that the statement of the lemma holds for $n\geq n'$, i.e. $Q_{x2^n}$ has a $(2^m,2^{x2^n - \ell})$-DR-splittable decomposition into cycles of length $2^\ell$ for every $x2^n-\ell \le m \le x2^n-1+n-\ell$. We now prove the statement for $n+1$.

Case 1.  $n<\ell$. These cases are represented by the blue dots in Figure~\ref{schematic}.
Since $Q_{x2^{n+1}} = Q_{x2^n} \sq Q_{x2^n}$ and $|V(Q_{x2^n})| = 2^{x2^n}$, applying Proposition~\ref{dec} with $a= 2^{m'}$ for $x2^n-\ell \le m' \le x2^n-1+n-\ell$ and $b = 2^{x2^n - \ell}$ gives a $(a', b')$-DR-splittable decomposition where $b' = 2^{x2^n-\ell}2^{x2^n} = 2^{x2^{n+1}-\ell}$, and $a'$ can be $2^m$ for any value of $m$ from $(x2^n-\ell)+x2^n = x2^{n+1}-\ell$ to  $(x2^n-1+n-\ell)+x2^n =  x2^{n+1}-2 + (n+1) - \ell$. 
It remains to show that $Q_{x2^{n+1}}$ has a  $(2^{x2^{n+1}-1 + (n+1) - \ell},2^{x2^{n+1} - \ell})$-DR-splittable decomposition.
Applying Proposition~\ref{GboxG} to $Q_{x2^{n+1}} = Q_{x2^n} \sq Q_{x2^n}$ with $a = 2^{x2^n -1+ n - \ell}$, $b = 2^{x2^n - \ell}$, and $|V(G)| = |V(Q_{x2^n})| = 2^{x2^n}$, 
we get an $(a', b')$-DR-splittable decomposition with 
\[a'=2a |V(Q_{x2^n})| = 2\cdot 2^{x2^n -1+ n - \ell }  \cdot 2^{x2^n}= 2^{x2^{n+1} -1+ (n+1) - \ell}\]
and 
\[b'= b|V(Q_{x2^n})| = 2^{x2^{n+1} -\ell}.\]

Case 2.  $n \ge \ell$. These cases are represented by the red squares in Figure~\ref{schematic}, and follow from applying Proposition~\ref{dec} exactly as in Case 1. Since $n \ge \ell$, in this case $\min\{x2^n-1, x2^n-1 +n -\ell\}= x2^n-1$, so Proposition~\ref{GboxG} is not needed.
\end{proof}

\begin{figure}

\begin{center}

\begin{tikzpicture}[scale=.4]

\draw[thick,->] (0,0) -- (0,18) node[anchor=south east] {$n$};
\draw[thick,->] (0,0) -- (18,0) node[anchor=north west] {$\ell$};

\foreach \x in {0,2, 4, 6, 8, 10, 12, 14, 16}
   \draw (\x cm,1pt) -- (\x cm,-1pt) node[anchor=north] {$\x$};
\foreach \y in {0,2, 4, 6, 8, 10, 12, 14, 16}
    \draw (1pt,\y cm) -- (-1pt, \y cm) node[anchor=east] {$\y$};

    \foreach \x in {2}{
      \node[style={circle, draw, fill=white},scale=.4] at (\x,1){};% [label=left:$01$]{};
}

    \foreach \x in {2}{
    \foreach \y in {2}{
      \node[style={circle,fill=blue},scale=.4] at (\x,\y){};% [label=left:$01$]{};
}
}

    \foreach \x in {3,4}{
      \node[style={circle, draw, fill=white},scale=.4] at (\x,2){};% [label=left:$01$]{};
}

    \foreach \y in {3}{
    \foreach \x in {3,4}{
      \node[style={circle,fill=blue},scale=.4] at (\x,\y){};% [label=left:$01$]{};
}
}

    \foreach \x in {5, ..., 8}{
      \node[style={circle, draw, fill=white},scale=.4] at (\x,3){};% [label=left:$01$]{};
}

    \foreach \y in {4}{
    \foreach \x in {4, ..., 8}{
      \node[style={circle,fill=blue},scale=.4] at (\x,\y){};% [label=left:$01$]{};
}
}

    \foreach \x in {9, ..., 16}{
      \node[style={circle, draw, fill=white},scale=.4] at (\x,4){};% [label=left:$01$]{};
}

    \foreach \x in {5, ..., 16}{
    \foreach \y in {5, ..., \x}{
      \node[style={circle,fill=blue},scale=.4] at (\x,\y){};% [label=left:$01$]{};
}
}

    \foreach \x in {2, ..., 15}{
    \foreach \y in {\x, ..., 15}{
      \node[style={rectangle,fill=red},scale=.4] at (\x,\y+1){};% [label=left:$01$]{};
}
}

\draw[thick, dotted] (0,0) -- (16,16) {};

\node at (16, 16) [label=above right:{$n=\ell$}]{};

\draw[thick,->] (17,7) -- (17,9);
\node at (17,8) [label=right:{{\color{blue}Propositions~\ref{dec} and \ref{GboxG}}}]{};

\draw[thick,->] (3,17) -- (3,19);
\node at (3,18) [label=right:{{\color{red}Proposition~\ref{dec}}}]{};

\end{tikzpicture}
\end{center}
\caption{Schematic for proof of Lemma~\ref{m1m2GENERAL} in the case $x=1$.  The lemma in the cases $(\ell,n)$ represented by the empty dots are proved in the initial claim.  Then for a given $(\ell,n)$ where $n < \ell$ where Lemma~\ref{m1m2GENERAL} holds, Propositions~\ref{dec} and \ref{GboxG} are used in the induction to prove the lemma in the case $(\ell,n+1)$ (blue dots below the line $n=\ell$).  Finally, for a given $(\ell,n)$ where $n \ge \ell$ where Lemma~\ref{m1m2GENERAL} holds, only Proposition~\ref{dec} is needed in the induction to prove the lemma in the case $(\ell,n+1)$ (red squares above the line $n=\ell$). }\label{schematic}
\end{figure}
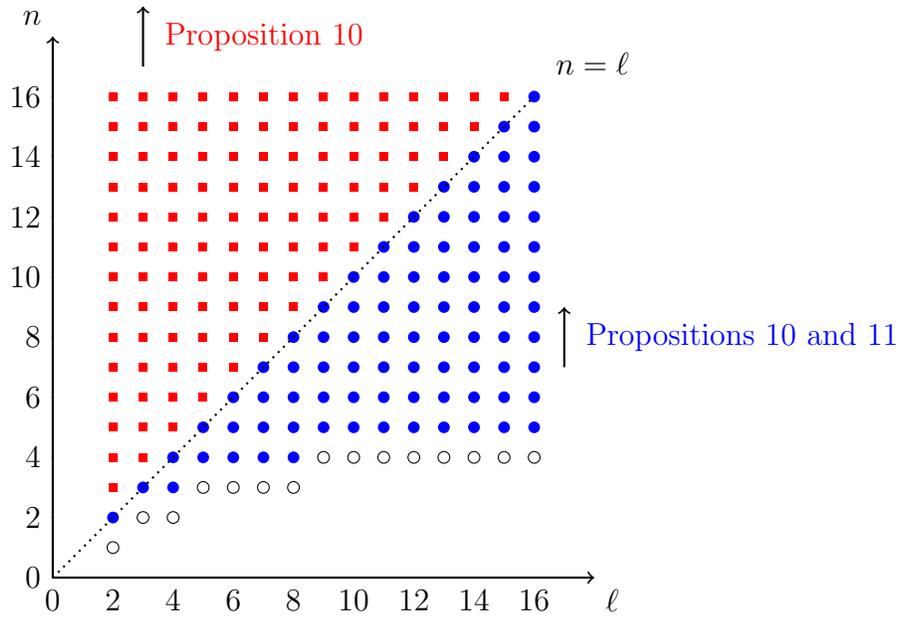

%\vskip 1cm

%%%%%%%%%%%%%%%%%%%%%%%%%%%%%%%%%%%%%%
%%%%%%%%%%%%%%%%%%%%%%%%%%%%%%%%%%%%%%

\subsection{Proofs of Theorems \ref{main}, \ref{paths}, and Corollary \ref{upshot}} \label{importantproofs}

%%%%%%%%%%%%%%%%%%%%%%%%%%%%%%%%%%%%%%
%%%%%%%%%%%%%%%%%%%%%%%%%%%%%%%%%%%%%%
%%%%%%%%%%%%%%%%%%%%%%%%%%%%%%%%%%%%%%
%We are now ready to prove the main theorem.  

\begin{proof}[Proof of Theorem \ref{main}]

We actually prove the following stronger statement: Let $n =xy2^\alpha$,  where $x, y \ge 1$ are odd, and $\alpha \ge 1$. Suppose $y$ has binary representation $y = 2^{i_1} + 2^{i_2} + \cdots + 2^{i_j}$, where $i_1 > i_2 > \cdots > i_j=0$. Then for $0 \le q \le 
n - i_1 - 2xj$, $Q_n$ has a $2^{j-1+q}$-splittable decomposition into $x2^{i_1 + \alpha +j-2+q}$ cycles of the same length.

We shall use induction on $j$.

Base case $j=1$.  If $j=1$, then  $y=2^0=1$, so $i_1 = 0$ and  $n=x2^{\alpha}$, where $\alpha \ge 1$.   Lemma~\ref{m1m2GENERAL} implies that $Q_{x2^{\alpha}}$ has a $2^{x2^{\alpha} - \ell}$-splittable decomposition into $x2^{x2^{\alpha} - 1 + \alpha - \ell}$ cycles  of length $2^\ell$ for each $2x \le \ell \le x2^{\alpha}$. 
Assigning $\ell$ all values in the range from $2x$ to $x2^{\alpha}$ gives all required decompositions, from a $2^{x2^{\alpha} - \ell} = 2^{x2^{\alpha} - 2x} = 2^{j-1 + (x2^{\alpha}- i_1 - 2xj)}$-splittable decomposition into $x2^{x2^{\alpha} - 1 + \alpha - \ell } = x2^{x2^{\alpha} - 1+ \alpha - 2x} =  x2^{i_1 + \alpha+j-2+ (x2^{\alpha} -i_1 - 2xj)}$ cycles when $\ell = 2x$, to a $2^{x2^{\alpha} - \ell} = 2^0 = 2^{j-1 + 0}$-splittable decomposition into 
$x2^{x2^{\alpha} - 1+ \alpha - \ell} = x2^{\alpha - 1} = x2^{i_1 + \alpha + j - 2 + 0}$ cycles when $\ell = x2^{\alpha}$.

Inductive step: Let $n = xy2^{\alpha} = x(2^{i_1} + 2^{i_2} + \cdots + 2^{i_j})2^\alpha$ with $j >1$.  Then $Q_n = Q_{x2^{i_1 + \alpha}} \sq Q_{x(2^{i_2} + \cdots + 2^{i_j})2^\alpha}$, so we seek to apply Lemma~\ref{p7gen} with $G = Q_{x2^{i_1+\alpha}}$ and $G' =  Q_{x(2^{i_2} + \cdots + 2^{i_j})2^\alpha}$.   

By Lemma~\ref{m1m2GENERAL}, $Q_{x2^{i_1 + \alpha}}$ has a $(2^{x2^{i_1 + \alpha}  - \ell + Z},2^{x2^{i_1+ \alpha}-\ell})$-DR-splittable decomposition into $x2^{x2^{i_1+ \alpha} - 1 + i_1 + \alpha - \ell}$ cycles, where $2x\le \ell \le x2^{i_1+ \alpha}$ and $0 \le Z \le \min\{\ell-1,i_1 + \alpha -1\}$.  We will choose $Z=i_1-i_2$ and thus for the remainder of the proof we will enforce the restriction that $2x + (i_1 - i_2)  \le \ell$, simultaneously ensuring that $2x \le \ell$ and $Z=i_1-i_2 \le \ell-1$.

By the inductive hypothesis, $Q_{x(2^{i_2} + \cdots + 2^{i_j})2^\alpha}$ has a $2^{j-2+q}$-splittable decomposition into $x2^{i_2 + \alpha+j-3+q}$ cycles, where $0 \le q \le x(2^{i_2} + \cdots + 2^{i_j})2^\alpha - i_2 - 2x(j - 1)$.

Let $a = 2^{x2^{i_1 + \alpha}+i_1 - i_2 -\ell}$, $b = 2^{x2^{i_1 + \alpha}-\ell}$, $m = x2^{i_2 + \alpha-1}$, and $c =2^{j-2+q}$.  Then $Q_{x2^{i_1 + \alpha}}$ has an $(a,b)$-DR-splittable decomposition into $a m$ cycles, and $Q_{x(2^{i_2} + \cdots + 2^{i_j})2^\alpha}$ has a $c$-splittable decomposition into $c m$ cycles.  Since $\ell > i_1-i_2$, $a =2^{x2^{i_1+ \alpha}+i_1 - i_2 - \ell}$ divides $|V(Q_{x2^{i_1+ \alpha}})| = 2^{x2^{i_1 + \alpha}}$ with even quotient, so the representing sets in the decomposition of $Q_{x2^{i_1 + \alpha}}$ have even cardinality at least two.  Similarly, since 
\begin{align*}
{j-2+q} &\le {j-2 + x(2^{i_2} + \cdots + 2^{i_j})2^\alpha - i_2 - 2x(j - 1)} \\
&\le {x(2^{i_2} + \cdots + 2^{i_j})2^\alpha - (2x-1)(j - 1)-1} \\
&<  {x(2^{i_2} + \cdots + 2^{i_j})2^\alpha}, 
\end{align*}
$c = 2^{j-2+q}$ divides $|V(Q_{x(2^{i_2} + \cdots + 2^{i_j})2^\alpha})| = 2^{x(2^{i_2} + \cdots + 2^{i_j})2^\alpha}$  with even quotient, so the representing sets in the decomposition of $Q_{x(2^{i_2} + \cdots + 2^{i_j})2^\alpha}$ have even cardinality at least two. Thus we can apply  Lemma~\ref{p7gen} with $G = Q_{x2^{i_1 + \alpha}}$ and $G' =  Q_{x(2^{i_2} + \cdots + 2^{i_j})2^\alpha}$ to obtain a $2bc$-splittable decomposition into $2 m a c$ cycles.  Here 
\[2bc = 2\cdot 2^{x2^{i_1 + \alpha} - \ell} \cdot 2^{j-2+q} =2^{x2^{i_1 + \alpha} - \ell + j - 1 + q}\]
 and 
\[2 m a c = 2 \cdot x2^{i_2 + \alpha -1}   \cdot 2^{x2^{i_1+ \alpha}+i_1 - i_2 - \ell }  \cdot 2^{j-2+q} =  x2^{x2^{i_1 + \alpha} + \alpha - \ell + i_1 + j - 2 + q}.\]

Letting the parameters $\ell $ and $q$ range over $2x + i_1-i_2  \le \ell \le x2^{i_1 + \alpha}$ and $0 \le q \le   x(2^{i_2} + \cdots + 2^{i_j})2^\alpha - i_2 - 2x(j - 1)$ gives

%\[  2^{j-1}     \le 2am \le 2^{n- i_1 - 1 - (2x-1)j} \]
%and 
%\[ 2^{\alpha + i_1 + j - 2} \le 2k \ell m \le 2^{n + \alpha + j - 2 - 2xj}\]
%
%i.e.

\[  2^{j-1 + 0}     \le 2bc \le 2^{j-1 + (n- i_1 - 2xj)} \]
and 
\[ x2^{i_1 + \alpha + j - 2 + 0} \le 2mac \le x2^{i_1 + \alpha + j - 2 +(n- i_1 - 2xj)}.\]

The lower bounds are obtained when $\ell = x2^{i_1 + \alpha}$ and $q=0$, while the upper bounds are obtained when $\ell =2x + i_1-i_2$ and $q =   x(2^{i_2} + \cdots + 2^{i_j})2^\alpha - i_2 - 2x(j - 1)$.
\end{proof}

%
%\begin{corollary}\label{upshot}
%Let $n$ be even and $y$ an odd divisor of $n$.  Then there is a decomposition of $Q_n$ into cycles of length $\ell$, where $\ell \geq 2^{n+1}/n$ and $\ell=y2^m$ for some $m$. As a result, if $\ell'\leq 2^n/n$ and $\ell'$ divides the number of edges in $Q_n$, then there is a decomposition of $Q_n$ into paths of length $\ell'$.
%\end{corollary}
%

\begin{proof}[Proof of Corollary~\ref{upshot}]
Let $n=xy2^\alpha$ be even. Let $x$ and $y$ be odd,  with $y = 2^{i_1} + \cdots + 2^{i_j}$, $i_1 > i_2> \cdots >i_j =0$. Setting $q=0$ in Theorem \ref{main} gives a decomposition of $Q_n$ into cycles of length $\ell = y2^{n-i_1-j+1} = y2^{n+1}/2^{i_1+j}$.
Since $i_1$ and $j$ are each at most $\log_2 y$,  we have $2^{i_1+j} \leq y^2$. 
Thus $\ell \geq y 2^{n+1} /y^2  = 2^{n+1}/y \geq 2^{n+1}/n$.
%
%Let $y$ be an odd divisor of $n$ and $\ell' = y2^{m'}$, where $\ell'$ divides the number of edges on $Q_n$ and $\ell' \le 2^n/n$.
%By the discussion above, there is a cycle decomposition of $Q_n$ into cycles of length $\ell = y2^m$, with $\ell \geq  2^{n+1}/n$.  Since $y2^{m'}<y2^m$, $m'<m$ and $\ell'$ divides $\ell$. Thus by splitting each cycle of length $\ell$ into paths of length $\ell'$ we can create a decomposition of $Q_n$ into paths of length $\ell'$.
\end{proof}

\begin{proof}[Proof of Theorem~\ref{paths}]
Let $n$ be even, and suppose $\ell$ divides $|E(Q_n)|=n2^{n-1}$ and $\ell \le 2^n/n$.  Then there is some $m$ such that $\ell = y2^m$, where $y$ is an odd divisor of $n$.  By  Corollary \ref{upshot}, $Q_n$ is decomposable into cycles of length $\ell'= y2^{m'}$, where $\ell' \ge 2^{n+1}/n$.  Note that $\ell < \ell'$, and $\ell$ divides $\ell'$, so the cycles of length $\ell'$ can be divided into paths of length $\ell$, yielding a decomposition of $Q_n$ into paths of length $\ell$.
\end{proof} 

%\begin{proof}[Proof of Theorem~\ref{paths}]
%Let $n$ be even. If $n=2$, then if $\ell \leq 2^n/n$, we have that $\ell \leq 2$. We easily see that $Q_2$ is decomposable into two paths of length $2$ each.
%So, assume that $n\geq 4$.
%
%Let $\ell \leq 2^n/n$, $\ell = y2^m$, where $y$ is odd and $\ell$ divides $|E(Q_n)|=n2^{n-1}$. 
%In particular, we see that $y$ must divide $n$, i.e., $y$ is an odd divisor of $n$. So, $|E(Q_n)|= yx2^a 2^{n-1}$,  where
%$n=xy2^a$, and $x$ is odd.
%
%Consider integer $m'=n-1$. Then $y2^{m'} = y2^{n-1} \geq 2^{n+1}/n$. We have also that  $y2^{m'}$ divides $|E(Q_n)|$.
%Moreover $m<m'=n-1$ while  otherwise $\ell = y2^m\geq y2^{n-1} > 2^n/n$, a contradiction. 
%By  Corollary \ref{upshot}, $Q_n$ is decomposable into copies of a cycle of length $\ell'= y2^{m'}$. Splitting each such cycle into paths of equal length $\ell=y2^m$, we see  that there is a decomposition of $Q_n$ into paths of length $\ell$.
%\end{proof} 

%%%%%%%%%%%%%%%%%%%%%%%%%%%%%%%%
%%%%%%%%%%%%%%%%%%%%%%%%%%%%%%%%
%%%%%%%%%%%%%%%%%%%%%%%%%%%%%%%%
\section{A slight refinement of Theorem \ref{main}}\label{refinement}
%%%%%%%%%%%%%%%%%%%%%%%%%%%%%%%%
%%%%%%%%%%%%%%%%%%%%%%%%%%%%%%%%
%%%%%%%%%%%%%%%%%%%%%%%%%%%%%%%%

Finally, we note that in the case $x=1$ it is possible to make a slightly stronger statement than Theorem~\ref{main}, which we prove here, along with a corollary.

\begin{prop}\label{cbgen}
Let $n$ be even, with binary representation $n = 2^{i_1} + 2^{i_2} + \cdots + 2^{i_j}$, where $i_1 > i_2 > \cdots > i_j$. Then for $0 \le q \le n - i_1 - j$, $Q_n$ has a $2^{j-1+q}$-splittable decomposition into $2^{i_1+j-2+q}$ cycles of the same length.
\end{prop}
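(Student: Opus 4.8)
The plan is to mirror the inductive structure of the proof of Theorem~\ref{main}, but to peel off the binary digits of $n$ \emph{directly} rather than factoring out a common power of $2$; this avoids the ``$2j$'' penalty coming from the $2^\alpha$ attached to every digit and yields the sharper bound $q \le n - i_1 - j$. I induct on $j$, the number of ones in the binary expansion of $n$. Since $n$ is even we have $i_j \ge 1$, so $i_2 \ge 1$ and $i_1 \ge 2$ whenever $j \ge 2$, and each ``tail'' $n - 2^{i_1} = 2^{i_2} + \cdots + 2^{i_j}$ is again even with one fewer binary digit, which is what makes the induction on $j$ go through.

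For the base case $j = 1$ we have $n = 2^{i_1}$. Applying Lemma~\ref{m1m2GENERAL} with $x = 1$ (so its dimension parameter is $i_1$), the cube $Q_{2^{i_1}}$ has a $(2^m, 2^{2^{i_1} - \ell})$-DR-splittable decomposition into cycles of length $2^\ell$ whenever $2 \le \ell \le 2^{i_1}$ and $2^{i_1} - \ell \le m \le \min\{2^{i_1}-1,\, 2^{i_1}-1+i_1-\ell\}$. For a target $q$ with $0 \le q \le n - i_1 - 1$ I take $\ell = 2^{i_1} - q$ and $m = q$; a short check shows $\ell = i_1+1 \ge 2$ and that $m = q$ lies in the allowed range, while an edge count gives exactly $2^{i_1-1+q}$ cycles. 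Forgetting the second splittability index leaves a $2^{q}$-splittable decomposition, which is the required $2^{j-1+q}$-splittable decomposition into $2^{i_1+j-2+q}$ cycles at $j=1$.

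For the inductive step I write $Q_n = Q_{2^{i_1}} \sq Q_{n-2^{i_1}}$ and combine via Lemma~\ref{p7gen}. For the first factor I invoke Lemma~\ref{m1m2GENERAL} to get a $(2^\mu, 2^{2^{i_1}-\ell})$-DR-splittable decomposition of $Q_{2^{i_1}}$ into $2^{\,i_1 + 2^{i_1} - 1 - \ell}$ cycles of length $2^\ell$; for the second factor the induction hypothesis gives a $2^{\,j-2+q'}$-splittable decomposition of $Q_{n-2^{i_1}}$ into $2^{\,i_2 + j - 3 + q'}$ cycles for $0 \le q' \le (n-2^{i_1}) - i_2 - (j-1)$. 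The crux is that Lemma~\ref{p7gen} requires both decompositions to have the \emph{same} number of splitting sets. The second decomposition has $2^{i_2-1}$ splitting sets (a value independent of $q'$), so matching forces the choice $\mu = 2^{i_1} - \ell + (i_1 - i_2)$. Setting $a = 2^\mu$, $b = 2^{2^{i_1}-\ell}$, $c = 2^{\,j-2+q'}$, and common multiplicity $m = 2^{i_2-1}$, Lemma~\ref{p7gen} outputs a $2bc = 2^{\,j-1+q}$-splittable decomposition into $2mac = 2^{\,i_1 + j - 2 + q}$ cycles, where $q = 2^{i_1} - \ell + q'$.

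The main work, and the main obstacle, is bookkeeping the parameter ranges. I must verify that the forced value $\mu = 2^{i_1}-\ell+i_1-i_2$ lies in the interval permitted by Lemma~\ref{m1m2GENERAL}: the lower bound is automatic from $i_1 > i_2$, while the two upper bounds reduce exactly to $\ell \ge i_1 - i_2 + 1$ and to $i_2 \ge 1$ (the latter guaranteed because $n$ is even). I also must check the hypotheses of Lemma~\ref{p7gen}, namely that the representing sets on both sides have even cardinality and that the first factor's cycles have even length $2^\ell \ge 4$; these follow respectively from $\mu \le 2^{i_1}-1$ and from the inductive bound on $q'$. Finally, letting $\ell$ range over $i_1-i_2+1 \le \ell \le 2^{i_1}$ and $q'$ over $0 \le q' \le (n-2^{i_1}) - i_2 - (j-1)$, the quantity $q = 2^{i_1}-\ell+q'$ sweeps every integer from $0$ (at $\ell = 2^{i_1}$, $q'=0$) up to $n - i_1 - j$ (at $\ell = i_1-i_2+1$, $q' = (n-2^{i_1})-i_2-(j-1)$) with no gaps, since decreasing $\ell$ by one or increasing $q'$ by one each increments $q$ by one. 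This yields precisely the claimed range $0 \le q \le n - i_1 - j$ and completes the induction.
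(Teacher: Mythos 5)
Your proposal is correct and is essentially the paper's own proof: the same induction on $j$, the same base case from Lemma~\ref{m1m2GENERAL} (your choice $\ell = 2^{i_1}-q$, $m=q$ is exactly the paper's minimal-$m$ choice reparameterized), and the same inductive step writing $Q_n = Q_{2^{i_1}} \sq Q_{2^{i_2}+\cdots+2^{i_j}}$ and applying Lemma~\ref{p7gen} with $a = 2^{2^{i_1}+i_1-i_2-\ell}$, $b = 2^{2^{i_1}-\ell}$, $m = 2^{i_2-1}$, $c = 2^{j-2+q'}$, including the identical range checks $\ell \ge i_1-i_2+1$, $i_2 \ge 1$, and the sweep $q = 2^{i_1}-\ell+q'$ over $[0, n-i_1-j]$. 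Your observation that the exponent shift $i_1-i_2$ (the paper's $Z$) is \emph{forced} by matching the number of splitting sets on the two factors is a nice way to motivate what the paper simply posits, but it is the same argument.
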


Tables~\ref{tablemain} and \ref{tableprop} in the appendix give some examples of the cycle decompositions produced by Theorem~\ref{main} and Proposition~\ref{cbgen}. Note that even if we were just concerned with path decompositions of the hypercube, Theorem~\ref{main} gives some stronger results than Proposition~\ref{cbgen}.  For example, the cycle decompositions of $Q_{30}$ given by Proposition~\ref{cbgen} has cycles of length at most $15 \cdot 2^{24}$ (in the notation of Proposition~\ref{cbgen}, $i_1=4$ and $j=4$). Dividing these cycles in half gives paths with length $5(3\cdot 2^{23})$.  However as mentioned in the introduction, Theorem~\ref{main} gives cycles of length $5\cdot2^{m}$ for $m$ as large as 27.  Dividing these in half we get a path decomposition of $Q_{30}$ into paths of length $5\cdot2^{26}$, and $2^{26}>3\cdot2^{23}$.  Proposition~\ref{cbgen} gives more decompositions into short cycles in the case $x=1$.

\begin{proof}
We proceed by induction on $j$. 

Base case $j=1$. If $j=1$, then $n=2^{i_1}$, where $i_1 \ge 1$.   Lemma~\ref{m1m2GENERAL} implies that $Q_{2^{i_1}}$ has a $2^{2^{i_1} - \ell }$-splittable decomposition into $2^{2^{i_1} - 1+ i_1 - \ell}$ cycles when $2 \le \ell \le 2^{i_1}$. Assigning $\ell$ all values in the range from $i_1 + 1$ to $2^{i_1}$ gives all required decompositions, from a $2^{2^{i_1} - \ell} = 2^{2^{i_1} - i_1 - 1} = 2^{j-1 + (2^{i_1}-i_1 - j)}$-splittable decomposition into $2^{2^{i_1} - 1 + i_1 - \ell} = 2^{2^{i_1} - 2} =  2^{i_1+j-2+ (2^{i_1}-i_1 - j)}$ cycles when $\ell = {i_1}+1$, to a $2^{2^{i_1} - \ell} = 2^0 = 2^{j-1 + 0}$-splittable decomposition into $2^{2^{i_1} - 1+ i_1 - \ell } = 2^{i_1-1} = 2^{i_1+j-2+0}$ cycles when $\ell = 2^{i_1}$.

Inductive step: Let $n = 2^{i_1} + 2^{i_2} + \cdots + 2^{i_j}$, with $j >1$.  Then $Q_n = Q_{2^{i_1}} \sq Q_{2^{i_2} + \cdots + 2^{i_j}}$, so we seek to apply Lemma~\ref{p7gen} with $G = Q_{2^{i_1}}$ and $G' =  Q_{2^{i_2} + \cdots + 2^{i_j}}$.

By Lemma~\ref{m1m2GENERAL} , $Q_{2^{i_1}}$ has a $(2^{2^{i_1} -\ell + Z},2^{2^{i_1}-\ell})$-DR-splittable  decomposition into $2^{2^{i_1} - 1+ i_1 - \ell }$ cycles, where $2\le \ell \le 2^{i_1}$ and  $0 \le Z \le \min\{\ell - 1,i_1 - 1\}$.  We will choose $Z=i_1-i_2$ and thus for the remainder of the proof we have the restriction $i_1 - i_2 +1 \le \ell$, ensuring $2 \le \ell$ and $Z \le \ell -1$.

By the inductive hypothesis, $Q_{2^{i_2} + \cdots + 2^{i_j}}$ has a $2^{j-2+q}$-splittable decomposition into $2^{i_2+j-3+q}$ cycles, where $0 \le q \le
2^{i_2} + \cdots + 2^{i_j} - i_2 - j + 1$.

Let $a = 2^{2^{i_1}+i_1 - i_2 - \ell}$, $b = 2^{2^{i_1} - \ell}$,  $m = 2^{i_2-1}$, and $c =2^{j-2+q}$.  Then $Q_{2^{i_1}}$ has an $(a,b)$-DR-splittable decomposition into $a m$ cycles, and $Q_{2^{i_2} + \cdots + 2^{i_j}}$ has a $c$-splittable decomposition into $c m$ cycles.  Since $\ell > i_1-i_2$, $a =2^{2^{i_1}+i_1 - i_2 -\ell}$ divides $|V(Q_{2^{i_1}})| = 2^{2^{i_1}}$ with even quotient, so the representing sets in the decomposition of $Q_{2^{i_1}}$ have even cardinality at least two.  Similarly, since $q < 2^{i_2} + \cdots + 2^{i_j} - j + 2$, $c=2^{j-2+q}$ divides $|V(Q_{2^{i_2} + \cdots + 2^{i_j}})| = 2^{2^{i_2} + \cdots + 2^{i_j}}$  with even quotient, so the representing sets in the decomposition of $Q_{2^{i_2} + \cdots + 2^{i_j}}$ have even cardinality at least two. Thus we can apply  Lemma~\ref{p7gen} with $G = Q_{2^{i_1}}$ and $G' =  Q_{2^{i_2} + \cdots + 2^{i_j}}$ to obtain a $2bc$-splittable decomposition into $2mac$ cycles.  Here 
\[2bc = 2\cdot 2^{2^{i_1}-\ell} \cdot 2^{j-2+q} =2^{2^{i_1} - \ell + j - 1 + q}\]
 and 
\[2mac = 2 \cdot 2^{i_2-1}   \cdot 2^{2^{i_1}+i_1 - i_2 -\ell}  \cdot 2^{j-2+q} =  2^{2^{i_1} - \ell + i_1 + j - 2 + q}.\]

Letting the parameters $\ell$ and $q$ range over $i_1-i_2 + 1 \le \ell \le 2^{i_1}$ and $0 \le q \le
2^{i_2} + \cdots + 2^{i_j} - i_2 - j + 1$ gives 

%\[  2^{j-1}     \le 2am \le 2^{n- i_1 -1} \]
%and 
%\[ 2^{i_1 + j - 2} \le 2k \ell m \le 2^{n-2}\]
%
%i.e.

\[  2^{j-1 + 0}     \le 2bc \le 2^{j-1 + (n- i_1 -j)} \]
and 
\[ 2^{i_1 + j - 2 + 0} \le 2mac \le 2^{i_1 + j - 2 +(n- i_1 - j)}.\]

The lower bounds are obtained when $\ell=2^{i_1}$ and $q=0$, while the upper bounds are obtained when $\ell=i_1-i_2 + 1$ and $q =  2^{i_2} + \cdots + 2^{i_j} - i_2 - j + 1$.
\end{proof}

The following corollary shows that we get a decomposition of $Q_n$ into almost all cycles whose length divides $n2^{n-1}$ and is divisible by $2n$.

\begin{corollary}\label{multn}
Let $n$ be even and $\ell = n2^m$.  Then there is a decomposition of $Q_n$ into cycles of length $\ell$ if $2n \le \ell \le 2^n/n$.
\end{corollary}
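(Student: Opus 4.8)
The plan is to obtain the corollary as a direct consequence of Proposition~\ref{cbgen}, by translating the guaranteed number of cycles into the corresponding cycle length and then checking that the resulting range of admissible lengths covers the whole interval $2n \le \ell \le 2^n/n$.

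First I would fix the binary expansion $n = 2^{i_1} + \cdots + 2^{i_j}$ with $i_1 > \cdots > i_j$, and recall that Proposition~\ref{cbgen} supplies, for each $q$ with $0 \le q \le n-i_1-j$, a decomposition of $Q_n$ into $2^{i_1+j-2+q}$ cycles of equal length. Since all cycles have the same length and $|E(Q_n)| = n2^{n-1}$, each cycle has length
\[
\frac{n2^{n-1}}{2^{i_1+j-2+q}} = n\,2^{\,n+1-i_1-j-q}.
\]
Writing $m = n+1-i_1-j-q$, I would observe that as $q$ ranges over $[0,\,n-i_1-j]$ the exponent $m$ ranges over the full integer interval $[1,\,n+1-i_1-j]$, so $Q_n$ decomposes into cycles of length $n2^m$ for every such $m$.

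The remaining step is to verify that the hypothesis $2n \le \ell \le 2^n/n$ on $\ell = n2^m$ confines $m$ to exactly this interval. The lower bound $n2^m \ge 2n$ immediately gives $m \ge 1$. For the upper bound I would rewrite $n2^m \le 2^n/n$ as $n^2 2^m \le 2^n$ and combine it with the elementary estimate $2^{i_1+j-1} \le n^2$; this estimate follows from $n \ge 2^{i_1}$ together with the fact that the number of ones $j$ in the binary expansion of $n$ satisfies $j \le i_1+1$, so that $i_1 + j - 1 \le 2i_1 \le 2\log_2 n$. Then $2^{m+i_1+j-1} \le n^2 2^m \le 2^n$ gives $m \le n+1-i_1-j$, as needed.

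I expect no genuine obstacle here: the argument is essentially arithmetic bookkeeping layered on top of Proposition~\ref{cbgen}. The one point meriting care is the inequality $2^{i_1+j-1} \le n^2$, since this is precisely where the upper cutoff $2^n/n$ (as opposed to a larger threshold) enters; this is the step I would state most carefully.
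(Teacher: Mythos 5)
Your proposal is correct and follows essentially the same route as the paper: both apply Proposition~\ref{cbgen}, convert the count $2^{i_1+j-2+q}$ of cycles into lengths $n2^{\,n+1-i_1-j-q}$, and then use the estimate $2^{i_1+j}\le n^2$ (in your case $2^{i_1+j-1}\le n^2$ via $j\le i_1+1$, which is the same bound the paper invokes from Corollary~\ref{upshot}) to show the interval $2n\le \ell\le 2^n/n$ lies within the achieved range of lengths.
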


\begin{proof}
Let $n = 2^{i_1} +\cdots + 2^{i_j}$ be even, with $i_1 > \cdots > i_j$. By Proposition~\ref{cbgen}, $Q_n$ can be decomposed into  $2^{i_1+j-2+q}$ cycles of the same length, where $0 \le q \le n - i_1 - j$.  Since $Q_n$ has $n2^{n-1}$ edges, this gives cycles of length $n2^{n-1}/2^{i_1+j-2+q} = n2^{n-i_1-j+1-q}$ for $0 \le q \le n - i_1 - j$.  Letting $q$ vary from $0$ to $n - i_1 - j$ gives cycles of length $n2^m$ for all $m$ from 1 (when $q=n-i_1-j$) to ${n-i_1-j+1}$ (when $q=0$). As in the proof of Corollary~\ref{upshot}, $n2^{n-i_1-j+1} \ge 2^{n+1}/n$.
\end{proof}

%\section{Conclusions}\label{conc}
%
%
%In this paper, a method to decompose even hypercubes into cycles or paths of the same lengths is developed using special decompositions of toroidal graphs. 
%It is shown, in particular, that $Q_n$ decomposes into cycles of the same length that are as large as about $2^n/n$, which is a significant improvement over the previously known longest non-trivial lengths with odd divisors of $n^2$ in such a decomposition. Thus the main result of the paper makes a significant step towards resolving a conjecture of Erde.

%In order to extend the range of possible cycle lengths in a decomposition of $Q_n$, it might be possible to generalize the approach of this paper to multidimensional tori.

\section{Acknowledgements}
The second author was supported by a DAAD Award: Research Stays for University Academics and Scientists (Program 57381327) to visit Eberhard Karls University of T{\"u}bingen and Karlsruhe Institute of Technology.  The work of the third author was supported by the grant IBS-R029-C1.  We thank the anonymous referees for a careful reading and constructive suggestions improving the presentation.

%%%%%%%%%%%%%%%%%%%%%%%%%%%%%%%%%%%%%%%%%%%%%%
%%%%%%%%%%%%%%%%%%%%%%%%%%%%%%%%%%%%%%%%%%%%%%
%%%%%%%%%%%%%%%%%%%%%%%%%%%%%%%%%%%%%%%%%%%%%%
%%%%%%%%%%%%%%%%%%%%%%%%%%%%%%%%%%%%%%%%%%%%%%

%%%%%%%%%%%%%%%%%%%%%%%%%%%%%%%%%%%%%%%%%%%%%%
%%%%%%%%%%%%%%%%%%%%%%%%%%%%%%%%%%%%%%%%%%%%%%
%%%%%%%%%%%%%%%%%%%%%%%%%%%%%%%%%%%%%%%%%%%%%%
%%%%%%%%%%%%%%%%%%%%%%%%%%%%%%%%%%%%%%%%%%%%%%

\appendix
\section{Numerical examples}

\begin{table}[H]
\begin{center}
\begin{tabular}{|l |l |l |l |l |l |l |l |l |}
$n$ & $\alpha$ & $x$ & $y$ & $i_1$ & $j$ & $n-i_1 - 2xj$  & Number of cycles & Cycle lengths \\ \hline
14 & 1 & 1 & 7 & 2 & 3 & 6 & $\{2^{q} ~:~ 4 \le q \le 10\}$ &  $\{7 \cdot 2^{m} ~:~ 4 \le m \le 10\}$ \\
14 & 1 & 7 & 1 & 0 & 1 & 0 & $\{7 \cdot 2^{q} ~:~ 0 \le q \le 0\}$ &  $\{2^{14}\}$ \\ \hline
30 & 1 & 1 & 15 & 3 & 4 & 19 & $\{2^{q} ~:~ 6 \le q \le 25\}$ &  $\{15 \cdot 2^{m} ~:~ 5 \le m \le 24\}$ \\
30 & 1 & 3 & 5 & 2 & 2 & 16 & $\{3 \cdot 2^{q} ~:~ 3 \le q \le 19\}$ &  $\{5 \cdot 2^{m} ~:~ 11 \le m \le 27\}$ \\
30 & 1 & 5 & 3 & 1 & 2 & 9 & $\{5 \cdot 2^{q} ~:~ 2 \le q \le 11\}$ &  $\{3 \cdot 2^{m} ~:~ 19 \le m \le 28\}$ \\
30 & 1 & 15 & 1 & 0 & 1 & 0 & $\{15 \cdot 2^{q} ~:~ 0 \le q \le 0\}$ &  $\{2^{30}\}$ \\ \hline
180 & 2 & 1 & 45 & 5 & 4 & 167 & $\{2^{q} ~:~ 9 \le q \le 176\}$ &  $\{45 \cdot 2^{m} ~:~ 5 \le m \le 172\}$ \\
180 & 2 & 3 & 15 & 3 & 4 & 153 & $\{3 \cdot 2^{q} ~:~ 7 \le q \le 160\}$ &  $\{15 \cdot 2^{m} ~:~ 21 \le m \le 174\}$ \\
180 & 2 & 9 & 5 & 2 & 2 & 142 & $\{9 \cdot 2^{q} ~:~ 4 \le q \le 146\}$ &  $\{5 \cdot 2^{m} ~:~ 35 \le m \le 177\}$ \\
180 & 2 & 5 & 9 & 3 & 2 & 157 & $\{5 \cdot 2^{q} ~:~ 5 \le q \le 162\}$ &  $\{9 \cdot 2^{m} ~:~ 19 \le m \le 176\}$ \\
180 & 2 & 15 & 3 & 1 & 2 & 119 & $\{15 \cdot 2^{q} ~:~ 3 \le q \le 122\}$ &  $\{3 \cdot 2^{m} ~:~ 59 \le m \le 178\}$ \\
180 & 2 & 45 & 1 & 0 & 1 & 90 & $\{45 \cdot 2^{q} ~:~  1 \le q \le 91\}$ &  $\{2^{m} ~:~ 90 \le m \le 180\}$ \\
\end{tabular}
\end{center}
\caption{The cycle lengths of the cycle decompositions of $Q_n$ in Theorem~\ref{main}.}\label{tablemain}
\end{table}

\begin{table}[H]
\begin{center}
\begin{tabular}{|l |l |l |l |l |l |}
$n$ & $i_1$ & $j$ & $n-i_1 - j$  & Number of cycles & Cycle lengths \\ \hline
14 & 3 & 3 & 8 & $\{2^{q} ~:~ 4 \le q \le 12\}$ &  $\{7 \cdot 2^{m} ~:~ 2 \le m \le 10\}$ \\ \hline
30 & 4 & 4 & 22 & $\{2^{q} ~:~ 6 \le q \le 28\}$ &  $\{15 \cdot 2^{m} ~:~ 2 \le m \le 24\}$ \\ \hline
180 & 7 & 4 & 169 & $\{2^{q} ~:~ 9 \le q \le 178\}$ &  $\{45 \cdot 2^{m} ~:~ 3 \le m \le 172\}$ \\
\end{tabular}
\end{center}
\caption{The cycle lengths of the cycle decompositions of $Q_n$ in Proposition~\ref{cbgen}.}\label{tableprop}
\end{table}

\end{document}